\documentclass[reqno,11pt]{article}
\usepackage{amsmath, latexsym, amsfonts, amssymb, amsthm, amscd}
\usepackage{mathrsfs}
\usepackage[utf8]{inputenc} 
\usepackage[english]{babel}
\usepackage{hyperref}
\usepackage{graphicx}
\usepackage{array}
\usepackage{booktabs}
\usepackage{mathtools}
\usepackage{subfig}

\usepackage{tabu}

\usepackage{geometry}
\geometry{verbose,a4paper,nohead,includeheadfoot,foot=1.2cm,margin=2.2cm}
\usepackage{url}
\usepackage[backend=bibtex,style=alphabetic,firstinits=true,
            doi=false,isbn=false,url=false]{biblatex}
\addbibresource{biblio.bib}

\newcommand{\blackboard}[1]{\mathbf{#1}} 
\newcommand{\N}{\blackboard{N}}
\newcommand{\PB}{\blackboard{P}}
\newcommand{\EB}{\blackboard{E}}

\newcommand{\R}{\blackboard{R}}

\newcommand{\calig}[1]{\mathcal{#1}}
\newcommand{\A}{\calig{A}}


\renewcommand{\d}[1]{\textup{d} #1}

\newcommand{\convloi}{
    \xrightarrow[\phantom{ a.s }]{\calig{L}}}

\newcommand{\delimleft}[2]{\ifcase #1\or
    \bigl#2\or %
    \Bigl#2\or %
    \biggl#2\or %
    \Biggl#2\or %
    \left#2\fi}
\newcommand{\delimright}[2]{\ifcase #1\or
    \bigr#2\or %
    \Bigr#2\or %
    \biggr#2\or %
    \Biggr#2\or %
    \right#2\fi}
\newcommand{\pa}[2][5]{
    \delimleft{#1}{(} #2 \delimright{#1}{)}}
\newcommand{\br}[2][5]{
    \delimleft{#1}{[} #2 \delimright{#1}{]}}
\newcommand{\ac}[2][1]{
    \delimleft{#1}{\{} #2 \delimright{#1}{\}}}

\newcommand{\abs}[2][1]{
    {\delimleft{#1}{\lvert} #2%
    \delimright{#1}{\rvert}}}
\newcommand{\norm}[2][1]{
    {\delimleft{#1}{\lVert} #2%
    \delimright{#1}{\rVert}}}
\newcommand{\normLp}[3][1]{
    \norm[#1]{#3}_{\scriptscriptstyle #2}}

\newcommand{\normal}[2][5]{
    \calig{N}\! \pa[#1]{#2}}


\newcommand{\ind}[2][1]{
    \boldsymbol{1}_{\ac[#1]{#2}}}

\newcommand{\esp}[2][5]{
    \EB \br[#1]{#2}}

\newcommand{\espc}[3][5]{
    \EB \br[#1]{#2%
    \delimleft{#1}{.} \vphantom{#2}\vphantom{#3}%
    \delimright{#1}{\vert} #3}}


\renewcommand{\le}{\leqslant}
\renewcommand{\ge}{\geqslant}
\newcommand{\ds}{\displaystyle}

\DeclareMathOperator*{\argmin}{argmin}

\newtheorem{Theorem}{Theorem}[section]

\newtheorem{Proposition}[Theorem]{Proposition}

\newtheorem{Lemma}[Theorem]{Lemma}

\newtheorem{Remark}[Theorem]{Remark}

\DeclareMathOperator{\PP}{\blackboard{P}}

\DeclareMathOperator{\Var}{Var} 
\DeclareMathOperator{\RR}{\blackboard{R}}
\DeclareMathOperator{\Cost}{Cost}

\DeclareMathOperator{\effort}{Effort}

\DeclareMathOperator{\W}{\mathbf{W}}
\DeclareMathOperator{\w}{\mathbf{w}}
\DeclareMathOperator{\Hr}{\calig{H}}

\DeclareMathOperator{\Lip}{Lip}

\newcommand{\MLMC}{MLMC}
\newcommand{\MLRR}{ML2R}

\newcommand{\iid}{\emph{i.i.d.}\,}

\newcommand{\calL}{\mathcal{L}}
\newcommand{\calN}{\mathcal{N}}
\newcommand{\hh}{\mathbf{h}}

\hypersetup{pdfborder={0 0 0},colorlinks,citecolor=blue, urlcolor=blue}

\setcounter{secnumdepth}{2}

\frenchspacing

\providecommand{\ceil}[1]{\left \lceil #1 \right \rceil }
\providecommand{\abs}[1]{\left| #1 \right| }

\title{Limit theorems for weighted and regular Multilevel estimators}

\author{Daphn\'e Giorgi\footnote{Laboratoire de Probabilit\'es et Mod\`eles Al\'eatoires, UMR 7599, UPMC Paris 6 (Sorbonne Universit\'e), E-mail: \url{daphne.giorgi@upmc.fr}},
Vincent Lemaire\footnote{Laboratoire de Probabilit\'es et Mod\`eles Al\'eatoires, UMR 7599, UPMC Paris 6 (Sorbonne Universit\'e), 
E-mail: \url{vincent.lemaire@upmc.fr}}, Gilles Pag\`es\footnote{Laboratoire de Probabilit\'es et Mod\`eles Al\'eatoires, UMR 7599, UPMC Paris 6 (Sorbonne Universit\'e), E-mail: \url{gilles.pages@upmc.fr}}
}

\begin{document}

\maketitle
  
\begin{abstract}
	We aim at analyzing in terms of $a.s.$ convergence and weak rate the performances of the Multilevel Monte Carlo estimator (MLMC) introduced in~\cite{Gi08} and of its weighted version, the Multilevel Richardson Romberg estimator (ML2R),  introduced  in~\cite{LePa14}.
	These two estimators permit to compute a very accurate approximation of $I_0 = \esp{Y_0}$ by a Monte Carlo type estimator when the (non-degenerate) random variable $Y_0 \in L^2(\PB)$ cannot be simulated (exactly) at a reasonable computational cost whereas a family of simulatable approximations $(Y_h)_{h \in \Hr}$ is available. 
	We will carry out these investigations in an abstract framework before applying our results, mainly a Strong Law of Large Numbers
	and a Central Limit Theorem, to some typical fields of applications: discretization schemes of diffusions and nested Monte Carlo.
\end{abstract}

\section{Introduction}
In recent years, there has been an increasing interest in Multilevel Monte Carlo approach which delivers remarkable improvements in computational complexity in comparison with standard Monte Carlo in biased framework. We refer the reader to~\cite{Gi15} for a broad outline of the ideas behind the Multilevel Monte Carlo method and various recent generalizations and extensions.
In this paper we establish a Strong Law of Large Numbers and Central Limit Theorem for two kinds of multilevel estimator, 
Multilevel Monte Carlo estimator (MLMC) introduced by Giles in~\cite{Gi08} 
and the  Multilevel Richardson-Romberg (weighted) estimator introduced in~\cite{LePa14}. 
We consider a rather general and in some way abstract framework which will allow us to state these results 
whatever the strong rate parameter is (usually denoted by $\beta$). 
To be more precise we will deal with the versions of these estimators designed to achieve a root mean squared error (RMSE) $\varepsilon$ and establish these results as $\varepsilon \to 0$.
Doing so we will retrieve some recent results established in~\cite{BeKe15} in the framework 
of Euler discretization schemes of Brownian diffusions. 
We will also deduce a SLLN and a CLT for Multilevel nested Monte Carlo, 
which are new results to our knowledge. 
More generally our result apply to any implementation of Multilevel Monte Carlo methods.

Let $(\Omega, \A, \PB)$ be a probability space and $(Y_h)_{h \in \Hr}$ be a family of real-valued random variables in $L^2(\PB)$ associated to $Y_0$ where $\Hr = \left\{\frac{\hh}{n}, n\geq1\right\}$ such that $\lim_{h \to 0} \normLp{2}{Y_h-Y_0} = 0$. In the sequel, a fixed $h \in \Hr$ will be called \emph{bias parameter} (though it appears in a different framework as a discretization parameter).
In what follows we will be interested in the computational cost of the estimators denoted by the $\Cost(\cdot)$ function.
We assume that the simulation of $Y_h$ has an inverse linear complexity \emph{i.e.} $\Cost(Y_h) = h^{-1}$. 
A natural estimator of $I_0=\esp{Y_0}$ is the standard Monte Carlo estimator, which reads for a fixed $h$  
\begin{equation}
	\label{eq::crude-estimator}
	I_h^N = \frac 1 N \sum_{k=1}^N Y_h^{k} \quad \text{with} \quad \Cost(I_h^N) = h^{-1} N,
\end{equation}
where $(Y_h^k)_{k\geq1}$ are \iid copies of $Y_h$ and $N$ is the size of the estimator, which controls the statistical error. 
In order to give the definition of a Multilevel estimator, we consider a \emph{depth} $R \ge 2$ (the finest level of simulation) and a geometric decreasing sequence of bias parameters $h_j=h/n_j$ with $n_j = M^{j-1}$, $j=1,\ldots, R$. 
If $N$ is the estimator size, we consider an allocation policy $q=(q_1,\ldots, q_R)$, such that, at each level $j=1, \ldots, R$, we will simulate $N_j = \ceil{N q_j}$ scenarios (see \eqref{intro:MLMC-estimator} and \eqref{intro:MLRR-estimator} below). 
Thus, we consider $R$ independent copies of the family $Y^{(j)}=(Y_{h}^{(j)})_{h\in\mathcal H}$, $j=1,\ldots, R$, attached to $independent$ random copies $Y_0^{(j)}$ of $Y_0$. Moreover, let $(Y^{(j),k})_{k\geq 1}$ be independent sequences of independent copies of $Y^{(j)}$.
We denote by $I_\pi^N$ an estimator of size $N$ of $I_0$, attached to a simulation parameter $\pi\in\Pi\subset\R^d$.

$\rhd$ A standard Multilevel Monte Carlo (MLMC) estimator, as introduced by Giles in~\cite{Gi08}, reads 

\begin{equation}\label{intro:MLMC-estimator}
	I_\pi^N = I^N_{h,R,q} = \frac{1}{N_1} \sum_{k=1}^{N_1} Y_{h}^{(1),k} + \sum_{j=2}^R \frac{1}{N_j} \sum_{k=1}^{N_j} \pa{Y_{h_j}^{(j),k} - Y_{h_{j-1}}^{(j),k}}
\end{equation}
with $\pi = (h, R, q)$. 

$\rhd$ A Multilevel Richardson Romberg (ML2R) estimator, as introduced in~\cite{LePa14}, is a weighted version of \eqref{intro:MLMC-estimator} which reads
\begin{equation}\label{intro:MLRR-estimator}
	I_\pi^N = I^N_{h,R,q} = \frac{1}{N_1} \sum_{k=1}^{N_1} Y_{h}^{(1),k} + \sum_{j=2}^R \frac{\W_j^R}{N_j} \sum_{k=1}^{N_j} \pa{Y_{h_j}^{(j),k} - Y_{h_{j-1}}^{(j),k}}
\end{equation}
with $\pi = (h,R,q)$. The weights $\left(\W_j^R\right)_{j=1, \ldots, R}$ are explicitly defined as functions 
of the weak error rate $\alpha$ (see equation \eqref{lemaireweak_error} below) and of the refiners $n_j$, $j=0,\ldots, R$ in order to kill the successive bias terms in the weak error expansion (see Section \ref{section::weights} for more details on the weights).
When no ambiguity, we will keep denoting by $I_\pi^N$ estimators for both classes.
We notice that a  Crude Monte Carlo estimator of size $N$ formally appears as an ML2R estimator with $R=1$ and
a MLMC estimator appears as an ML2R estimator in which the weights set $\W_j^R = 1$, $j=1, \ldots, R$. Based on the inverse linear complexity of $Y_h$, it is clear that the simulation cost of both MLMC and ML2R estimators is given by 
$$\Cost(I^N_{h,R,q}) = \frac N h \sum_{j=1}^R q_j (n_{j-1} + n_j)$$
with the convention $n_0=0$. The difference between the cost of MLMC and of ML2R estimator comes from the different choice of the parameters $M$, $R$, $h$, $q$ and $N$.

The calibration of the parameters is the result, a root $M \ge 2$ being fixed, of the minimization of the simulation cost, for a given target Mean Square Error or $L^2$-error $\varepsilon$, namely,
\begin{equation}
 \label{eq::cost_min}
 (\pi(\varepsilon), N(\varepsilon)) = \argmin_{\normLp{2}{I_\pi^N - I_0} \leq \varepsilon} \Cost(I_\pi^N).
\end{equation}
This calibration has been done in~\cite{LePa14} for both estimators MLMC and ML2R under the following assumptions on the sequence $(Y_h)_{h \in \Hr}$. The first one, called \emph{bias error expansion} (or \emph{weak error assumption}), states 
\begin{equation} 
	\tag{$WE_{\alpha, \bar{R}}$} \label{lemaireweak_error}
	\exists \,\alpha > 0, \bar R \ge 1, (c_r)_{1\leq r\leq\bar{R}}, \quad \esp{Y_h} - \esp{Y_0} = \sum_{k=1}^{\bar R} c_k h^{\alpha k} + h^{ \alpha\bar R} \eta_{_{\bar R}}(h), \quad \lim_{h\to 0}\eta_{_{\bar R}}(h)=0.
\end{equation}
The second one, called \emph{strong approximation error assumption}, states 
\begin{equation} 
	\tag{$SE_\beta$} \label{lemairestrong_error}
	\exists\, \beta > 0, V_1 \ge 0,  \quad \normLp{2}{Y_h - Y_0}^2 = \esp{\abs{Y_h - Y_0}^2} \le V_1 h^\beta.
\end{equation}
Note that the strong error assumption can be sometimes replaced by the sharper 
\begin{equation*}
	\exists\, \beta > 0, V_1 \ge 0,  \quad \normLp{2}{Y_h - Y_{h'}}^2 = \esp{\abs{Y_h - Y_{h'}}^2} \le V_1 |h-h'|^\beta, \quad h,h'\in\Hr.
\end{equation*}
From now on, we set $I_\pi^N(\varepsilon) := I_{\pi(\varepsilon)}^{N(\varepsilon)}$, where $\pi(\varepsilon)$ and $N(\varepsilon)$ are closed to solutions of \eqref{eq::cost_min} (see~\cite{LePa14} for the construction of these parameters and Tables \ref{tab:opt_param_MLRR} and \ref{tab:opt_param_MLMC} for the explicit values). As mentioned by Duffie and Glynn in~\cite{DuGl95}, the global cost of the standard Monte Carlo with these \emph{optimal} parameters satisfies $$\Cost\pa{I_\pi^N(\varepsilon)} \lesssim K(\alpha) \varepsilon^{-(2+\frac{1}{\alpha})}$$ where the finite real constant $K(\alpha)$ depends on the structural parameters $\alpha, \Var(Y_0), \hh$ and we recall $f(\varepsilon)\lesssim g(\varepsilon)$ iff $\limsup\limits_{\varepsilon\to0} g(\varepsilon)/f(\varepsilon)\leq1$. 
Giles for MLMC in~\cite{Gi08} and Lemaire and Pag\`es for ML2R in~\cite{LePa14} showed that, using these \emph{optimal} parameters the global cost is upper bounded by a function of $\varepsilon$, depending on the weak error expansion rate $\alpha$ and on the strong error rate $\beta$. More precisely, for both estimators we have 
\begin{equation}
\label{eq::cost_upper_bound}
 \Cost\pa{I_\pi^N(\varepsilon)} \lesssim K(\alpha, \beta, M) v(\varepsilon)
\end{equation}
where the finite real constant $K(\alpha, \beta, M)$ is explicit and differs between MLMC and ML2R (see~\cite{LePa14} for more details). Denoting $v_{\MLMC}$ and $v_{\MLRR}$ the dominated function in \eqref{eq::cost_upper_bound} for the MLMC and ML2R estimator respectively, we obtain two distinct cases. In the case $\beta > 1$ both estimators behaves very well as an unbiased Monte Carlo estimator \emph{i.e.} $v_{\MLMC}(\varepsilon) = v_{\MLRR}(\varepsilon) = \varepsilon^{-2}$. In the case $\beta \le 1$, the ML2R is asymptotically quite better than MLMC since $\lim_{\varepsilon \to 0} \frac{v_{\MLRR}}{v_{\MLMC}} = 0$. More precisely, we have 
\begin{table}[!ht] 
	\centering
\begin{tabu} to 130mm {X[1,c]|X[4,c]|X[4,c]}
        & $v_{\MLMC}(\varepsilon)$ & $v_{\MLRR}(\varepsilon)$ \\
		\midrule
        $\beta = 1$ 
        & $\ds \varepsilon^{-2} \log(1/\varepsilon)^{2}$
        & $\ds \varepsilon^{-2} \log(1/\varepsilon)$
        \\ \midrule
        $\beta < 1$ 
        & $\ds \varepsilon^{-2 - \frac{1-\beta}{\alpha}}$
        & $\ds \varepsilon^{-2} e^{\frac{1-\beta}{\sqrt{\alpha}} \sqrt{2\log(1/\varepsilon)\log(M)}}$
        \\
    \end{tabu}
\end{table}

The aim of this paper is to prove a Strong Law of Large Numbers (SLLN) and a Central Limit Theorem (CLT) for both estimators MLMC and ML2R calibrated using these \emph{optimal} parameters. First notice that as these parameters have been computed under the constraint $\normLp{2}{I_\pi^N(\varepsilon) - I_0} \leq \varepsilon$, the convergence in $L^2$  holds by construction. As a consequence, it is straight forward that, for every sequence $(\varepsilon_k)_{k\geq1}$
such that $\sum_{k\geq1}\varepsilon_k^2<+\infty$,
\begin{equation}
	\label{eq::L2_convergence}
	\sum_{k\geq1} \esp{\left| I_\pi^N(\varepsilon_k) - I_0\right|^2} < +\infty,
\end{equation}
so that $$I_\pi^N(\varepsilon_k) \xrightarrow{a.s.} I_0, \quad \text{as $k \to +\infty$}.$$ 
We will weaken the assumption on the sequence $(\varepsilon_k)_{k\geq1}$ when $Y_h$ has higher finite moments, so we will investigate some $L^p$ criterions for $p\geq2$.
Moreover, provided a sharper strong error assumption and adding some more hy\-po\-the\-sis of uniform integrability, we will show that 
$$\frac{I_\pi^N(\varepsilon) - I_0}{\varepsilon} - m(\varepsilon) \convloi \normal{0, \sigma^2}, \quad \text{as $\varepsilon \to 0$},$$
with $m(\varepsilon) = \frac{\mu(\varepsilon)}{\varepsilon}$ where $\mu(\varepsilon) = \esp{I_\pi^N} - I_0$ is the bias of the estimator, and $m^2+\sigma^2\leq1$, owing to the explicit expression of the constraint 
\begin{equation}
 \label{eq::constraint}
 \normLp{2}{I_\pi^N(\varepsilon) - I_0}^2 =  \mu(\varepsilon)^2 + \Var(I_\pi^N(\varepsilon)) \leq \varepsilon^2.
\end{equation}
In particular we will prove that $\lim_{\varepsilon \to 0} m(\varepsilon) = 0$ for the ML2R estimator. 
More precisely  we will use in the proof the expansion
$$\frac{I_\pi^N(\varepsilon) - I_0}{\varepsilon} =   m(\varepsilon) + \sigma_2 \zeta_2^\varepsilon + \frac{1}{\varepsilon \sqrt{N(\varepsilon)}} \sigma_1 \zeta_1^\varepsilon, \quad \mbox{as } \varepsilon\to0,$$
where $\zeta_1^\varepsilon$ and $\zeta_2^\varepsilon$ are two independent variables such that $(\zeta_1^\varepsilon, \zeta_2^\varepsilon) \convloi \mathcal{N}(0,I_2)$ as $\varepsilon\to0$. We will see that $\zeta_1^\varepsilon$ comes from the coarse level of the estimator, while $\zeta_2^\varepsilon$ derives from the sum of the refined levels.  
When $\beta>1$, $\varepsilon \sqrt{N(\varepsilon)}$ converges to a constant, hence the variance $\sigma^2$ results from  the sum of the variance  of the first coarse level $\sigma_1^2$ and the variance of the sum of the refined fine levels $\sigma_2^2$. When $\beta\in(0,1]$, since $\varepsilon \sqrt{N(\varepsilon)}$ diverges, the contribution to $\sigma^2$ of the coarse level disappears and only the variance of the refined levels contributes to $\sigma^2$. 
More details on $m$ and $\sigma$ will follow in Section \ref{section::main_results}. 

The paper is organized as follows. In Section \ref{section::technical_background} we briefly recall the technical background for Multilevel Monte Carlo estimators. In Section \ref{section::main_results} we stable our main results: a Strong Law of Large Numbers and a Central Limit Theorem in a quite general framework. Section \ref{section:auxiliary} is devoted to the analysis of the asymptotic behaviour of the optimal parameters, to the study of the weights of the ML2R estimator and to the bias of the estimators and its robustness. These are auxiliary results that we need for the proof of the main theorems, which we detail in Section \ref{section::proofs}. In Section \ref{section::applications} we apply these results first to the discretization schemes of Brownian diffusions, where we retrieve recent results by Ben Alaya and Kebaier in~\cite{BeKe15}, and secondly to Nested Monte Carlo. 

\

\noindent {\sc Notations:} 

\noindent $\bullet$  Let $\N^* = \{1,2, \ldots\}$ denote the set of positive integers and $\N=\N^*\cup\{0\}$.
	
\noindent $\bullet$  For every $x\!\in \R_+ = [0, +\infty)$, $\lceil x \rceil$ denotes the unique $n \in \N^*$ satisfying $n-1 < x \le n$.
	
\noindent $\bullet$  If $(a_n)_{n \in \N}$ and $(b_n)_{n \in \N}$ are two sequences of real numbers, $a_n \sim b_n$ if $a_n = \varepsilon_n b_n$ with $\lim_n \varepsilon_n = 1$, $a_n = O(b_n)$ if $(\varepsilon_n)_{n \in \N}$ is bounded and $a_n = o(b_n)$ if $\lim_n \varepsilon_n = 0$.

\noindent $\bullet$ $\Var\pa{X}$ and $\sigma(X)$ denote the variance and the standard deviation of a random variable $X$ respectively.

\section{Brief background on MLMC and ML2R estimators}
\label{section::technical_background}
We follow~\cite{LePa14} and recall briefly the construction of the \emph{optimal} parameters derived from the optimization problem \eqref{eq::cost_min}. The first step is a stratification procedure allowing us to establish the optimal allocation policy $(q_1,\dots,q_R)$ when the other parameters $R, h, M$ are fixed. We focus now on the \emph{effort} of the estimator defined as the product of the cost times the variance \emph{i.e.} $\effort(I_\pi^N) = \Cost(I_\pi^N) \Var(I_\pi^N)$. Introducing the notations  
\begin{equation*}
    \forall j \ge 2, \; Z_j = \pa{\frac{h}{M^{j-1}}}^{-\frac{\beta}{2}} \pa{Y^{(j)}_{\frac{h}{M^{j-1}}} - Y^{(j)}_{\frac{h}{M^{j-2}}}} \quad \text{and} \quad Z_1 = Y^{(1)}_{h},
\end{equation*}
a Multilevel estimator MLMC \eqref{intro:MLMC-estimator} or ML2R \eqref{intro:MLRR-estimator} writes  
\begin{equation*}
    I_\pi^N = \frac{1}{N_1} \sum_{k=1}^{N_1} Z_1^k + \sum_{j=2}^R \frac{1}{N_j} \sum_{k=1}^{N_j} W_j^R \pa{\frac{h}{M^{j-1}}}^{\frac{\beta}{2}} Z_j^k
\end{equation*}
where $W_j^R = 1$ for the MLMC and $W_j^R = \W_j^R$ for the ML2R. By definition and using the approximation $N_j \simeq N q_j$ the effort satisfies  
\begin{equation}
    \effort(I_\pi^N) \simeq \pa{\sum_{j=1}^R q_j \Cost(Z_j)} \pa{\frac{\Var(Y_{h})}{q_1} + \sum_{j=2}^R (W_j^R)^2 \pa{\frac{h}{M^{j-1}}}^\beta \frac{\Var(Z_j)}{q_j}}.
\end{equation} 
Given $R, h, M$, a minimization of $q \in (0,1)^R \mapsto \effort(I_\pi^N)$ on $\ds \ac[2]{q \in (0,1)^R, \, \sum_{j=1}^R q_j = 1}$ gives the solution 
\begin{equation} \label{eq:optimal_qj}
    \begin{cases}
        q_1^* = \mu^* \sqrt{\frac{\Var(Y_{h})}{\Cost(Y_{h})}} \\
        q_j^* = \mu^* \pa{\frac{h}{M^{j-1}}}^{\frac{\beta}{2}} \abs{W_j^R} \sqrt{\frac{\Var(Z_j)}{\Cost(Z_j)}}
    \end{cases}
    \quad \text{with $\mu^*$ such that $\sum_{j=1}^R q_j = 1$},
\end{equation}
using the Schwarz's inequality (see Theorem 3.6 in~\cite{LePa14} for a detailed proof). The strong error assumption~\eqref{lemairestrong_error} allows us to upper bound $\Var(Y_{h})$ and $\Var(Z_j)$ by $\Var(Y_0)\pa[1]{1+ \theta h^{\beta/2}}^2$ with $\theta = \sqrt{\frac{V_1}{\Var(Y_0)}}$ and $V_1 \pa[1]{1+M^{\frac{\beta}{2}}}^2$ respectively. On the other hand, we assume that $\Cost(Z_j) = \frac{(1 + M^{-1})}{h} M^{j-1}$. Plugging theses estimates in \eqref{eq:optimal_qj} we obtain the \emph{optimal} allocation policy used in this paper and given in Tables \ref{tab:opt_param_MLRR} and \ref{tab:opt_param_MLMC}. 
Notice that this particular choice for the $q_j$ is not unique, if we change \eqref{lemairestrong_error} with a different strong error assumption, for example with the sharp version, then we have to replace the upper bound for $\Var(Z_j)$ with $V_1|1-M^{\frac\beta2}|^2$ and a new expression for the $q_j$ follows. In the same spirit, the $\Cost(Z_j)$ can be different and hence have an impact on the $q_j$, see~\cite{GeJoCl15} or the $nested$ Monte Carlo methods as examples of alternative costs. 

The second step is to select $h(\varepsilon) \in \Hr$ and $R(\varepsilon) \ge 2$ to minimize the cost of the \emph{optimally allocated} estimator given a prescribed RMSE $\varepsilon > 0$. To do this we use the weak error assumption \eqref{lemaireweak_error} and we obtain 
\begin{equation*}
    h(\varepsilon) = \hh / \left\lceil \hh (1+2 \alpha )^{\frac{1}{2 \alpha}} |c_1|^{\frac 1{\alpha}} \varepsilon^{-\frac{1}{\alpha }} M^{-(R-1)}\right\rceil 
\end{equation*} 
with $c_1$ the first coefficient in the weak error expansion, for the MLMC estimator. For the ML2R estimator we made the additional assumption $\tilde c_{\infty} = \lim_{R \to \infty} \abs{c_R}^{\frac{1}{R}} \in (0, +\infty)$ and then we obtain  
\begin{equation*}
    h(\varepsilon) = \hh / \left\lceil \hh (1+2 \alpha R)^{\frac{1}{2 \alpha R}} \widetilde c_\infty^{\frac 1{\alpha}} \varepsilon^{-\frac{1}{\alpha R}} M^{-\frac{R-1}{2}}\right\rceil.
\end{equation*}
The depth parameter $R \ge 2$ follows and the choice of $N$ is directly related to the constraint \eqref{eq::constraint}.  

We report in Tables \ref{tab:opt_param_MLRR} and \ref{tab:opt_param_MLMC} the  ML2R and  MLMC 
values for $R(\varepsilon)$, $h(\varepsilon)$, $q(\varepsilon)=(q_1(\varepsilon),\ldots, q_R(\varepsilon))$, $N(\varepsilon)$ computed in~\cite{LePa14} and used throughout this paper. Note that these parameters are used in the web application of the LPMA at the address 
\href{http://simulations.lpma-paris.fr/multilevel/}{http://simulations.lpma-paris.fr/multilevel}. The following constants are used in this paper and in the Tables \ref{tab:opt_param_MLRR} and \ref{tab:opt_param_MLMC}  
$$\theta = \sqrt{\frac{V_1}{\Var(Y_0)}} \quad \text{and} \quad \widetilde{c}_\infty = \lim_{R \to \infty} \abs{c_R}^{\frac 1 R} \in(0,+\infty)$$ 
and 
$$\underline{C}_{M,\beta} = \frac{1+M^{\frac \beta 2}}{\sqrt{1+M^{-1}}} \quad \text{and}  \quad \bar{C}_{M,\beta} = \left( 1+M^{\frac\beta 2}\right)\sqrt{1+M^{-1}}.$$
Notice that $1+M^{\frac\beta2}$ comes from the \eqref{lemairestrong_error} and $\sqrt{1+M^{-1}}$ from the cost, hence the constants $\underline{C}_{M,\beta}$ and $\bar{C}_{M,\beta}$ depend on them, but on anything else. 
\begin{table}[!ht] 
	\centering
	\begin{tabu} to 150mm {X[1,c]|X[10,c]}
		$R(\varepsilon)$ & $\ds \left\lceil \frac 12 + \frac{\log\pa[1]{\widetilde c_\infty^{\frac{1}{\alpha}} \mathbf{h}}}{\log(M)}                                                            
		+ \sqrt{ \pa[3]{\frac 12 + \frac{\log\pa[1]{\widetilde c_\infty^{\frac{1}{\alpha}} \mathbf{h}}}{\log(M)} }^2 + 2 \frac{\log\pa{A/\varepsilon}}{\alpha \log(M)}} \;\right\rceil, \quad A = \sqrt{1+4\alpha} $
		\\ 
		\midrule
		$h(\varepsilon)$ & $\hh / \left\lceil \hh (1+2 \alpha R)^{\frac{1}{2 \alpha R}} \widetilde c_\infty^{\frac 1{\alpha}} \varepsilon^{-\frac{1}{\alpha R}} M^{-\frac{R-1}{2}}\right\rceil$ 
		\\
		\midrule
		$q(\varepsilon)$ &                                                                                                                                                                      
		$ 
		\begin{aligned}
		q_1 & = \mu^* (1 +\theta h^{\frac \beta 2})                                                                                                                                \\
		q_j & = \mu^* \theta h^{\frac \beta 2} \underline{C}_{M,\beta} \abs{\W_j(R,M)} M^{-\frac{1+\beta}{2}(j-1)},\; j = 2,\dots,R;\;  \sum_{1\le j\le R}q_j= 1                   \\
		\end{aligned} 
		$
		\\
		\midrule
		$N(\varepsilon)$ & $\ds \pa{1+\frac{1}{2 \alpha R}} \frac{\ds \Var(Y_0)                                                                                                                 
		\pa{1 + \theta h^{\frac{\beta}{2}} + \theta h^{\frac{\beta}{2}} \bar{C}_{M,\beta} \sum_{j=2}^R \abs{\W_j(R,M)} M^{\frac{1-\beta}{2}(j-1)}}} {\ds \varepsilon^2 \mu^* }$ \\
	\end{tabu}
	\caption{Optimal parameters for the ML2R estimator.}
	\label{tab:opt_param_MLRR}
\end{table}

\begin{table}[!ht]
	\centering
	\begin{tabu} to 150mm {X[1,c]|X[10,c]}
		$R(\varepsilon)$ & $\ds \left\lceil 1 + \frac{\log\pa[1]{\abs{c_1}^{\frac{1}{\alpha}} \mathbf{h}}}{\log(M)} + \frac{\log(A / \varepsilon)}{\alpha \log(M)} \right\rceil, \quad A = \sqrt{1+2 \alpha} $ 
		\\ 
		\midrule
		$h(\varepsilon)$ & $\hh / \left\lceil \hh (1+2 \alpha )^{\frac{1}{2 \alpha}} |c_1|^{\frac 1{\alpha}} \varepsilon^{-\frac{1}{\alpha }} M^{-(R-1)}\right\rceil$                                          
		\\
		\midrule
		$q(\varepsilon)$ &                                                                                                                                                                                     
		$ 
		\begin{aligned}
		q_1 & = \mu^* (1 +\theta h^{\frac \beta 2})                                                                                                                                               \\
		q_j & = \mu^* \theta h^{\frac \beta 2} \underline{C}_{M,\beta} M^{-\frac{1+\beta}{2}(j-1)}                                                                                                
		, \; j = 2,\dots,R;  \sum_{1\le j\le R}q_j= 1 \\
		\end{aligned}
		$
		\\
		\midrule
		$N(\varepsilon)$ & $\ds \pa{1+\frac{1 }{2 \alpha}}\frac{\ds \Var(Y_0)                                                                                                                                  
		\pa{1 + \theta h^{\frac{\beta}{2}} + \theta h^{\frac{\beta}{2}} \bar{C}_{M,\beta} \sum_{j=2}^R  M^{\frac{1-\beta}{2}(j-1)}}} {\ds \varepsilon^2 \mu^* }$ \\
	\end{tabu}
	\caption{Optimal parameters for the MLMC estimator.}
	\label{tab:opt_param_MLMC}
\end{table}

In what follows, we will shorter these notations by setting
\begin{equation}
	\label{eq::R_ML2R}
	R(\varepsilon) = \left\lceil C_R^{(1)} + \sqrt{C_R^{(2)} + \frac{2}{\alpha\log(M)} \log\left( \frac 1\varepsilon\right)} \right\rceil
\end{equation}
with $C_R^{(1)} = \frac 12 + \frac{\log\pa[1]{\widetilde c^{\frac{1}{\alpha}} \mathbf{h}}}{\log(M)}$ and
$C_R^{(2)} = \pa[3]{\frac 12 + \frac{\log\pa[1]{\widetilde c^{\frac{1}{\alpha}} \mathbf{h}}}{\log(M)} }^2 + 2 \frac{\log\pa{A}}{\alpha \log(M)} $
for ML2R and
\begin{equation}
	\label{eq::R_MLMC}
	R(\varepsilon) = \left\lceil C_R^{(1)} + \frac{1}{\alpha\log(M)} \log\left( \frac 1\varepsilon\right) \right\rceil
\end{equation}
with $C_R^{(1)} = 1 + \frac{\log\pa[1]{\abs{c_1}^{\frac{1}{\alpha}} \mathbf{h}}}{\log(M)} + \frac{\log(A )}{\alpha \log(M)} $
for MLMC.

\section{Main results}
\label{section::main_results}
The asymptotic behaviour, as $\varepsilon$ goes to $0$, of the parameters given in Tables \ref{tab:opt_param_MLRR} and \ref{tab:opt_param_MLMC} will be exposed in Section~\ref{section:auxiliary}. We proceed here to the analysis of the asymptotic behaviour of the estimator 
$I_\pi^N(\varepsilon) := I_{\pi(\varepsilon)}^{N(\varepsilon)}$ as $\varepsilon\to0$. 

\subsection{Strong Law of Large Numbers}
We will first prove a Strong Law of Large Numbers, namely 

\begin{Theorem}[Strong Law of Large Numbers]
	\label{th::slln}	
	Let $p\geq 2$. Assume \eqref{lemaireweak_error} for all $\bar{R} \geq 1$ and $Y_0\in L^p$. Assume furthermore the following $L^p$--strong error rate assumption
	\begin{equation}
		\label{hp::lemairestrong_error_p}
		\exists\, \beta > 0, V_1^{(p)} \ge 0,  \quad \normLp{p}{Y_h - Y_0}^p = \esp{\abs{Y_h - Y_0}^p} \le V_1^{(p)} h^{\frac \beta 2 p}, \quad h\in\Hr.
	\end{equation}
	Then, for every sequence of positive real numbers $(\varepsilon_k)_{k\geq1}$ such that $\sum_{k\geq 1} \varepsilon_k^p<+\infty$, both MLMC and ML2R estimators satisfy
	\begin{equation}
		\label{eq::slln}
		I_\pi^N(\varepsilon_k) \stackrel{a.s.} {\longrightarrow} I_0, \quad \text{as $k \to +\infty$}.
	\end{equation}
\end{Theorem}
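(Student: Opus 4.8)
The plan is to upgrade the $L^2$ argument sketched below \eqref{eq::L2_convergence} in the introduction to an $L^p$ one. First I would record the elementary fact that if $\sum_{k\ge1}\esp{\abs{I_\pi^N(\varepsilon_k)-I_0}^p}<+\infty$, then by Tonelli's theorem $\esp{\sum_{k\ge1}\abs{I_\pi^N(\varepsilon_k)-I_0}^p}<+\infty$, so that $\sum_{k\ge1}\abs{I_\pi^N(\varepsilon_k)-I_0}^p<+\infty$ \as, and in particular its general term tends to $0$, which is exactly \eqref{eq::slln}. Since $\sum_k\varepsilon_k^p<+\infty$ forces $\varepsilon_k\to0$, it therefore suffices to establish a uniform bound of the form $\esp{\abs{I_\pi^N(\varepsilon)-I_0}^p}\le C\,\varepsilon^p$ for all sufficiently small $\varepsilon$, where $C$ is a finite constant depending only on $p$ and on the structural parameters $\alpha,\beta,M,\hh,\Var(Y_0),V_1^{(p)}$; the conclusion for both estimators then follows at once from $\sum_k\varepsilon_k^p<+\infty$.

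To obtain this bound I would split $I_\pi^N(\varepsilon)-I_0=\mu(\varepsilon)+\pa{I_\pi^N(\varepsilon)-\esp{I_\pi^N(\varepsilon)}}$ into its deterministic bias $\mu(\varepsilon)=\esp{I_\pi^N(\varepsilon)}-I_0$ and its centered fluctuation. The constraint \eqref{eq::constraint} gives $\abs{\mu(\varepsilon)}\le\varepsilon$ and $\Var(I_\pi^N(\varepsilon))\le\varepsilon^2$ for free, so the bias contributes at most $\varepsilon^p$. For the fluctuation I would write the estimator as a sum over levels, $I_\pi^N-\esp{I_\pi^N}=\sum_{j=1}^R S_j$, where $S_j$ is the centered empirical mean at level $j$ ($S_1$ built from $Y_h$ and, for $j\ge2$, $S_j$ built from $W_j^R\pa{Y_{h_j}-Y_{h_{j-1}}}$), and where crucially the $S_j$ are \emph{independent} across $j$ because the levels use independent copies. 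Applying Rosenthal's inequality to this sum of independent centered variables yields
\begin{equation*}
	\esp{\abs[2]{\textstyle\sum_{j=1}^R S_j}^p}\le C_p\br[2]{\pa[2]{\textstyle\sum_{j=1}^R\esp{S_j^2}}^{p/2}+\textstyle\sum_{j=1}^R\esp{\abs{S_j}^p}}.
\end{equation*}

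The first Rosenthal term is immediate: $\sum_j\esp{S_j^2}=\Var(I_\pi^N)\le\varepsilon^2$, so it is bounded by $C_p\,\varepsilon^p$. The second term is the heart of the matter. For each fixed $j$, $S_j$ is itself an empirical mean of $N_j$ centered \iid copies, so a Marcinkiewicz--Zygmund (or a second application of Rosenthal) inequality bounds $\esp{\abs{S_j}^p}$ by the sum of a variance-type term $\abs{W_j^R}^p N_j^{-p/2}\Var\pa{Y_{h_j}-Y_{h_{j-1}}}^{p/2}$ and a genuinely higher-order term $\abs{W_j^R}^p N_j^{1-p}\normLp{p}{Y_{h_j}-Y_{h_{j-1}}}^p$. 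The variance-type contributions are harmless: writing $a_j=\esp{S_j^2}$, the inequality $\sum_j a_j^{p/2}\le\pa{\sum_j a_j}^{p/2}$ (valid for $p\ge2$ and $a_j\ge0$) together with $\sum_j a_j\le\varepsilon^2$ again gives $O(\varepsilon^p)$.

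It remains to control the higher-order sum $\sum_{j=1}^R\abs{W_j^R}^p N_j^{1-p}\normLp{p}{Y_{h_j}-Y_{h_{j-1}}}^p$, and this is where I expect the main difficulty to lie. To handle it I would bound $\normLp{p}{Y_{h_j}-Y_{h_{j-1}}}^p\lesssim h_{j-1}^{\beta p/2}$ using the triangle inequality and the $L^p$--strong error assumption \eqref{hp::lemairestrong_error_p}, and then substitute the explicit optimal parameters of Tables \ref{tab:opt_param_MLRR} and \ref{tab:opt_param_MLMC} together with their asymptotics established in Section \ref{section:auxiliary}: $N_j=\ceil{Nq_j}$ with $N\gtrsim\varepsilon^{-2}$, the $q_j$ decaying geometrically like $M^{-\frac{1+\beta}{2}(j-1)}$, and the weights $\W_j^R$. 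The exponent $N_j^{1-p}$ with $p\ge2$ (so that $p-1\ge p/2$) supplies the decisive extra decay that makes this term negligible compared with the variance term, but one must verify it level by level: separating the coarse level $j=1$, where $N_1\asymp N\gtrsim\varepsilon^{-2}$ so that $N_1^{1-p}\le\varepsilon^{2(p-1)}\le\varepsilon^p$, from the refined levels, and keeping track of the possible growth in $j$ of the ML2R weights $\W_j^R$ against the geometric decay supplied by $N_j^{1-p}h_{j-1}^{\beta p/2}$. The book-keeping differs in the regimes $\beta>1$ and $\beta\le1$, exactly as in the cost table, but in both cases the monotone growth $R(\varepsilon)\sim\log(1/\varepsilon)$ (or $\sqrt{\log(1/\varepsilon)}$) contributes only sub-polynomial factors that are absorbed into the power of $\varepsilon$, yielding the desired $\esp{\abs{I_\pi^N(\varepsilon)-I_0}^p}\le C\,\varepsilon^p$ and hence the claim for both estimators simultaneously.
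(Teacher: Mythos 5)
Your architecture is essentially the paper's: prove an $L^p$ bound of order $\varepsilon^p$, then conclude by Tonelli/Beppo--Levi along any sequence with $\sum_k\varepsilon_k^p<+\infty$; and your treatment of the refined levels (Rosenthal's inequality, the bound $\normLp{p}{Y_{h_j}-Y_{h_{j-1}}}^p\lesssim M^{-\frac{\beta p}{2}(j-1)}$ from \eqref{hp::lemairestrong_error_p} via Minkowski, the substitution $N_j\ge N q_j$ and the geometric bookkeeping) reproduces the paper's Proposition \ref{prop::L_p_convergence}. The one structural difference is the coarse level: the paper deliberately keeps it \emph{out} of the $L^p$ estimate and kills $\widetilde I^1_{\varepsilon_k}$ by the classical SLLN, using that the samples $(Y_{\hh}^{(1),k})_{k\ge1}$ do not depend on $\varepsilon$, so that $\widetilde I^1_{\varepsilon_k}$ is a subsequence of a fixed empirical-mean sequence. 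Your alternative --- bounding the coarse term by Rosenthal with $N_1\gtrsim\varepsilon^{-2}$, so that $N_1^{-p/2}+N_1^{1-p}\lesssim\varepsilon^p$ --- is correct (it needs $Y_{\hh}\in L^p$, which follows from $Y_0\in L^p$ and \eqref{hp::lemairestrong_error_p}) and is even a little more robust, since it does not presuppose that the coarse-level samples are re-used across different values of $\varepsilon$.

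The genuine gap is in your final absorption step, exactly in the hardest regime $p>\beta+1$ for the MLMC estimator. There the higher-order Rosenthal sum reduces to $N(\varepsilon)^{1-p}M^{\frac{p-(\beta+1)}{2}R(\varepsilon)}$, and for MLMC, by \eqref{eq::R_MLMC}, $R(\varepsilon)\sim\frac{1}{\alpha\log M}\log(1/\varepsilon)$, so that $M^{\frac{p-(\beta+1)}{2}R(\varepsilon)}\asymp\varepsilon^{-\frac{p-(\beta+1)}{2\alpha}}$. This is a \emph{polynomial} factor in $1/\varepsilon$, not a sub-polynomial one; your claim that the log-type growth of $R(\varepsilon)$ ``contributes only sub-polynomial factors that are absorbed'' is simply false for MLMC (it is true for ML2R, where $R(\varepsilon)=O(\sqrt{\log(1/\varepsilon)})$, and only when $p>2$, since at $p=2$ the margin $2(p-1)-p$ vanishes). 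To close this case one must verify the exponent inequality $2(p-1)-\frac{p-(\beta+1)}{2\alpha}\ge p$ when $\beta\ge1$, and, when $\beta<1$, first use the extra polynomial factor $M^{\frac{1-\beta}{2}R(\varepsilon)}$ hidden inside $N(\varepsilon)$ (Lemma \ref{lemma::N_convergence}) and then check $2(p-1)-\frac{\beta}{\alpha}\pa{\frac p2-1}\ge p$. Both inequalities reduce precisely to the consistency relation $2\alpha\ge\beta$ between the weak and strong error rates, which you never invoke, and without which the bound genuinely fails: formally, with $\beta=2$, $\alpha=\frac1{10}$, $p=10$ the exponent falls short of $p$ --- a combination of parameters excluded only by that consistency relation. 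This is exactly the content of the paper's Appendix \ref{appendix::slln_mlmc_computations}. A last, minor point: at $p=2$ your target bound $\esp{\abs{I_\pi^N(\varepsilon)-I_0}^2}\le C\varepsilon^2$ is nothing but the design constraint \eqref{eq::constraint} (this is how the paper argues, via \eqref{eq::L2_convergence}), so that case should be dispatched directly rather than run through the Rosenthal machinery, where the absorption has no room.
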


\subsection{Central Limit Theorems}
A necessary condition for a Central Limit Theorem to hold will be that the ratio between the variance of the estimator and $\varepsilon$ converges as $\varepsilon\to0$. It seems intuitive that \eqref{lemairestrong_error} should be reinforced by a sharper estimate as $h \to 0$. We define 
\begin{equation}
	\label{def::tilde_Z_j}
	Z(h):= \left(\frac{h}{M}\right)^{-\frac \beta 2} \left(Y_{\frac{h}{M}} - Y_h \right) \quad \mbox{ and } \quad Z_j := Z \left( \frac{\hh}{n_{j-1}}\right). 
\end{equation}
A necessary condition to obtain a CLT is to assume that $\pa[1]{Z(h)}_{h \in \Hr}$ is $L^2$--uniformly integrable. We state two results, the first one in the case $\beta > 1$ and the second one in the case $\beta \le 1$.

\subsubsection[Case greater than one]{Case $\beta > 1$}
In this case, note that following~\eqref{lemairestrong_error} we have $\sup_{j \ge 1} \Var(Z_j) \le V_1 \pa{1+M^{\frac{\beta}{2}}}^2$.

\begin{Theorem}[Central Limit Theorem, $\beta > 1$] 
	\label{th::clt_g1}
Assume~\eqref{lemairestrong_error} for $\beta > 1$ and that $\pa[1]{Z(h)}_{h \in \Hr}$ is $L^2$--uniformly integrable. We set 
\begin{equation}
	\label{eq::sigma_clt_g1}
	\sigma_1^2 = \frac1{\Sigma} \frac{\Var(Y_{\hh})}{\Var(Y_0)(1 + \theta \hh^{\frac \beta2})} \quad \text{and} \quad 
	\sigma_2^2 = \frac1{\Sigma} \frac{\mathbf{h}^{\frac \beta 2} \sum_{j\geq 2} M^{\frac{1-\beta}{2}(j-1)} \Var(Z_j)  
}{\sqrt{\Var (Y_0) V_1} \underline{C}_{M,\beta} }
\end{equation}
with $\displaystyle \Sigma = \Sigma(M,\beta,\theta, \hh) = \left[ 1 + \theta \hh^{\frac \beta 2} \left( 1 + \bar{C}_{M,\beta} \frac{M^{\frac{1-\beta}{2}}}{1-M^{\frac{1-\beta}{2}}}\right) \right]$.
\begin{itemize}
    \item[$(a)$] ML2R estimator: Assume \eqref{lemaireweak_error} for all $\bar{R} \geq 1$. 
	Then
    \begin{equation}
        \label{eq::clt_ml2r_g1}
        \frac{I^N_\pi(\varepsilon) - I_0}{\varepsilon} \convloi \normal{0, \sigma_1^2 + \sigma_2^2}, \quad \text{as $\varepsilon \to 0$}.
    \end{equation}
    \item[$(b)$] MLMC estimator: Assume \eqref{lemaireweak_error} for $\bar{R} = 1$.
	Then there exists, for every $\varepsilon > 0$, $m(\varepsilon)$ such that $\frac{M^{-\alpha}}{\sqrt{1+2 \alpha}} \le \abs{m(\varepsilon)} \le \frac{1}{\sqrt{1+2\alpha}}$ and 
    \begin{equation}
        \label{eq::clt_mlmc_g1}
        \frac{I^N_\pi(\varepsilon) - I_0}{\varepsilon} - m(\varepsilon) \convloi \normal{0, \frac{2 \alpha}{2 \alpha + 1} \left(\sigma_1^2 +  \sigma_2^2\right)}, \quad \text{as $\varepsilon \to 0$}.
    \end{equation}
\end{itemize}
\end{Theorem}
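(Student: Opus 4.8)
The plan is to center the estimator, isolating a deterministic (normalized) bias from a centered stochastic fluctuation, and to treat the two pieces by different tools: the bias through the weak-error expansion and the weight construction, and the fluctuation through a Lindeberg--Feller central limit theorem for a triangular array. Since convergence in law as $\varepsilon\to0$ can be checked along an arbitrary sequence $\varepsilon_n\to0$, I would fix such a sequence and write
\begin{equation*}
	\frac{I_\pi^N(\varepsilon)-I_0}{\varepsilon} = m(\varepsilon) + \frac{I_\pi^N(\varepsilon)-\esp{I_\pi^N(\varepsilon)}}{\varepsilon}, \qquad m(\varepsilon)=\frac{\esp{I_\pi^N(\varepsilon)}-I_0}{\varepsilon}.
\end{equation*}
Using the representation with the variables $Z_j$, the centered part is a sum of independent centered summands,
\begin{equation*}
	\frac{1}{\varepsilon N_1}\sum_{k=1}^{N_1}\pa{Z_1^k-\esp{Z_1}} + \sum_{j=2}^R\frac{\W_j^R}{\varepsilon N_j}\pa{\frac{h}{M^{j-1}}}^{\frac\beta2}\sum_{k=1}^{N_j}\pa{Z_j^k-\esp{Z_j}},
\end{equation*}
the first block being built from $Y^{(1)}$ and the second from $Y^{(2)},\dots,Y^{(R)}$, hence independent of each other; this independence is the source of the variables $\zeta_1^\varepsilon,\zeta_2^\varepsilon$ announced in the introduction (for MLMC one simply sets $\W_j^R=1$).

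For the bias, in the ML2R case $(a)$ I would use \eqref{lemaireweak_error} for all $\bar R$ together with the defining property of the weights $\W_j^R$ (Section~\ref{section::weights}): they annihilate the successive bias terms, leaving a residual of order $h^{\alpha R}$, which the parameter asymptotics of Section~\ref{section:auxiliary} (in particular $h(\varepsilon)\to\hh$ and $R(\varepsilon)\to\infty$) render $o(\varepsilon)$, so $m(\varepsilon)\to0$ and no recentering is required. In the MLMC case $(b)$ the telescoping structure gives $\esp{I_\pi^N}=\esp{Y_{h_R}}$ with $h_R=h/M^{R-1}$, so \eqref{lemaireweak_error} with $\bar R=1$ yields a bias $c_1 h_R^\alpha(1+o(1))$; the calibration of $h(\varepsilon),R(\varepsilon)$ together with the ceiling in $h(\varepsilon)$ then pins $\abs{m(\varepsilon)}$ inside $\br{M^{-\alpha}/\sqrt{1+2\alpha},\,1/\sqrt{1+2\alpha}}$ without letting it converge. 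This bias and robustness analysis is exactly what I would import from Section~\ref{section:auxiliary}.

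Next I would prove the variance convergence required by Lindeberg--Feller, namely that
\begin{equation*}
	s_\varepsilon^2 = \frac{\Var(Z_1)}{\varepsilon^2 N_1} + \sum_{j=2}^R\frac{(\W_j^R)^2}{\varepsilon^2 N_j}\pa{\frac{h}{M^{j-1}}}^{\beta}\Var(Z_j)
\end{equation*}
converges. Substituting $N_j=\ceil{Nq_j}$ with the optimal $q_j$ and the value of $N(\varepsilon)$ from Table~\ref{tab:opt_param_MLRR} (resp.~\ref{tab:opt_param_MLMC}), the $q_j$ are designed so that most factors cancel: the coarse term tends to $\sigma_1^2$, while each refined term reduces to a constant multiple of $\hh^{\frac\beta2}M^{\frac{1-\beta}{2}(j-1)}\Var(Z_j)$, whose sum converges because $\beta>1$ makes $M^{\frac{1-\beta}{2}}<1$, and produces $\Sigma$ and $\sigma_2^2$ after using $h\to\hh$, $\abs{\W_j^R}\to1$ (Section~\ref{section:auxiliary}), and $\Var(Y_0)\theta=\sqrt{\Var(Y_0)V_1}$. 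The only structural difference between the two estimators here is the prefactor in $N(\varepsilon)$: $(1+\tfrac{1}{2\alpha R})\to1$ for ML2R gives $s_\varepsilon^2\to\sigma_1^2+\sigma_2^2$, whereas the constant $(1+\tfrac1{2\alpha})$ for MLMC leaves the extra factor $\tfrac{2\alpha}{2\alpha+1}$.

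The hard part will be the Lindeberg condition, and it is precisely where the $L^2$-uniform integrability of $(Z(h))_{h\in\Hr}$ enters. Because $s_\varepsilon^2$ converges, it suffices to show that the truncated second moments of the summands sum to zero for every $\delta>0$. The coarse summands are harmless since their individual magnitude is $O(\varepsilon)$ (as $N_1\asymp\varepsilon^{-2}$) and $Z_1=Y_h$ stays $L^2$-bounded near $\hh$. For the refined levels, the level-$j$ indicator activates only when $\abs{Z_j-\esp{Z_j}}$ exceeds a threshold which, after inserting the optimal $q_j$, is proportional to $\varepsilon N\,M^{-\frac{j-1}{2}}\asymp \varepsilon^{-1}M^{-\frac{j-1}{2}}$ and is therefore increasing as $j$ decreases. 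I would split the sum at a level $J_\varepsilon\to\infty$ chosen slowly enough that $J_\varepsilon\log M=o\pa{\log(1/\varepsilon)}$: for $j\le J_\varepsilon$ the threshold is bounded below by $\varepsilon^{-1}M^{-J_\varepsilon/2}\to\infty$, so uniform integrability makes the truncated moments uniformly small while the prefactors still sum to at most $\sigma_2^2$; for $j>J_\varepsilon$ the truncated moments are bounded by $\Var(Z_j)$ but the prefactors, decaying like $M^{\frac{1-\beta}{2}(j-1)}$, form the tail of a convergent series and vanish as $J_\varepsilon\to\infty$. This handles both estimators uniformly in $\alpha$. With variance convergence and Lindeberg verified, Lindeberg--Feller gives $\tfrac{I_\pi^N-\esp{I_\pi^N}}{\varepsilon}\convloi\normal{0,\sigma_1^2+\sigma_2^2}$ for ML2R and $\normal{0,\tfrac{2\alpha}{2\alpha+1}(\sigma_1^2+\sigma_2^2)}$ for MLMC; the independence of the two blocks identifies the limit as carried by the independent Gaussians $\zeta_1^\varepsilon,\zeta_2^\varepsilon$, and combining with the bias step yields \eqref{eq::clt_ml2r_g1} and \eqref{eq::clt_mlmc_g1}.
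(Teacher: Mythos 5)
Your proposal is correct and shares the paper's overall architecture: the decomposition into a normalized bias plus a centered fluctuation, the bias bounds imported from Section~\ref{section::bias_analysis}, the variance identification through the optimal $q_j$, $N(\varepsilon)$ and Lemmas~\ref{lemma::N_convergence} and~\ref{lemma::W_j_R_limits}, and a Lindeberg-type CLT driven by the $L^2$--uniform integrability of $(Z(h))_{h\in\Hr}$. It differs on two points worth noting. Organizationally, the paper never puts the coarse level into the triangular array: it proves \eqref{eq::clt_1} for the coarse block by the classical CLT (possible because $h(\varepsilon)=\hh$ freezes $Y_{\hh}$), proves \eqref{eq::clt_2} for the refined block by Lindeberg's theorem, and glues the two independent limits together using $\varepsilon\sqrt{N(\varepsilon)}\to\sqrt{C_\beta}$; you run Lindeberg--Feller on the whole array at once, which is equally valid. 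More substantively, your verification of the Lindeberg condition is genuinely different: the paper lower-bounds \emph{all} truncation thresholds by the deepest-level one, $\Theta\,\varepsilon N(\varepsilon)M^{-R(\varepsilon)/2}$, and shows it diverges via \eqref{eq::zero_limit_in_tcl}, an estimate that exploits $R(\varepsilon)=o\pa{\log(1/\varepsilon)}$ for ML2R and the consistency $2\alpha\ge\beta>1$ for MLMC; your two-scale split at $J_\varepsilon$ with $M^{J_\varepsilon}=\varepsilon^{-o(1)}$ handles shallow levels by uniform integrability and deep levels by the tail of the convergent series $\sum_j M^{\frac{1-\beta}{2}(j-1)}$, so it never needs the deepest threshold to diverge and in that sense is more robust: it uses only $\beta>1$, the uniform variance bound from \eqref{lemairestrong_error}, and $\varepsilon^2 N(\varepsilon)\to C_\beta$. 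One technicality you gloss over: replacing $N_j=\lceil Nq_j\rceil$ by $Nq_j$ in $s_\varepsilon^2$ is not free; the paper controls the resulting error in \eqref{eq::ceil}, and that is exactly where $M^{R(\varepsilon)}/(\varepsilon N(\varepsilon))^2\to 0$ enters its proof (alternatively one can check that $Nq_{R(\varepsilon)}\to+\infty$ when $\beta>1$, so the relative error $\sup_j\abs{Nq_j/N_j-1}$ vanishes). This step is routine but must be stated for the variance limit, hence the asymptotic variances $\sigma_1^2$, $\sigma_2^2$, to be rigorously identified.
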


Note that the variance of the first term $Y_\hh$ associated to the coarse level contributes to the asymptotic variance of the estimator throughout $\sigma_1^2$, while the variances of the correcting levels, $\Var(Z_j), j\geq2$, contribute throughout $\sigma_2^2$.
The ML2R estimator is asymptotically unbiased, whereas the MLMC estimator has an \emph{a priori} non-vanishing bias term. This gain on the bias for ML2R is balanced by the variance, which is reduced of a factor $\frac{2\alpha}{1+2\alpha}$ for MLMC. The constraint \eqref{eq::constraint} yields $\sigma_1^2 + \sigma_2^2 \leq1$, which is easy to verify  if we recall that $\Var(Y_\hh)\leq \Var(Y_0)\left(1+\theta \hh^{\frac \beta2}\right)^2$, $\Var(Z_j) \leq V_1 \left( 1+M^{\frac\beta2}\right)^2$ and $\displaystyle m(\varepsilon)^2\leq \frac{1}{1+2\alpha}$. 

\subsubsection[Case less and equal to one]{Case $\beta \in (0,1]$}

In this case, we make the additional sharper assumption that $\ds \lim_{h \to 0} \normLp{2}{Z(h)}^2 = v_\infty(M, \beta)$. This assumption allows us to identify $\ds \lim_{j \to +\infty} \Var(Z_j)$. More precisely, note that owing to the consistence of the strong and weak error $2 \alpha \ge \beta$ and owing to \eqref{lemaireweak_error} we have
\begin{equation*}
    \esp{Z_j} = \pa{\frac{\hh}{n_j}}^{-\frac{\beta}{2}} \esp{Y_{\frac{\hh}{n_j}} - Y_{\frac{\hh}{n_j}}} = c_1 (1 - M^\alpha) \pa{\frac{\hh}{n_j}}^{\alpha-\frac{\beta}{2}} + o\pa{\pa{\frac{\hh}{n_j}}^{\alpha - \frac{\beta}{2}}},
\end{equation*}
so that  
\begin{equation*}
	\Var(Z_j) = \normLp[3]{2}{Z\pa{\frac{\hh}{n_{j-1}}}}^2 - c_1^2 \left(1 - M^\alpha \right)^2 \left( \frac{\hh}{n_j} \right)^{2\alpha - \beta} + o\left(\left( \frac{\hh}{n_j} \right)^{2\alpha - \beta}\right).
\end{equation*}
We conclude that 
\begin{equation}
    \lim_{j \to +\infty} \Var(Z_j) = 
\begin{cases}
    v_\infty(M, \beta) & \text{if $2 \alpha > \beta$}, \\
    v_\infty(M, \beta) - c_1^2 (1 - M^{\frac{\beta}{2}})^2 & \text{if $2 \alpha = \beta$}.
\end{cases}
\end{equation} 
\begin{Theorem}[Central Limit Theorem, $0 < \beta \le 1$] 
	\label{th::clt_le1}
    Assume~\eqref{lemairestrong_error} for $\beta \in (0, 1]$. Assume that $\pa[1]{Z(h)}_{h \in \Hr}$ is $L^2$--uniformly integrable and assume furthermore $\ds \lim_{h \to 0} \normLp{2}{Z(h)}^2 = v_\infty(M, \beta)$. 
We set 
\begin{equation}
	\label{eq::sigma_clt_le1}
	\sigma^2 = \begin{cases} 
        v_\infty(M, \beta) \left(1+M^{\frac{\beta}{2}}\right)^{-2} V_1^{-1} & \text{if $2\alpha > \beta$}, \\
        \pa{v_\infty(M, \beta) - c_1^2 (1-M^{\frac{\beta}{2}})^2} \left(1+M^{\frac{\beta}{2}}\right)^{-2} V_1^{-1} & \text{if $2\alpha = \beta$.}
    \end{cases}      
\end{equation}
\begin{itemize}
    \item[$(a)$] ML2R estimator: Assume \eqref{lemaireweak_error} for all $\bar{R} \geq 1$. 
    Then
    \begin{equation}
		\label{eq::clt_ml2r_le1}
	    \frac{I^N_\pi(\varepsilon) - I_0}{\varepsilon} \convloi \normal{0,\sigma^2}, \quad \text{as $\varepsilon \to 0$}.
	\end{equation}
    \item[$(b)$] MLMC estimator: Assume \eqref{lemaireweak_error} for $\bar{R} = 1$ and that $2 \alpha > \beta$ when $\beta<1$. Then there exists for every $\varepsilon > 0$, $m(\varepsilon)$ such that $\frac{M^{-\alpha}}{\sqrt{1+2 \alpha}} \le \abs{m(\varepsilon)} \le \frac{1}{\sqrt{1+2\alpha}}$ and 
    \begin{equation}
        \label{eq::clt_mlmc_le1}
		\frac{I^N_\pi(\varepsilon) - I_0}{\varepsilon} - m(\varepsilon) \convloi \normal{0, \frac{2 \alpha}{2 \alpha + 1} \sigma^2}, \quad \text{as $\varepsilon \to 0$}.
    \end{equation}
\end{itemize}
\end{Theorem}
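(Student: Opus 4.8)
The plan is to isolate the fluctuations of the estimator around its mean, split them into a coarse-level contribution and a refined-level contribution, and show that when $\beta\le1$ only the latter survives. Writing $\mu(\varepsilon)=\esp{I_\pi^N}-I_0$ and $m(\varepsilon)=\mu(\varepsilon)/\varepsilon$, I would first decompose
$$\frac{I_\pi^N(\varepsilon)-I_0}{\varepsilon}=m(\varepsilon)+\frac{1}{\varepsilon}\pa{I_\pi^N-\esp{I_\pi^N}},$$
and, using the independence of the $R$ copies of the family and of the samples within each level, write the centered part as a sum of independent contributions indexed by the levels $j=1,\dots,R(\varepsilon)$. The level-$1$ (coarse) term is $\frac{1}{\varepsilon N_1}\sum_k(Z_1^k-\esp{Z_1})$, while the refined term gathers the levels $j\ge2$. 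Since these two blocks are built from independent copies they are independent, which yields the announced product structure $(\zeta_1^\varepsilon,\zeta_2^\varepsilon)\convloi\normal{0,I_2}$.

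Next I would compute the variance of each block from the optimal parameters of Table~\ref{tab:opt_param_MLRR} (resp.\ Table~\ref{tab:opt_param_MLMC}). The coarse block has variance $\Var(Y_\hh)/(\varepsilon^2N_1)$; invoking the asymptotics of $N(\varepsilon)$ and $R(\varepsilon)$ from Section~\ref{section:auxiliary} and the fact that $\varepsilon\sqrt{N(\varepsilon)}\to+\infty$ when $\beta\le1$, this term tends to $0$, so the coarse fluctuation drops out — the essential contrast with the case $\beta>1$. The refined block has variance $\frac{1}{\varepsilon^2}\sum_{j\ge2}(W_j^R)^2(\hh/M^{j-1})^\beta\Var(Z_j)/N_j$. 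Here the key observation, specific to $\beta\le1$, is that $M^{\frac{1-\beta}{2}}\ge1$, so this sum is dominated by the finest levels $j\approx R(\varepsilon)\to+\infty$; I would therefore replace $\Var(Z_j)$ by $\lim_{j\to\infty}\Var(Z_j)$, exactly the quantity identified before the statement from $\lim_h\normLp{2}{Z(h)}^2=v_\infty(M,\beta)$ (the two cases $2\alpha>\beta$ and $2\alpha=\beta$ producing the two lines of \eqref{eq::sigma_clt_le1}). A Cesàro/dominated-convergence argument on the geometric weights then gives convergence of the refined variance to $\sigma^2$ (resp.\ $\tfrac{2\alpha}{2\alpha+1}\sigma^2$ for MLMC, the factor arising from the limits of the prefactors $1+\tfrac{1}{2\alpha R}$ and $1+\tfrac{1}{2\alpha}$ of $N(\varepsilon)$, which differ since $R\to+\infty$).

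With the limiting variance in hand, I would apply the Lindeberg Central Limit Theorem for triangular arrays to the refined block. Its individual centered summands $\xi_{j,k}^\varepsilon$ are uniformly asymptotically negligible, and the $L^2$--uniform integrability of $\pa[1]{Z(h)}_{h\in\Hr}$ is precisely what is needed to verify the Lindeberg condition $\sum_{j,k}\esp{|\xi_{j,k}^\varepsilon|^2\ind{|\xi_{j,k}^\varepsilon|>\delta}}\to0$ uniformly over the growing number of levels. Combining the vanishing coarse block, the CLT for the refined block, and the behaviour of $m(\varepsilon)$ — which tends to $0$ for ML2R by the bias robustness of Section~\ref{section:auxiliary} (the weights $\W_j^R$ together with \eqref{lemaireweak_error} for all $\bar R$ cancelling the successive bias terms), while for MLMC the surviving $\bar R=1$ weak-error term keeps $|m(\varepsilon)|$ in $\br{\tfrac{M^{-\alpha}}{\sqrt{1+2\alpha}},\tfrac{1}{\sqrt{1+2\alpha}}}$ — yields \eqref{eq::clt_ml2r_le1} and \eqref{eq::clt_mlmc_le1}.

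The hard part will be the asymptotic analysis of the refined variance sum as $R(\varepsilon)\to+\infty$: because $\beta\le1$ forces the finest levels to dominate rather than giving a convergent series as when $\beta>1$, one must control the tail of the weighted sum, justify replacing each $\Var(Z_j)$ by its limit (which is where the uniform integrability upgrades $\lim_h\normLp{2}{Z(h)}^2=v_\infty$ into uniform control of the $\Var(Z_j)$), and simultaneously check the Lindeberg condition over a number of levels that grows with $1/\varepsilon$. The extra restriction $2\alpha>\beta$ imposed for MLMC when $\beta<1$ is what prevents the bias term from contaminating this variance computation, and I expect it to enter at exactly this point.
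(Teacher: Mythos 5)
Your outline reproduces the paper's own proof in all essentials: the same three-way decomposition into bias, centered coarse block and centered refined block (the paper's $\widetilde I_\varepsilon^1$, $\widetilde I_\varepsilon^2$, which are independent by construction), the observation that $(\varepsilon\sqrt{N(\varepsilon)})^{-1}\to 0$ kills the coarse fluctuation when $\beta\le 1$, Lindeberg's theorem for triangular arrays applied to the refined block with the $L^2$--uniform integrability of $(Z(h))_{h\in\Hr}$ used to check the Lindeberg condition, the Ces\`aro ($\beta=1$) versus finest-levels-dominate ($\beta<1$) asymptotics of the weighted variance sum (this is exactly Lemma~\ref{lemma::W_j_R_limits}~$(d)$ with $\gamma=\frac{1-\beta}{2}\ge 0$ and $v_j$ the normalized $\Var(Z_j)$), the bias dichotomy from Section~\ref{section::bias_analysis}, and the correct origin of the factor $\frac{2\alpha}{2\alpha+1}$ for MLMC.

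However, your closing claim — that the restriction $2\alpha>\beta$ for MLMC when $\beta<1$ ``prevents the bias term from contaminating the variance computation'' — is wrong, and it conceals the one step your plan glosses over. The bias never interacts with the variance: it is a deterministic additive term, removed at the start. Where $2\alpha>\beta$ actually enters is inside the stochastic part, through the estimate \eqref{eq::zero_limit_in_tcl}, i.e. $M^{R(\varepsilon)}/(\varepsilon N(\varepsilon))^2\to 0$, which is needed twice: (i) to control the rounding error made when replacing $N_j=\lceil q_j N\rceil$ by $q_j N$ in the refined variance — a step absent from your outline, cf. \eqref{eq::ceil}; indeed each summand there carries $(q_j N)^{-2}$, and via the expression \eqref{eq::q_j} of $q_j$ together with $\Var(\widetilde Y_j)=(\hh/n_j)^\beta\Var(Z_j)$ the whole correction is $O\bigl(M^{R(\varepsilon)}/(\varepsilon N(\varepsilon))^2\bigr)$; and (ii) to guarantee that the truncation threshold $\varepsilon N(\varepsilon)M^{-R(\varepsilon)/2}$ appearing in the Lindeberg condition \eqref{eq::second_lindeberg_condition} diverges, cf. \eqref{eq::sup_lim_uniform_integrability}. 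For MLMC with $\beta<1$ one has $M^{R(\varepsilon)}\le C\,\varepsilon^{-1/\alpha}$ by \eqref{eq::R_MLMC} and $(\varepsilon N(\varepsilon))^2\sim C'\varepsilon^{-2}M^{(1-\beta)R(\varepsilon)}$ by Lemma~\ref{lemma::N_convergence}, whence $M^{R(\varepsilon)}/(\varepsilon N(\varepsilon))^2\le C''\varepsilon^{2-\beta/\alpha}$: this tends to $0$ precisely when $2\alpha>\beta$ and merely stays bounded in the borderline case $2\alpha=\beta$, which is why that case must be excluded for MLMC, while for ML2R $R(\varepsilon)=O(\sqrt{\log(1/\varepsilon)})$ makes the same quantity vanish with no extra assumption. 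Your plan is therefore sound in structure, but to complete it you must add the ceiling correction and locate the hypothesis $2\alpha>\beta$ there — not at the bias.
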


We will see in the proof that the asymptotic variance corresponds to the variance associated to the correcting levels. 

\subsection{Practitioner's corner}
 In the proof of Theorems \ref{th::clt_g1} and \ref{th::clt_le1} we will obtain the more precise expansion
 \begin{equation*}
  \frac{I^N_\pi(\varepsilon) - I_0}{\varepsilon} = m(\varepsilon)  + \Sigma_2 \zeta_2^\varepsilon + \frac{1}{\varepsilon \sqrt{N(\varepsilon)}} \Sigma_1 \zeta_1^\varepsilon, \quad \mbox{as } \varepsilon\to0,
 \end{equation*}
where $\zeta_1^\varepsilon$ and $\zeta_2^\varepsilon$ are two independent variables such that $(\zeta_1^\varepsilon, \zeta_2^\varepsilon) \convloi \mathcal{N}(0, I_2)$ as $\varepsilon\to0$, and the real values $\Sigma_1$ and $\Sigma_2$ depend on whether we are in the MLMC or in the ML2R case and on the value of $\beta$. Fundamentally $\Sigma_1$ comes from the variance of the first coarse level and $\Sigma_2$ from the sum of variances of the correcting levels. 

When $\beta>1$, we will prove in Lemma~\ref{lemma::N_convergence} that $\varepsilon \sqrt{N(\varepsilon)}$ converges to a constant as $\varepsilon\to0$, hence both the coarse and the refined levels contribute to the asymptotic of the estimator. 

When $\beta\leq1$, we will see that $\left(\varepsilon \sqrt{N(\varepsilon)}\right)^{-1}\to0$ as $\varepsilon\to0$ so that, asymptotically, the variance of the coarse level fades and only the refined levels contribute to the asymptotic variance. Still, it is commonly known in the Multilevel framework that the coarse level is the one with the biggest size (speaking in terms of $N_j$), hence this term is not really negligible. We can go through this contradiction by observing the {\em inverse convergence rate to} $0$,  namely $\varepsilon \sqrt{N(\varepsilon)}$. It is equivalent, up to a constant, to $\sqrt{R(\varepsilon)}$ when $\beta=1$ and $M^{\frac{1-\beta}{4} R(\varepsilon)}$ when $\beta<1$. 

- For ML2R, owing to the expression of $R(\varepsilon)$ given in \eqref{eq::R_ML2R},  $\varepsilon \sqrt{N(\varepsilon)} \sim C \big(\log(1/\varepsilon)\big)^{\frac 14}$ where $C$ is a positive constant  when $\beta=1$ and  $\varepsilon \sqrt{N(\varepsilon)} = o\left(\varepsilon^{-\eta}\right)$ for all $\eta>0$ when $\beta<1$. Hence the convergence rate to 0 of $\left(\varepsilon \sqrt{N(\varepsilon)}\right)^{-1}$ is very slow.
By contrast, $\Sigma_1\gg\Sigma_2$, since $\Sigma_1$ is related to the variance of the coarse level which roughly approximates the value of interest whereas $\Sigma_2$ is related to the variance of the refined levels supposed to be smaller a priori. Hence the product $\left(\varepsilon \sqrt{N(\varepsilon)}\right)^{-1}\Sigma_1$ turns out not to be negligible with respect to $\Sigma_2$ for the    values of the RMSE $\varepsilon $   usually prescribed in applications. 

- For MLMC, we get $\varepsilon \sqrt{N(\varepsilon)} \sim C \sqrt{\log(1/\varepsilon)}$, $C$ positive constant, for $\beta=1$ and $\varepsilon \sqrt{N(\varepsilon)} \sim C' \varepsilon^{-\frac{1-\beta}{4\alpha}}$ for $\beta<1$. Hence, when $\beta>1$, the slow convergence phenomenon is still observed though less   significant. 

\

$\rhd$ \emph{Impact of the weights $\W_j^R$, $j=1,\ldots, R$ on the asymptotic behaviour of the ML2R estimator:} 
When $\beta\geq1$, one observes that neither the rate of convergence nor the asymptotic variance of the estimator depends in any way upon the weights $\W_j^R$, $j=1,\ldots, R$. If $\beta<1$ it depends in a somewhat hidden way through the multiplicative constant of $\varepsilon^{-2} M^{\frac{1-\beta}{2}R(\varepsilon)}$ in the asymptotic of $N(\varepsilon)$ (see Lemma \ref{lemma::N_convergence} for more details).  However, at finite range, it may have an impact on the variance  of the estimator, having however in mind that, by construction, the depth of the ML2R estimator is lower than that of the MLMC which tempers this effect.

\section{Auxiliary results}
\label{section:auxiliary}
This Section contains some useful results for the proof of the Strong Law of Large Numbers and of the Central Limit Theorem. More in detail, we investigate the asymptotic behaviour as $\varepsilon\to0$ of the optimal parameters given in Tables \ref{tab:opt_param_MLRR} and \ref{tab:opt_param_MLMC} and of the bias of the estimators and we analyze the weights of the ML2R estimator.
\subsection{Asymptotic of the bias parameter and of the depth}

An important property of MLMC and ML2R estimators is that $h(\varepsilon)\to \hh$ and $R(\varepsilon)\to\infty$ as $\varepsilon\to0$.
The saturation of the bias parameter $\hh$ is not intuitively obvious, indeed it is well known that $h(\varepsilon) \to0$ as $\varepsilon\to0$ for Crude Monte Carlo estimator.
Still, this is a good property, because  $h = \hh$ is the choice which minimizes the cost of simulation of the variable $Y_h$, which we recall is inverse linear with respect to $h$.
First of all, we retrace the computations that led to the choice of the optimal $h^*(\varepsilon)$ and $R^*(\varepsilon)$, starting from ML2R estimator.
We define  
$$h(\varepsilon,R) = (1+2 \alpha R)^{-\frac{1}{2 \alpha R}} |c_R|^{-\frac 1{\alpha R}} \varepsilon^{\frac{1}{\alpha R}} M^{\frac{R-1}{2}}$$
and we recall that this is the optimized bias found in~\cite{LePa14}  at $R$ fixed.
Since the value of $c_R$ is unknown, it is necessary to make the assumption $|c_R|^{\frac1R}\to\widetilde c$ as $R\to+\infty$
and $|c_R|^{-\frac 1{\alpha R}}$ is replaced by $\widetilde c^{-\frac 1 \alpha}$.
The value of $\widetilde c$ is also unknown and in the simulations we have to take an estimate of 
$\widetilde c$, that we write $\hat c$.
We follow the lines of~\cite{LePa14} and define the polynomial
\begin{equation}
	\label{eq::polynomial}
	P(R) = \frac{R(R-1)}{2}\log(M) -R\log(K) - \frac1\alpha \log\left( \frac{\sqrt{1+4\alpha}}{\varepsilon}\right)
\end{equation}
where $K=\hat c^{\frac1\alpha} \hh$. We set $R_+(\varepsilon)$ the positive zero of $P(R)$. 
The optimal value for the depth of the ML2R estimator is $R^*(\varepsilon) = \lceil R_+(\varepsilon)\rceil$.
We notice that $P(R^*(\varepsilon)) \geq 0$, $R^*(\varepsilon)\to +\infty$ as $\varepsilon\to0$, and $R^*$ is 
increasing in $\hat c$.
We can rewrite $h(\varepsilon,R) = \left( \frac{1+4\alpha}{1+2\alpha R}\right)^{\frac{1}{2\alpha R}} \left( \frac{\hat c}{|c_R|^{\frac 1 R}}\right)^{\frac 1 \alpha} e^{\frac{P(R)}{R}} \hh$.
We notice that $h(\varepsilon,R_+) = \left( \frac{1+4\alpha}{1+2\alpha {R_+}}\right)^{\frac{1}{2\alpha {R_+}}} \left( \frac{\hat c}{|c_{R_+}|^{\frac 1 {R_+}}}\right)^{\frac 1 \alpha} \hh$.
The optimal choice for the bias is the projection of $h(\varepsilon,R^*(\varepsilon))$ on the set $\mathcal H = \left\{ \frac{\mathbf{h}}{n}:  n \in \N \right\}$,
which reads $h^*(\varepsilon) =  \hh\left\lceil \frac{\hh}{h(\varepsilon,R^*(\varepsilon))}\right\rceil^{-1}$.
When we replace $|c_R|^{\frac1R}$ with $\hat c$, we finally obtain
$$h^*(\varepsilon) =  \frac{\hh}{\left\lceil \hh (1+2 \alpha R^*)^{\frac{1}{2 \alpha R^*}} \hat c^{\frac 1{\alpha}} \varepsilon^{-\frac{1}{\alpha R^*}} M^{-\frac{R^*-1}{2}}\right\rceil} = \hh \Biggl\lceil \frac{\hh}{h(\varepsilon,R^*) \left(|c_{R^*}|^{\frac 1{R^*}}\hat c^{-1}\right)^{\frac1\alpha}} \Biggr\rceil^{-1}. $$
Let us analyze the denominator $\displaystyle \frac{\hh}{h(\varepsilon,R^*)\left(|c_{R^*}|^{\frac 1{R^*}}\hat c^{-1}\right)^{\frac1\alpha}} =  \left( \frac{1+4\alpha}{1+2\alpha R^*}\right)^{-\frac{1}{2\alpha R^*}} e^{-\frac{P(R^*)}{R^*}}$. 

Since $P(R^*)\geq 0$ and since for $R$ large enough 
the function $\left( \frac{1+4\alpha}{1+2\alpha R}\right)^{-\frac{1}{2\alpha R}} \nearrow 1$, hence, 
up to reducing $\bar\varepsilon$, 
\begin{equation}
	\label{eq::bias_inequality}
	\left( \frac{1+4\alpha}{1+2\alpha R^*}\right)^{-\frac{1}{2\alpha R^*}}  e^{-\frac{P(R^*)}{R^*}}\leq 1, \quad \mbox{for all} \; \varepsilon\in(0,\bar\varepsilon),
\end{equation}
which yields  $\Biggl\lceil \frac{\hh}{h(\varepsilon,R^*)\left(|c_{R^*}|^{\frac 1{R^*}}\hat c^{-1}\right)^{\frac1\alpha}}\Biggr\rceil = 1$ and $h^*(\varepsilon) = \hh$.

For MLMC we may follow the same reasoning starting from $\ds h(\varepsilon,R) = (1+2\alpha)^{-\frac1{2\alpha}} |c_1|^{-\frac1\alpha} \varepsilon^{\frac1\alpha} M^{R-1}$.
We just showed the following
\begin{Proposition}
	\label{rmk::h_bold}
	There exists $\bar{\varepsilon} > 0$ such that $$\forall \varepsilon \in(0, \bar{\varepsilon}] \quad h^*(\varepsilon) = \hh.$$
\end{Proposition}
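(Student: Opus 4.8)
Since the statement is precisely what the computations preceding it establish, the plan is to package them into a clean argument. Recall that $h^*(\varepsilon)$ is obtained by projecting the continuous optimizer $h(\varepsilon,R^*(\varepsilon))$ onto the grid $\mathcal H=\{\hh/n:n\in\N^*\}$, so that it admits the closed form
\[
h^*(\varepsilon)=\hh\left\lceil \frac{\hh}{h(\varepsilon,R^*)\,(|c_{R^*}|^{1/R^*}\hat c^{-1})^{1/\alpha}}\right\rceil^{-1}.
\]
Consequently $h^*(\varepsilon)=\hh$ is equivalent to the bracketed ceiling being equal to $1$, which in turn holds as soon as its (always positive) argument is at most $1$. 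The whole proof therefore reduces to bounding this argument by $1$ for $\varepsilon$ small.

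The key manipulation is the factorization of $h(\varepsilon,R)$ through the polynomial $P$ of \eqref{eq::polynomial}, which, after cancelling the unknown coefficient $c_{R^*}$ against the estimate $\hat c$, rewrites the argument of the ceiling as the explicit quantity $\bigl(\tfrac{1+4\alpha}{1+2\alpha R^*}\bigr)^{-1/(2\alpha R^*)}e^{-P(R^*)/R^*}$. I would then control its two factors separately. For the exponential factor, $R^*(\varepsilon)=\lceil R_+(\varepsilon)\rceil\ge R_+(\varepsilon)$ and, since $P$ is an upward parabola vanishing at its positive root $R_+$, one gets $P(R^*)\ge 0$, hence $e^{-P(R^*)/R^*}\le 1$. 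For the prefactor, $R^*(\varepsilon)\to\infty$ as $\varepsilon\to 0$, so it tends to $1$. Assembling these two facts gives that the argument is $\le 1$ once $\varepsilon$ is small enough, which is exactly inequality \eqref{eq::bias_inequality}; this forces the ceiling to equal $1$ and yields $h^*(\varepsilon)=\hh$.

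The MLMC case is entirely analogous, starting from $h(\varepsilon,R)=(1+2\alpha)^{-1/(2\alpha)}|c_1|^{-1/\alpha}\varepsilon^{1/\alpha}M^{R-1}$ and its associated degree-two polynomial, and invoking $R^*(\varepsilon)\to\infty$ from \eqref{eq::R_MLMC}. The step I expect to be the genuine obstacle is establishing \eqref{eq::bias_inequality} uniformly on $(0,\bar\varepsilon]$: the prefactor $\bigl(\tfrac{1+4\alpha}{1+2\alpha R^*}\bigr)^{-1/(2\alpha R^*)}$ does converge to $1$, but for $R^*>2$ it does so from above rather than from below, so one cannot merely combine two bounds of the form ``$\le 1$''. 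The careful version must quantify the deviation of the prefactor from $1$, which is of order $\log R^*/R^*$, and check that it is dominated by the lower bound on $P(R^*)/R^*$; the sensitive regime is when $R_+(\varepsilon)$ falls close to an integer, making $P(R^*)$ small, and it is precisely to accommodate this regime that one shrinks $\bar\varepsilon$ and exploits $R^*\to\infty$.
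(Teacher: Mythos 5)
You have reconstructed the paper's reduction faithfully, and your diagnosis of where the difficulty sits is actually sharper than the paper's own one-line justification: the prefactor $\left(\frac{1+4\alpha}{1+2\alpha R^*}\right)^{-\frac{1}{2\alpha R^*}}=\left(\frac{1+2\alpha R^*}{1+4\alpha}\right)^{\frac{1}{2\alpha R^*}}$ is indeed $>1$ for every $R^*>2$ and tends to $1$ \emph{from above}, whereas the paper asserts it ``$\nearrow 1$'' (hence $\le 1$) and obtains \eqref{eq::bias_inequality} by multiplying two bounds of the form ``$\le 1$''. Up to that point you are running the same proof as the paper, with the correct sign of the deviation.

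However, the repair you sketch cannot be carried out, so your proof has a genuine gap --- and it is the same gap that the paper's argument hides. Writing $g(R)=\left(\frac{1+2\alpha R}{1+4\alpha}\right)^{\frac{1}{2\alpha R}}$, inequality \eqref{eq::bias_inequality} is equivalent to $P(R^*)\ge \frac{1}{2\alpha}\log\frac{1+2\alpha R^*}{1+4\alpha}$, whose right-hand side is positive and grows like $\frac{\log R^*}{2\alpha}$. But the only available lower bound on $P(R^*)$ is $0$, and it is attained for arbitrarily small $\varepsilon$: since $R_+(\varepsilon)$ increases continuously to $+\infty$ as $\varepsilon\downarrow 0$, for every large integer $n$ there is $\varepsilon_n\downarrow 0$ with $R_+(\varepsilon_n)=n$, hence $R^*(\varepsilon_n)=n$ and $P(R^*(\varepsilon_n))=0$. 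At such $\varepsilon_n$ the argument of the ceiling equals $g(n)\in(1,e)$, the ceiling is at least $2$, and the formula gives $h^*(\varepsilon_n)\le \hh/2\neq\hh$ (for instance $\alpha=1$, $M=2$, $\hh=\hat c=1$, $\varepsilon=\sqrt{5}\cdot 2^{-10}$ gives ceiling argument $(11/5)^{1/10}\approx 1.08$). Shrinking $\bar\varepsilon$ cannot exclude these points, since they accumulate at $0$, and ``$R^*\to\infty$'' works \emph{against} you, because the required bound $\frac{1}{2\alpha}\log\frac{1+2\alpha R^*}{1+4\alpha}$ diverges. So the domination you call for --- deviation of order $\log R^*/R^*$ controlled by a lower bound on $P(R^*)/R^*$ --- fails whenever $\lceil R_+(\varepsilon)\rceil-R_+(\varepsilon)$ is of smaller order than $\log R_+/(R_+\log M)$; the conclusion $h^*(\varepsilon)=\hh$ can only hold outside such an exceptional set of accuracies accumulating at $0$. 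Note finally that the MLMC case, which you dismiss as ``entirely analogous,'' is in fact the unproblematic one: there the ceiling's argument is exactly $M^{-(R^*-R_+)}\le 1$, with no prefactor, so for MLMC the argument (yours and the paper's) goes through verbatim.
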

In what follows, we will always assume that $\varepsilon \in (0, \bar{\varepsilon}]$ and $h^*(\varepsilon)=\hh$.
This threshold $\bar \varepsilon$ can be reduced in what follows line to line.
	
As  $\varepsilon \to 0$, $R=R^*(\varepsilon) \to +\infty$ at the rate 
$\sqrt{\frac{2}{\alpha \log(M)}\log\left(\frac 1 \varepsilon\right)} $ in the ML2R case and 
$\frac{1}{\alpha \log(M)}\log\left(\frac 1 \varepsilon\right)$ in the MLMC case.

\subsection{Asymptotic of the bias and robustness}
\label{section::bias_analysis}
As part of a Central Limit Theorem, we will be faced to the quantity $\displaystyle \frac{\mu(\hh, R(\varepsilon), M)}{\varepsilon}$, where $\mu(\hh, R(\varepsilon), M) = \esp{I_\pi^N(\varepsilon)} - I_0$ is the bias of the estimator.
This leads to analyze carefully its asymptotic behavior as $\varepsilon \to 0$.
Under the \eqref{lemaireweak_error} assumption, the bias of a Crude Monte Carlo estimator reads 

$$\mu(h) = c_1 h^\alpha (1+\eta_1(h)), \quad \lim_{h\to0} \eta_1(h) = 0. $$
The bias of Multilevel estimators is dramatically reduced 
compared to the Crude Monte Carlo, more precisely the following Proposition is proved in~\cite{LePa14}:

\begin{Proposition} 
	\label{prop::bias}
	\hspace{2em}
	\begin{itemize}
		\item[(a)] MLMC: Assume \eqref{lemaireweak_error} with $\bar{R} = 1$.
		      \begin{equation}
		      	\label{eq::bias_MLMC}
		      	\mu(h, R, M) = c_1 \left( \frac{h}{M^{R-1}}\right)^\alpha\left(1+\eta_1\left(\frac{h}{M^{R-1}}\right)\right) 
		      \end{equation}
		      with $\lim_{h\to0}\eta_1(h) = 0$. 
		      
		\item[(b)] ML2R: Assume \eqref{lemaireweak_error} for all $\bar{R} \geq1$.
		      \begin{equation}
		      	\label{eq::bias_ML2R}
		      	\mu(h, R, M) = (-1)^{R-1}c_R \left( \frac{h^R}{M^{\frac{R(R-1)}{2}}}\right)^\alpha(1+\eta_{R, \underline{n}}(h))
		      \end{equation}
		      where $\eta_{R, \underline{n}}(h) = (-1)^{R-1}M^{\alpha\frac{R(R-1)}{2}} \sum_{r=1}^R \frac{w_r}{n_r^{\alpha R}}\eta_R(\frac{h}{n_R})$ with $\lim_{h\to0}\eta_R(h) = 0$.
	\end{itemize}
\end{Proposition}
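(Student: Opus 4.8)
The plan is to treat the two estimators separately, observing that in both cases the bias depends only on the level expectations $\esp{Y_{h_j}}$, since taking expectations in \eqref{intro:MLMC-estimator} and \eqref{intro:MLRR-estimator} removes all the Monte Carlo averaging. For MLMC I would first telescope: because each level is built from copies distributed as $Y_{h_j}$, the expectation of \eqref{intro:MLMC-estimator} satisfies
\begin{equation*}
	\esp{I_\pi^N} = \esp{Y_{h_1}} + \sum_{j=2}^R \pa{\esp{Y_{h_j}} - \esp{Y_{h_{j-1}}}} = \esp{Y_{h_R}},
\end{equation*}
with $h_1 = h$ and $h_R = h/M^{R-1}$. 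Hence $\mu(h,R,M) = \esp{Y_{h_R}} - I_0$, and applying \eqref{lemaireweak_error} with $\bar R = 1$ at the single parameter $h_R$ yields \eqref{eq::bias_MLMC} directly, after absorbing the $\eta_1$ contribution into the relabelled remainder. This part is routine.

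For ML2R the weighted telescoping sum reorganizes, by Abel summation, into a plain weighted combination of the level expectations,
\begin{equation*}
	\esp{I_\pi^N} = \sum_{j=1}^R w_j\, \esp{Y_{h_j}}, \qquad \sum_{j=1}^R w_j = 1,
\end{equation*}
with $w_1 = 1 - \W_2^R$, $w_j = \W_j^R - \W_{j+1}^R$ for $2 \le j \le R-1$, and $w_R = \W_R^R$; the last identity holds automatically since the coefficients telescope. The defining property of the weights (Section \ref{section::weights}) is that the $\W_j^R$ are chosen so that, writing $x_j = M^{-(j-1)\alpha}$, the $w_j$ additionally satisfy $\sum_{j=1}^R w_j x_j^{k} = 0$ for $k = 1, \dots, R-1$; equivalently, the $w_j$ are the extrapolation-to-zero weights of the Vandermonde system with nodes $x_1, \dots, x_R$. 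I would then insert \eqref{lemaireweak_error} to order $\bar R = R$, namely $\esp{Y_{h_j}} - I_0 = \sum_{k=1}^R c_k h^{\alpha k} x_j^{k} + h^{\alpha R} x_j^{R} \eta_R(h_j)$, and use the $R-1$ vanishing conditions together with $\sum_j w_j = 1$ to annihilate every term of order $k = 0, \dots, R-1$, leaving
\begin{equation*}
	\mu(h,R,M) = c_R h^{\alpha R} \pa{\sum_{j=1}^R w_j x_j^{R}} + h^{\alpha R}\sum_{j=1}^R w_j x_j^{R}\, \eta_R(h_j).
\end{equation*}

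The crux is to evaluate $\sum_j w_j x_j^{R}$ in closed form. Here I would use the interpolation reading of the weights: since $\sum_j w_j P(x_j) = P(0)$ for every polynomial $P$ of degree $\le R-1$, the sum $\sum_j w_j x_j^{R}$ equals $L(0)$, where $L$ is the degree-$(R-1)$ interpolant of $x \mapsto x^R$ at the nodes $x_1, \dots, x_R$. The monic remainder identity $x^R - L(x) = \prod_{i=1}^R (x - x_i)$, evaluated at $x = 0$, then gives $\sum_j w_j x_j^{R} = (-1)^{R-1}\prod_{i=1}^R x_i = (-1)^{R-1} M^{-\alpha R(R-1)/2}$. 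Substituting back produces the announced leading factor $(-1)^{R-1} c_R \pa{h^R / M^{R(R-1)/2}}^\alpha$ and gathers the surviving remainder into $\eta_{R,\underline n}(h)$ of the form in \eqref{eq::bias_ML2R}; since $R$ is fixed as $h \to 0$, the weights and refiners are constants and each $\eta_R(h_j) \to 0$, so $\eta_{R,\underline n}(h) \to 0$.

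I expect the main obstacle to be the weight bookkeeping rather than the analysis: one must verify that the $\W_j^R$ of Section \ref{section::weights} really do force the effective weights $w_j$ to solve the Vandermonde system, and then carry the exact signs and powers of $M$ through the interpolation-remainder computation so that the leading constant matches $(-1)^{R-1} c_R M^{-\alpha R(R-1)/2}$ precisely rather than merely up to sign. By contrast, the cancellation of the low-order terms and the vanishing of the remainder as $h\to0$ are comparatively direct.
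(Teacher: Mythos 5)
Your proposal is correct, and it is worth noting that the paper itself does not actually prove Proposition \ref{prop::bias}: it defers entirely to~\cite{LePa14}. Your argument is therefore a self-contained substitute, and it follows the same broad skeleton as the cited work (reduce the bias to $\sum_{r=1}^R \w_r\,\esp{Y_{h/n_r}} - I_0$ via telescoping/Abel summation, then kill the terms $k=1,\dots,R-1$ using the Vandermonde moment conditions), but it diverges at the crux. In~\cite{LePa14} the key identity
\begin{equation*}
\sum_{r=1}^R \w_r\, n_r^{-\alpha R} = (-1)^{R-1} \prod_{r=1}^R n_r^{-\alpha} = (-1)^{R-1} M^{-\alpha \frac{R(R-1)}{2}}
\end{equation*}
is extracted from the explicit closed form of the weights (the Cramer's-rule representation $\w_\ell = a_\ell b_{R-\ell}$ recalled in Section \ref{section::weights}), whereas you obtain it from the Lagrange interpolation remainder identity $x^R - L(x) = \prod_{i=1}^R (x - x_i)$ evaluated at $x=0$, using only that the $\w_r$ reproduce $P(0)$ for polynomials of degree at most $R-1$. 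Your route is arguably cleaner: it never needs the explicit weight formulas, only the defining Vandermonde system, and it makes the sign $(-1)^{R-1}$ and the power $M^{-\alpha R(R-1)/2}$ transparent. Two cosmetic points: your remainder naturally comes out as $\eta_{R,\underline{n}}(h) = \frac{(-1)^{R-1}M^{\alpha\frac{R(R-1)}{2}}}{c_R}\sum_{r=1}^R \frac{\w_r}{n_r^{\alpha R}}\,\eta_R\!\pa[1]{\frac{h}{n_r}}$, i.e.\ with a $1/c_R$ normalization and with $\eta_R(h/n_r)$ inside the sum, whereas the statement as printed omits the $1/c_R$ and writes $\eta_R(h/n_R)$ — these are typo-level discrepancies in the paper's formula, and your form is the consistent one; in either case the remainder is a finite sum of terms vanishing as $h\to0$ at fixed $R$, so the conclusion stands.
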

We notice that the ML2R estimator requires and takes full advantage of a higher order of the expansion of the bias error 
\eqref{lemaireweak_error}, 
whereas the MLMC estimator only needs a first order expansion.
As the computations were made under the constraint $\normLp{2}{I_\pi^N - I_0} \leq \varepsilon$, 
we have clearly that $\frac{|\mu(\hh, R(\varepsilon), M)|}{\varepsilon}\leq 1$. 
We focus our attention on the constants $\widetilde c_\infty$ and $c_1$, which a priori we do not know and that we replace in the simulations 
by $\hat c_\infty = \hat c_1 = 1$.
If we plug the values of $h(\varepsilon) = \hh$ and $R(\varepsilon)$ in the
formulas for the bias, owing to \eqref{eq::polynomial} and \eqref{eq::bias_inequality}  we get, for ML2R,
\begin{align*}
	|\mu(\hh,R(\varepsilon),M)| & = 	|c_{R(\varepsilon)}| \left( \frac{\hh^{R(\varepsilon)}}{M^{R(\varepsilon)\frac{R(\varepsilon)-1}{2}}}\right)^\alpha  =  |c_{R(\varepsilon)}| \hh^{\alpha R(\varepsilon)} \frac{1}{e^{\alpha P(R(\varepsilon))} K^{\alpha R(\varepsilon)}} \frac{\varepsilon}{\sqrt{1+4\alpha}} \\
	& =  \frac{|c_{R(\varepsilon)}|}{\hat c_\infty ^{R(\varepsilon)}} \frac{1}{e^{\alpha P(R(\varepsilon))}} \frac{\varepsilon}{\sqrt{1+4\alpha}}  \leq \frac{|c_{R(\varepsilon)}|}{\hat c_\infty ^{R(\varepsilon)}} \frac{1}{\sqrt{1+2\alpha R(\varepsilon)}} \;\varepsilon                                 
\end{align*}
and, for MLMC,
\begin{equation}
	\left| \frac{c_1}{\hat c_1}\right| \frac{M^{-\alpha}}{\sqrt{1+2\alpha}}\varepsilon < |\mu(\hh,R(\varepsilon),M)| \leq \left| \frac{c_1}{\hat c_1}\right| \frac{1}{\sqrt{1+2\alpha}}\varepsilon.
\end{equation}
We set $m(\varepsilon) := \frac{|\mu(\hh,R(\varepsilon),M)|}{\varepsilon}$. Hence, when taking the true values $\hat c_\infty = \widetilde c_\infty$ and $\hat c_1 = c_1$, we get
\begin{equation}
	\label{eq::bias_on_epsilon_lim}
	\begin{cases}
		\displaystyle \lim_{\varepsilon\to0} m(\varepsilon) = 0 						& \mbox{ for } ML2R, \\
		\frac{M^{-\alpha}}{\sqrt{1+2\alpha}} < \displaystyle  \lim_{\varepsilon\to0} m(\varepsilon) \leq \frac{1}{\sqrt{1+2\alpha}} 	& \mbox{ for } MLMC. 
	\end{cases}
\end{equation}
For ML2R estimators, if $c_R$ has a polynomial growth depending on $R$ $|c_R|^{\frac1R}\to1$ 
and $\hat c_\infty = 1$ corresponds to the exact value of $\widetilde c_\infty$. If the growth of $c_R$  is less than polynomial, 
the convergence to 0 in \eqref{eq::bias_on_epsilon_lim} still holds. 
The only uncertain case is when the growth of $c_R$ is faster than polynomial. 
Then, if $\hat c_\infty \geq |c_R|^{\frac1R}$, $|\mu(\varepsilon)|/\varepsilon$ goes to 0 faster than $\frac{1}{\sqrt{1+2\alpha R(\varepsilon)}}$, 
but if we had taken $\hat c_\infty<1$, we would have obtained $\lim_{R\to+\infty} \frac{|c_{R(\varepsilon)}|}{\hat c_\infty ^{R(\varepsilon)}} =+\infty$,
hence $\hat c_\infty<1$ is definitely not a good choice.
In conclusion, whenever the growth of $c_R$ is at most polynomial, $\hat c_\infty = 1$ remains a good choice. 
When the growth is faster than polynomial it is better to overestimate $\hat c_\infty$ than to underestimate it.
The remarkable fact is that, when we choose $\hat c_\infty$, we are not forced to have a very precise idea of the expression of $c_R$, 
but only of its growth rate.
The choice of $\hat c_1$ for MLMC estimator is less robust, since it is obvious that if we overestimate $c_1$ the inequality
$|\mu(\varepsilon)|/\varepsilon\leq 1/\sqrt{1+2\alpha}$ still holds, but if we underestimate it we eventually
may not have $\frac{|\mu(\varepsilon)|}{\varepsilon}\leq 1$ as expected. Hence the bias for the MLMC estimator 
is very connected to an accurate enough estimation of $c_1$. 

In Figures \ref{fig::c1_estimated_eu} and \ref{fig::c1_estimated_mil} we show the values of $|c_1|$ estimated with the formula
$$c_1 = \left( \frac h2-h \right)^{-1}\pa{\esp{Y_{\frac h2}}-\esp{Y_h}}$$
compared to the value plugged in the simulations $\hat c_1 = 1$, for a Call option in a Black-Scholes model with $X_0=100$, $K=80$, $T=1$, $\sigma=0.4$ and 
making the interest rate vary as follows $r=0.01,0.1,0.2,\ldots, 0.9,1$. 
We simulated $\esp{Y_h}$, with $h=T/20$, using an Euler and a Milstein discretization scheme
and making a Crude Monte Carlo simulation of size $N=10^8$.

In Figures \ref{fig::empirical_bias_eu} and \ref{fig::empirical_bias_mil}
we show the absolute value of the empirical bias for different values of $r$. 
In the simulations, we fixed $\hat c_1 = 1$ and $\hat c_\infty = 1$.
We can observe that when $|c_1|$ is underestimated, the bias for MLMC and 
Crude Monte Carlo estimators do not satisfy the constraint $|\mu(\varepsilon)|\leq \varepsilon$, whereas
the ML2R estimator appears to be less sensible to the estimation of $\tilde c$.

\begin{figure}[htp]
  \centering
  \subfloat[Euler scheme  ($\alpha=1,\beta=1$)]{\label{fig::c1_estimated_eu}\includegraphics[width=0.5\linewidth]{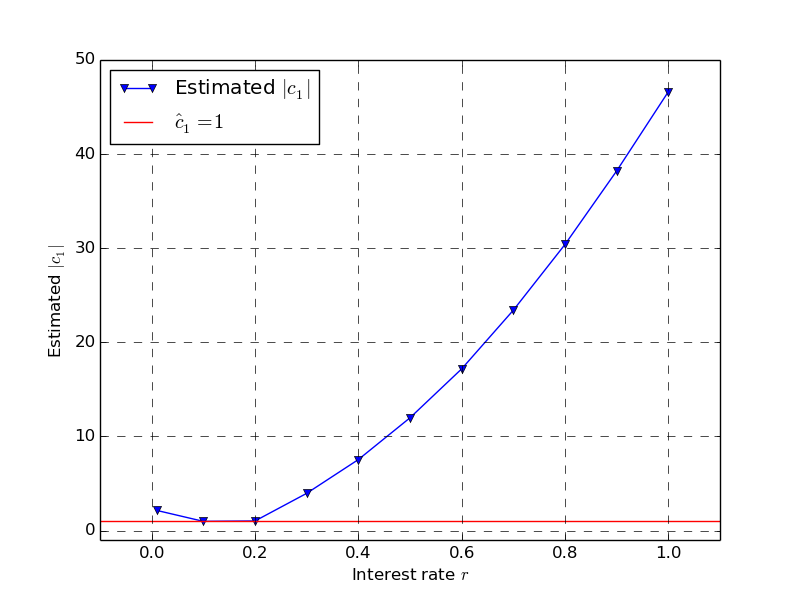} }
  \subfloat[Milstein scheme  ($\alpha=1,\beta=2$).]{\label{fig::c1_estimated_mil}\includegraphics[width=0.5\linewidth]{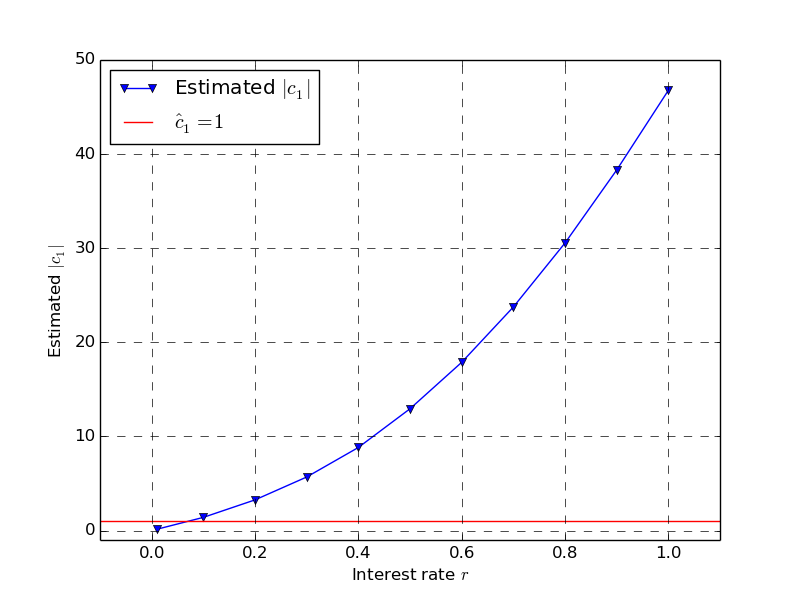}}
  \caption{Estimated $\displaystyle |c_1| = \frac{\left|\esp{Y_h}-\esp{Y_{\frac h2}}\right|}{h-\frac h2}$ when $r$ varies.}
  \subfloat[Euler scheme  ($\alpha=1,\beta=1$)]{\label{fig::empirical_bias_eu}\includegraphics[width=0.5\linewidth]{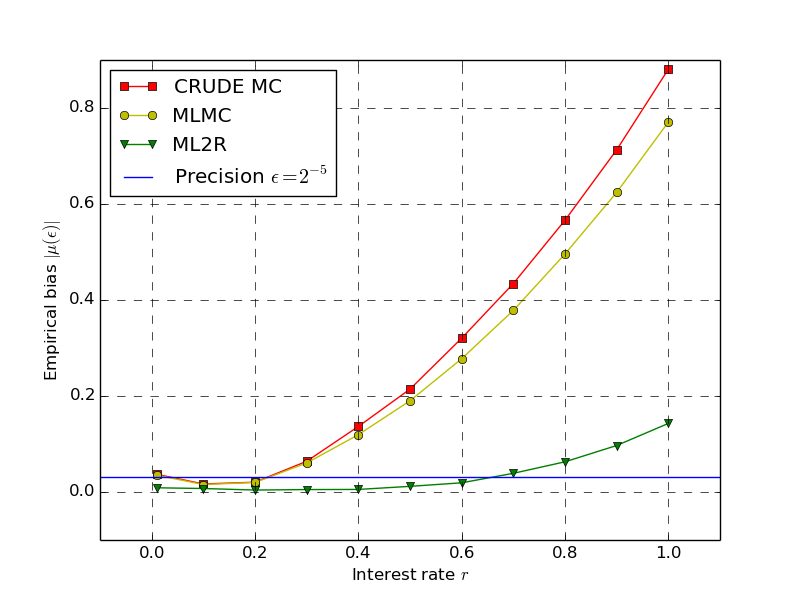}}
  \subfloat[Milstein scheme  ($\alpha=1,\beta=2$).]{\label{fig::empirical_bias_mil}\includegraphics[width=0.5\linewidth]{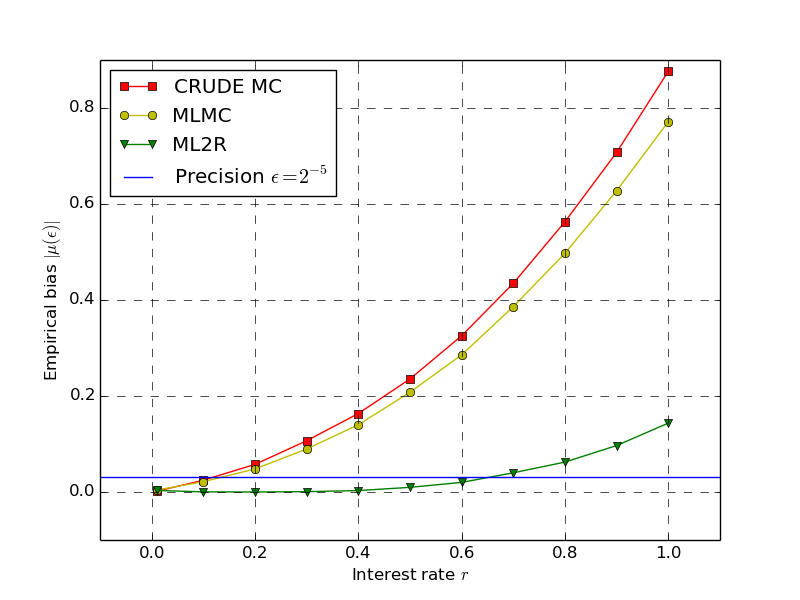}}
  \caption{Empirical bias $|\mu(\varepsilon)|$ for a Call option in a Black-Scholes model for a prescribed RMSE
		$\varepsilon=2^{-5}$ and for different values of $r$, taking $\hat c_\infty = \hat c_1 = 1$.}
\end{figure}

\subsection{Properties of the weights of the ML2R estimator}
\label{section::weights}

One significant difficulty in the proof of the Central Limit Theorem that we stated in Theorems \ref{th::clt_g1} and \ref{th::clt_le1}, 
is to deal with the weights $\W_j^R$ appearing in the ML2R estimator.
Moreover, the analysis of the behaviour of the weights is necessary when studying the asymptotic of the parameters 
$q=(q_1,\ldots, q_R)$ and $N$.
These weights are devised to kill the coefficients $c_1,\ldots,c_R$ in the bias expansion under the \eqref{lemaireweak_error}. They are defined as 
\begin{equation}
	\label{eq::weights}
	\W_j^R = \sum_{r=j}^R \mathbf{w}_r, \quad j=1,\ldots, R,
\end{equation} 
where the weights $\mathbf{w} = (\mathbf{w}_r)_{r=1, \ldots, R}$ 
are the solution to the Vandermonde system $V \w = e_1$, the matrix $V$ being defined by
\begin{equation}
	\label{eq::vandermonde_system}
	V = V(1,n_2^{-\alpha},\dots,n_{_R}^{-\alpha}) = \left( 
	\begin{array}{cccc}
		1      & 1                  & \cdots & 1                     \\
		1      & n_2^{-\alpha}      & \cdots & n_{_R}^{-\alpha}      \\
		\vdots & \vdots             & \cdots & \vdots                \\
		1      & n_2^{-\alpha(R-1)} & \cdots & n_{_R}^{-\alpha(R-1)} \\
	\end{array} \right).
\end{equation}
Notice that $\W_1^R = 1$ by construction. 
In order to give a more tractable expression of the weights $\W_j^R$, one notices that 
the weights $\mathbf{w}$ admit a closed form given by Cramer's rule, namely
$$\mathbf{w}_\ell = a_\ell b_{R-\ell}, \quad \ell=1,\ldots, R,$$
where $a_\ell = \frac{1}{\prod_{1\leq k \leq \ell-1} (1- M^{-k\alpha}) }$, $\ell=1,\ldots, R$, 
with the convention $\prod_{k=1}^0 (1- M^{-k\alpha}) = 1$, and 
$b_{\ell} = (-1)^{\ell} \frac{M^{-\frac\alpha 2 \ell(\ell+1)}}{\prod_{1\leq k \leq \ell} (1- M^{-k\alpha})}, \quad \ell=0,\ldots, R$.
As a consequence

$$\W_j^R = \sum_{\ell=j}^R a_\ell b_{R-\ell} = \sum_{\ell=0}^{R-j} a_{R-\ell} b_{\ell}, \quad j\in{1,\ldots, R}.$$
We will make an extensive use of the following properties, which are proved in Appendix \ref{appendix::weights}.

\begin{Lemma}
	\label{lemma::W_j_R_limits}
	Let $\alpha>0$ and the associated weights $(\W_j^R)_{j=1,\ldots,R}$ given in \eqref{eq::weights}.
	\begin{itemize}
		\item[(a)] $\displaystyle \lim_{\ell\to+\infty} a_\ell = a_\infty<+\infty$ and $\displaystyle \sum_{\ell=0}^{+\infty} |b_\ell| = \widetilde{B}_\infty < +\infty$.
		\item[(b)] The weights $\W_j^R$ are uniformly bounded,
		      \begin{equation}
		      	\label{eq::sup_W}
		      	\forall R \in \N^*, \forall j\in \left\{ 1, \ldots, R\right\}, \qquad \left| \W_j^R\right| \leq a_\infty \widetilde{B}_\infty.
		      \end{equation} 
		\item[(c)] For every $\gamma>0$,
		      \begin{equation*}
		      \lim_{R\to+\infty} \sum_{j=2}^R \left| \W_j^R \right| M^{-\gamma(j-1)} = \frac{1}{M^{\gamma}-1} .	
		      \end{equation*}
		\item[(d)] Let $\left\{ v_j\right\}_{j\geq1}$ be a bounded sequence of positive real numbers.
		      Let $\gamma\in\R$ and assume that $\ds \lim_{j\to+\infty} v_j = 1$ when $\gamma\geq 0$.
		      Then the following limits hold:
		      \begin{equation*}
		      	\sum_{j=2}^{R} \left| \W_j^{R}\right| M^{\gamma(j-1)} v_j \stackrel{R\to +\infty}{\sim}
		      	\begin{cases}
		      		\sum_{j\geq 2} M^{\gamma(j-1)} v_j < + \infty                                                & \text{for} \quad \gamma<0,  \\
		      		R                                                                                            & \text{for} \quad \gamma=0,  \\
		      		M^{\gamma R} a_\infty \sum_{j\geq 1} \left| \sum_{\ell=0}^{j-1} b_\ell \right| M^{-\gamma j} & \text{for} \quad \gamma>0 . 
		      	\end{cases}
		      \end{equation*}
	\end{itemize} 
\end{Lemma}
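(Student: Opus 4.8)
The plan is to reduce all four assertions to two structural facts about the Cramér coefficients $a_\ell,b_\ell$ and to a single convergence statement for the weights, after which each case becomes an application of (discrete) dominated convergence, with a change of summation index in the boundary-dominated regimes. I would begin with (a). Writing $a_\ell=\prod_{k=1}^{\ell-1}(1-M^{-k\alpha})^{-1}$, the sequence is increasing and $\log a_\ell=-\sum_{k=1}^{\ell-1}\log(1-M^{-k\alpha})$ converges, since $-\log(1-x)\sim x$ and $\sum_k M^{-k\alpha}$ is a convergent geometric series ($M\ge2$, $\alpha>0$); this yields $a_\ell\nearrow a_\infty<+\infty$ together with the quantitative tail bound $a_\infty-a_m\le C M^{-m\alpha}$, which I will reuse. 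For $b_\ell$, the denominator is bounded below by the convergent infinite product $\prod_{k\ge1}(1-M^{-k\alpha})>0$, so $|b_\ell|\le c\,M^{-\frac\alpha2\ell(\ell+1)}$; this Gaussian-type decay gives $\sum_\ell|b_\ell|=\widetilde B_\infty<+\infty$ and also the auxiliary bounds $\sum_\ell M^{\ell\alpha}|b_\ell|<+\infty$ and $\sum_\ell \ell|b_\ell|<+\infty$, which I front-load here precisely because the hard cases need them.

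The pivotal observation is that $\W_j^R=\sum_{\ell=0}^{R-j}a_{R-\ell}b_\ell$ combined with $\W_1^R=1$ forces the normalisation $a_\infty\sum_{\ell\ge0}b_\ell=1$: letting $R\to+\infty$ in $\W_1^R=1$ with domination $|a_{R-\ell}b_\ell|\le a_\infty|b_\ell|$ identifies the limit as $a_\infty\sum_{\ell\ge0}b_\ell$. The same argument gives $\lim_{R\to\infty}\W_j^R=1$ for each fixed $j$. Part (b) is then immediate: $a$ being increasing, $a_{R-\ell}\le a_\infty$ for $R-\ell\ge1$, whence $|\W_j^R|\le a_\infty\sum_{\ell=0}^{R-j}|b_\ell|\le a_\infty\widetilde B_\infty$. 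Part (c) and the case $\gamma<0$ of (d) both carry a geometrically decaying weight $M^{-\delta(j-1)}$ with $\delta>0$ ($\delta=\gamma$ in (c), $\delta=-\gamma$ in (d)): the summand is dominated by $a_\infty\widetilde B_\infty(\sup_k v_k)\,M^{-\delta(j-1)}$ and converges pointwise, as $R\to\infty$, to $M^{-\delta(j-1)}v_j$ via $|\W_j^R|\to1$, so dominated convergence applies and summing the geometric series gives $\frac1{M^\gamma-1}$ in (c).

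The delicate cases are $\gamma\ge0$ in (d), where the mass of the sum sits at the upper boundary $j\approx R$, exactly where $\W_j^R$ does not tend to $1$. Here I would substitute $i=R-j$ and use $\W_{R-i}^R=\sum_{\ell=0}^i a_{R-\ell}b_\ell\xrightarrow[R\to\infty]{}a_\infty B_i$, where $B_i:=\sum_{\ell=0}^i b_\ell$ (a finite sum of convergent terms). For $\gamma>0$, factoring out $M^{\gamma R}$ rewrites the sum as $M^{\gamma R}\sum_{i\ge0}|\W_{R-i}^R|M^{-\gamma(i+1)}v_{R-i}$; using $v_{R-i}\to1$ (the hypothesis $v_j\to1$ enters exactly here), the uniform bound from (b), and the summable dominating sequence $M^{-\gamma(i+1)}$, dominated convergence gives the limit $a_\infty\sum_{i\ge0}|B_i|M^{-\gamma(i+1)}$, which after reindexing $j=i+1$ is the stated constant; the $\sim$ follows since that constant is strictly positive ($a_\infty>0$). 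For $\gamma=0$ I would split $\sum_{j=2}^R|\W_j^R|v_j=\sum_{j=2}^R v_j+\sum_{j=2}^R(|\W_j^R|-1)v_j$: the first sum is $\sim R$ by Cesàro (as $v_j\to1$), and the correction is $O(1)=o(R)$ because $\sum_{i\ge0}|\W_{R-i}^R-1|$ is bounded, which I would prove via $|\W_{R-i}^R-1|\le|\W_{R-i}^R-a_\infty B_i|+a_\infty|\sum_{\ell>i}b_\ell|$ together with the estimates of (a): the first term is $O(M^{-R\alpha})$ uniformly in $i$ (using $|a_{R-\ell}-a_\infty|\le C M^{-(R-\ell)\alpha}$ and $\sum_\ell M^{\ell\alpha}|b_\ell|<\infty$), summing to $o(1)$ over the $R-1$ indices, and the second sums to $a_\infty\sum_\ell \ell|b_\ell|<\infty$.

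The main obstacle is the $\gamma\ge0$ regime of (d). Its essential difficulty is that the asymptotics are governed by the boundary layer $j\approx R$, where the weights converge not to $1$ but to the partial sums $a_\infty B_i$; making this rigorous requires the reindexing $i=R-j$ and both quantitative decay estimates from part (a) (notably $\sum_\ell \ell|b_\ell|<\infty$ and $\sum_\ell M^{\ell\alpha}|b_\ell|<\infty$), which is why I would prove them up front rather than only the bare convergence claims stated in (a).
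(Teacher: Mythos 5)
Your proof is correct, and it reaches all four claims, but it takes a technically different route from the paper in the two boundary-dominated cases of $(d)$. The shared skeleton is the same: monotone convergence of $a_\ell$, Gaussian-type decay of $b_\ell$, the normalisation $a_\infty\sum_{\ell\ge 0}b_\ell=1$ extracted from $\W_1^R=1$, dominated convergence for $(c)$ and for $\gamma<0$, and the reindexing $i=R-j$ with the boundary limits $\W_{R-i}^R\to a_\infty\sum_{\ell=0}^{i}b_\ell$ for $\gamma>0$. Where you diverge is in the machinery used to control $\W_j^R-1$: the paper's central auxiliary result is a \emph{soft uniform-convergence lemma}, namely $\sup_{1\le j\le \varphi(R)}\abs{\W_j^R-1}\to 0$ whenever $\varphi(R)\to\infty$ and $R-\varphi(R)\to\infty$, which it applies with $\varphi(R)=R-\sqrt{R}$ to settle $\gamma=0$ (splitting the sum at $\varphi(R)$ and invoking C\'es\`aro), and which it combines with a separate $\eta$-argument to first show that inserting the $v_j$'s does not change the asymptotics when $\gamma>0$, before computing $\lim_R M^{-\gamma R}\sum_j\abs{\W_j^R}M^{\gamma(j-1)}$ by dominated convergence. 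You instead prove quantitative tail estimates up front ($a_\infty-a_m=O(M^{-m\alpha})$, $\sum_\ell M^{\ell\alpha}\abs{b_\ell}<\infty$, $\sum_\ell \ell\abs{b_\ell}<\infty$), deduce the stronger statement $\sum_{j=2}^{R}\abs{\W_j^R-1}=O(1)$ uniformly in $R$, and then dispatch $\gamma=0$ by C\'es\`aro plus this $O(1)$ correction, and $\gamma>0$ by a single application of dominated convergence to the reindexed sum with $v_{R-i}$ already inside (legitimate, since for each fixed $i$ the term converges and the domination is uniform in $R$). What each approach buys: the paper's argument avoids any rate computation and isolates a reusable lemma about the weights; yours is self-contained, dispenses with the auxiliary lemma and the $\eta$-splitting, and yields a genuinely stronger intermediate fact ($\ell^1$ control of the deviations $\W_j^R-1$) at the modest cost of the extra estimates in $(a)$. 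One cosmetic point: for the equivalence in the case $\gamma>0$ you should note that the limit constant is nonzero not only because $a_\infty>0$ but also because the $j=1$ term of $\sum_{j\ge1}\abs{\sum_{\ell=0}^{j-1}b_\ell}M^{-\gamma j}$ equals $\abs{b_0}M^{-\gamma}=M^{-\gamma}>0$; this is immediate from $b_0=1$.
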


\subsection{Asymptotic of the allocation policy and of the size}
Let us analyze the allocation policy $q = (q_1,\ldots, q_{R})$ for the ML2R case. Since
\begin{equation}
	\label{eq::q_j}
	q_1(\varepsilon) = \mu^*(\varepsilon) \left(1+\theta \hh^{\frac\beta2}\right) \quad \mbox{and} \quad q_j(\varepsilon) = \theta \hh^{\frac \beta 2}  \underline{C}_{M,\beta}  \mu^*(\varepsilon)  \left|\W_j^{R(\varepsilon)}\right| M^{-\frac{\beta+1}{2}(j-1)}, \quad j=2,\ldots, R(\varepsilon),
\end{equation}
the condition $\sum_{j=1}^{R(\varepsilon)} q_j = 1$ yields

$$\mu^*(\varepsilon) = \left( 1 + \theta \hh^{\frac \beta 2} \left( 1 + \underline{C}_{M,\beta} \sum_{j=2}^{R(\varepsilon)} \left|\W_j^{R(\varepsilon)}\right| M^{-\frac{\beta +1}{2}(j-1)} \right) \right)^{-1}.$$
Owing to Lemma \ref{lemma::W_j_R_limits} $(c)$ with $\gamma = \frac{\beta+1}{2}$, the limit of this term as $\varepsilon \to 0$ is

$$\boldsymbol{\mu}^* = \left( 1 + \theta \mathbf{h}^{\frac \beta 2} \left( 1 + \frac{\underline{C}_{M,\beta}}{M^{\frac{1+\beta}{2}}-1} \right)\right)^{-1}.$$
Moreover, for all $\varepsilon\in(0,\bar{\varepsilon}]$, the following inequalities hold: 

\begin{equation}
	\label{rmk::mu_bounds}
	\frac{1}{\bar{\mu}^*}:= 1 + \theta \hh^{\frac \beta 2} \leq \frac{1}{\mu^*(\varepsilon)} \leq  1 + \theta \hh^{\frac \beta 2} \left( 1 + \underline{C}_{M,\beta} a_\infty \widetilde{B}_\infty \frac{1}{1- M^{-\frac{\beta+1}{2}}}\right) =: \frac{1}{\underline{\mu}^*}. 
\end{equation}

\begin{Remark}
	If we set $\W_j^{R(\varepsilon)} = 1$ for all $j=1, \ldots, R(\varepsilon)$, and $a_\infty \widetilde B_\infty = 1$, we obtain the same results for the MLMC allocation policy.
\end{Remark}
The asymptotic of the estimator size $N = N(\varepsilon)$ is given in the following Lemma.

\begin{Lemma}
	\label{lemma::N_convergence}
	$N=N(\varepsilon) \to +\infty$, as $\varepsilon \to 0$, with a convergence rate depending on $\beta$ as follows:
	
	\noindent \textbf{Case $\beta>1$}:
	
	\begin{equation*}
		N(\varepsilon) \sim C_\beta \varepsilon^{-2}
	\end{equation*}
	with 
	\begin{equation}
	\label{eq::const_C_beta_ge_1}
	 C_\beta = \displaystyle \frac{\Var (Y_0)}{\boldsymbol{\mu}^*} \left[ 1 + \theta \hh^{\frac \beta 2} \left( 1 + \bar{C}_{M,\beta} \frac{M^{\frac{1-\beta}{2}}}{1-M^{\frac{1-\beta}{2}}}\right) \right]
		\begin{cases}
			1 & \mbox{for ML2R,}  \\
			\displaystyle 1 + \frac1{2\alpha} & \text{for MLMC.} 
		\end{cases}
	\end{equation}

	\noindent \textbf{Case $\beta\leq1$}: We recall the expression of $R(\varepsilon)$ given in \eqref{eq::R_ML2R} for ML2R and \eqref{eq::R_MLMC} for MLMC.
	
	\begin{equation*}
		N(\varepsilon) \sim C_\beta \varepsilon^{-2}
		\begin{cases}
			R(\varepsilon)                      & \text{if} \quad \beta=1, \\
			M^{\frac{1-\beta}{2}R(\varepsilon)} & \text{if} \quad \beta<1 
		\end{cases}
	\end{equation*}
	where the constant $C_\beta$ reads

	\begin{equation}
		\label{eq::const_C_beta_eq_1}
		\displaystyle
		C_\beta = \frac{\Var (Y_0)}{\boldsymbol{\mu}^*} \theta \hh^{\frac 1 2} \bar{C}_{M,\beta}
		\begin{cases}
			1 					& \mbox{for ML2R,}  \\
			\displaystyle 1 + \frac1{2\alpha}  	& \text{for MLMC}  
		\end{cases}
		\quad \mbox{if} \quad \beta=1 
	\end{equation}
	and
	\begin{equation}
		\label{eq::const_C_beta_le_1}
		\displaystyle
		C_\beta = \frac{\Var (Y_0)}{\boldsymbol{\mu}^*} \theta \hh^{\frac \beta 2} \bar{C}_{M,\beta}
		\begin{cases}
			\displaystyle a_\infty \sum_{j\geq1} \left| \sum_{\ell = 0}^{j-1} b_\ell \right| M^{\frac{\beta-1}{2}j} & \mbox{for ML2R,}  \\
			\displaystyle \left(1 + \frac1{2\alpha}\right) \frac{1}{M^{\frac{1-\beta}{2}}-1} & \text{for MLMC}   
		\end{cases}
		\quad \mbox{if} \quad \beta<1. 
	\end{equation}
	
\end{Lemma}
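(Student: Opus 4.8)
The plan is to start from the closed-form values of $N(\varepsilon)$ recorded in Tables~\ref{tab:opt_param_MLRR} and~\ref{tab:opt_param_MLMC}, insert the saturation $h^*(\varepsilon)=\hh$ granted by Proposition~\ref{rmk::h_bold}, and then isolate the only genuinely $\varepsilon$-dependent quantity left, namely the weighted sum
$$S_R := \sum_{j=2}^{R} \abs{\W_j^{R}}\, M^{\frac{1-\beta}{2}(j-1)}, \qquad R = R(\varepsilon),$$
for ML2R (with its weight-free geometric analogue $\sum_{j=2}^R M^{\frac{1-\beta}{2}(j-1)}$ for MLMC). Everything else converges to an explicit constant: $\mu^*(\varepsilon)\to\boldsymbol{\mu}^*$ as established just above this statement, the prefactor $1+\frac{1}{2\alpha R(\varepsilon)}\to1$ for ML2R (while it is the fixed constant $1+\frac{1}{2\alpha}$ for MLMC), and $R(\varepsilon)\to+\infty$. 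Writing the numerator of $N(\varepsilon)$ as $\Var(Y_0)\pa[1]{1+\theta\hh^{\frac\beta2}+\theta\hh^{\frac\beta2}\bar{C}_{M,\beta}S_R}$, the whole problem thus reduces to the asymptotics of $S_R$.

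The key step is to feed this sum into Lemma~\ref{lemma::W_j_R_limits}(d) with the choice $\gamma=\frac{1-\beta}{2}$ and the constant sequence $v_j\equiv1$ (which trivially satisfies $v_j\to1$ whenever $\gamma\ge0$). The sign of $\gamma$ is exactly governed by $\beta$: one has $\gamma<0$ for $\beta>1$, $\gamma=0$ for $\beta=1$, and $\gamma>0$ for $\beta<1$, so the three regimes of the Lemma match the three cases of the statement one-to-one. This yields $S_R\to\sum_{j\ge2}M^{\frac{1-\beta}{2}(j-1)}=\frac{M^{\frac{1-\beta}{2}}}{1-M^{\frac{1-\beta}{2}}}$ (a convergent geometric series) when $\beta>1$; $S_R\sim R(\varepsilon)$ when $\beta=1$; and $S_R\sim M^{\frac{1-\beta}{2}R(\varepsilon)}\,a_\infty\sum_{j\ge1}\abs[2]{\sum_{\ell=0}^{j-1}b_\ell}\,M^{\frac{\beta-1}{2}j}$ when $\beta<1$. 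For MLMC the same three outcomes follow from a direct geometric-series computation: the sum converges to $\frac{M^{\frac{1-\beta}{2}}}{1-M^{\frac{1-\beta}{2}}}$ for $\beta>1$, equals $R-1\sim R(\varepsilon)$ for $\beta=1$, and is $\sim\frac{1}{M^{\frac{1-\beta}{2}}-1}M^{\frac{1-\beta}{2}R(\varepsilon)}$ for $\beta<1$.

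Finally I would reassemble $N(\varepsilon)$ by identifying the dominant term of the numerator in each regime. When $\beta>1$ the sum $S_R$ stays bounded and merges with the constants $1+\theta\hh^{\frac\beta2}$ to produce exactly the bracket of~\eqref{eq::const_C_beta_ge_1}; when $\beta\le1$ the divergence of $S_R$ (like $R(\varepsilon)$, respectively like $M^{\frac{1-\beta}{2}R(\varepsilon)}$) dominates the bounded summands $1+\theta\hh^{\frac\beta2}$, so only the factor $\theta\hh^{\frac\beta2}\bar{C}_{M,\beta}$ survives and one reads off $C_\beta$ as in~\eqref{eq::const_C_beta_eq_1}--\eqref{eq::const_C_beta_le_1}, the extra multiplicative $1+\frac{1}{2\alpha}$ appearing only for MLMC. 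The computation is essentially bookkeeping once Lemma~\ref{lemma::W_j_R_limits}(d) is in hand; the only point requiring care is the composition of the asymptotic equivalences — one must check that the factors $1+\frac1{2\alpha R(\varepsilon)}\to1$ and $\mu^*(\varepsilon)\to\boldsymbol{\mu}^*$ may be multiplied into a diverging $S_R$ without affecting the leading order, and that in the cases $\beta\le1$ the bounded constant terms are genuinely negligible against the divergent part. This absorption of lower-order terms is the main (and rather mild) obstacle, since the substantive analytic work has already been isolated in Lemma~\ref{lemma::W_j_R_limits}.
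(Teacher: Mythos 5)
Your proposal is correct and takes essentially the same route as the paper's own proof: both start from the closed-form expressions of $N(\varepsilon)$ in Tables~\ref{tab:opt_param_MLRR} and~\ref{tab:opt_param_MLMC} with $h=\hh$, reduce everything to the sum $\sum_{j=2}^{R(\varepsilon)}\abs{\W_j^{R(\varepsilon)}}M^{\frac{1-\beta}{2}(j-1)}$, and invoke Lemma~\ref{lemma::W_j_R_limits}~$(d)$ with $\gamma=\frac{1-\beta}{2}$ and $v_j\equiv1$ for ML2R (a plain geometric series for MLMC). The only difference is that you make explicit the bookkeeping the paper leaves implicit, namely that $\mu^*(\varepsilon)\to\boldsymbol{\mu}^*$, $1+\frac{1}{2\alpha R(\varepsilon)}\to1$, and the bounded terms $1+\theta\hh^{\frac\beta2}$ may be absorbed into the asymptotic equivalence when the sum diverges.
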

We notice that for $\beta \geq 1$  the asymptotic behaviour of $N(\varepsilon)$ for ML2R does not depend on the  weights $\W_j^R$ and the difference between the coefficient $C_\beta$ for ML2R and for MLMC estimator lies only in the factor $\left(1+\frac1{2\alpha}\right)$, whereas when $\beta<1$  the asymptotic of the weights has an impact on the behaviour of $N(\varepsilon)$ for ML2R. Still, in this case we observe that if $a_\infty = 1$ and $\left| \sum_{\ell = 0}^{j-1} b_\ell \right|=1$ for all $j\geq1$, hence $a_\infty \sum_{j\geq1} \left| \sum_{\ell = 0}^{j-1} b_\ell \right| M^{\frac{\beta-1}{2}j} = \frac{1}{M^{\frac{1-\beta}{2}}-1}$ and the factor $\left(1+\frac1{2\alpha}\right)$ appears again to be the only difference in the coefficient $C_\beta$ of $N(\varepsilon)$ for the two estimators.

\begin{proof}
	$\rhd$ ML2R: $N$ reads
	
	\begin{equation*}
		N = N(\varepsilon) = \left( 1  + \frac 1 {2 \alpha R(\varepsilon)}\right) \frac{\Var (Y_0)}{\mu^*} \frac{1}{\varepsilon^2} \left( 1 + \theta \hh^{\frac \beta 2} + \theta \hh^{\frac \beta 2} \bar{C}_{M,\beta} \sum_{j=2}^{R(\varepsilon)} \left| \W_j^{R(\varepsilon)} \right| M^{\frac {1-\beta}{2}(j-1)}  \right). 
	\end{equation*}
	We notice that $R(\varepsilon)\to+\infty$ as $\varepsilon\to0$ and use Lemma \ref{lemma::W_j_R_limits} $(d)$ with $\gamma = \frac{1-\beta}{2}$, with $v_j=1$ for each $j\geq1$, to complete the proof on the ML2R framework.
	 
	\
	
	$\rhd$ MLMC: The result follows directly from the convergence of the series $\sum_{j=2}^{R(\varepsilon)} M^{\frac {1-\beta}{2}(j-1)}$, since $N$ reads
	 
	$$N = N(\varepsilon) = \left( 1  + \frac 1 {2 \alpha}\right) \frac{\Var (Y_0)}{\mu^*} \frac{1}{\varepsilon^2} \left( 1 + \theta \hh^{\frac \beta 2} \left(1 + \bar{C}_{M,\beta} \sum_{j=2}^{R(\varepsilon)} M^{\frac {1-\beta}{2}(j-1)}  \right)\right).$$
\end{proof}

\section{Proofs}
\label{section::proofs}
We will use the notations

$$\widetilde{I}_\varepsilon^1:= \frac{1}{N_1(\varepsilon)} \sum_{k=1}^{N_1(\varepsilon)} Y_{\hh}^{(1),k} - \esp{Y_{\hh}^{(1),k}}
\quad \mbox{ and } \quad \widetilde{I}_\varepsilon^2:= \sum_{j=2}^{R(\varepsilon)} \frac{\W_j^{R(\varepsilon)}}{N_j(\varepsilon)} \sum_{k=1}^{N_j(\varepsilon)} \widetilde{Y}_j^k$$
where we set

\begin{equation}
	\label{eq::Y_widetilde}
	\widetilde{Y_j}:= Y_{\frac{\hh}{n_j}}^{(j)} - Y_{\frac{\hh}{n_{j-1}}}^{(j)} - \esp{Y_{\frac{\hh}{n_j}}^{(j)} - Y_{\frac{\hh}{n_{j-1}}}^{(j)}}, \quad j=1,\ldots,R(\varepsilon).
\end{equation}
These notations hold for both ML2R and MLMC estimators, where we set $\W_j^{R(\varepsilon)} = 1$, $j=1,\ldots, R(\varepsilon)$, for MLMC estimators.  
We notice that

\begin{equation}
	\label{eq::I_1_I_2}
	I_{\pi}^{N}(\varepsilon) - I_0 = \widetilde{I}^1_\varepsilon + \widetilde{I}^2_\varepsilon +\mu(\hh,R(\varepsilon),M)
\end{equation}
where the bias $\mu(\hh,R(\varepsilon),M) \to 0$ as $\varepsilon\to 0$ (see Section \ref{section::bias_analysis} for a detailed description of the bias).

\subsection{Proof of Strong Law of Large Numbers}
\label{section::SLLN}

The proof of the Strong Law of Large Numbers is a consequence of the following Proposition.

\begin{Proposition}
\label{prop::L_p_convergence}
 Let $p\geq2$. There exists a positive real constant $K(M,\beta,p)$ such that 
 \begin{equation}
 \label{eq::L_p_convergence}
	\esp{\left| \widetilde{I}_\varepsilon^2 \right|^p} \leq K(M,\beta,p) \varepsilon^p.
 \end{equation}
\end{Proposition}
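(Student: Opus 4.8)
The plan is to bound the $L^p$--norm of $\widetilde I_\varepsilon^2 = \sum_{j=2}^{R(\varepsilon)} \frac{\W_j^{R(\varepsilon)}}{N_j(\varepsilon)} \sum_{k=1}^{N_j(\varepsilon)} \widetilde Y_j^k$ by viewing it as a finite sum (over levels $j$) of independent, centered random variables $S_j := \frac{\W_j^{R(\varepsilon)}}{N_j(\varepsilon)} \sum_{k=1}^{N_j(\varepsilon)} \widetilde Y_j^k$, where the independence across $j$ comes from the fact that the families $Y^{(j)}$ are independent copies. The natural tool is a combination of Minkowski's inequality across the (finitely many) levels together with the Marcinkiewicz--Zygmund (or Rosenthal / Burkholder--Davis--Gundy) inequality inside each level to control the empirical average of $N_j$ i.i.d.\ centered summands.

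First I would apply Minkowski's inequality to obtain $\normLp{p}{\widetilde I_\varepsilon^2} \le \sum_{j=2}^{R(\varepsilon)} \abs{\W_j^{R(\varepsilon)}} \cdot \frac{1}{N_j(\varepsilon)} \normLp[3]{p}{\sum_{k=1}^{N_j(\varepsilon)} \widetilde Y_j^k}$. Then, inside each level, since the $\widetilde Y_j^k$ are i.i.d., centered, with $p$--th moment controlled by the $L^p$--strong error assumption \eqref{hp::lemairestrong_error_p} (note $\widetilde Y_j$ is the centering of a telescoping increment $Y_{\hh/n_j} - Y_{\hh/n_{j-1}}$, whose $L^p$--norm is $O\pa{(\hh/n_{j-1})^{\beta/2}} = O\pa{M^{-\frac\beta2(j-1)}}$ via a triangle inequality against $Y_0$), the Marcinkiewicz--Zygmund inequality yields $\normLp[3]{p}{\sum_{k=1}^{N_j} \widetilde Y_j^k} \le C_p \, N_j^{1/2} \normLp{p}{\widetilde Y_j}$. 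Dividing by $N_j$ gives a bound of order $N_j^{-1/2} M^{-\frac\beta2(j-1)}$ per level.

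Next I would insert the asymptotics of $N_j(\varepsilon) = \ceil{N(\varepsilon) q_j(\varepsilon)}$: using the explicit form of $q_j$ from \eqref{eq::q_j}, namely $q_j \sim \theta \hh^{\beta/2}\underline C_{M,\beta}\,\mu^*\,\abs{\W_j^{R}} M^{-\frac{\beta+1}{2}(j-1)}$, together with $N(\varepsilon) \sim C_\beta \varepsilon^{-2} \cdot (\text{level factor})$ from Lemma \ref{lemma::N_convergence}, one gets $N_j^{-1/2} \lesssim \varepsilon \, (\ldots)$. After substituting and collecting the powers of $M^{(j-1)}$, each summand over $j$ should contribute a factor $\varepsilon$ times $\abs{\W_j^R}$ times a geometric weight in $M$, so that the whole sum telescopes into $K(M,\beta,p)\,\varepsilon$ after taking the $p$--th power; here the uniform boundedness of the weights from Lemma \ref{lemma::W_j_R_limits}(b) and the summability/growth estimates of Lemma \ref{lemma::W_j_R_limits}(d) (applied with the appropriate $\gamma$ depending on whether $\beta > 1$, $\beta = 1$, or $\beta < 1$) are what guarantee the constant is finite and $\varepsilon$--independent.

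The main obstacle, I expect, is precisely the bookkeeping of the interplay between the weights $\W_j^R$ appearing both in the estimator and (through $q_j$) in the denominators $N_j$, because in the case $\beta < 1$ the sum $\sum_j \abs{\W_j^R} M^{\gamma(j-1)}$ grows with $R$ (Lemma \ref{lemma::W_j_R_limits}(d)) and must be matched exactly against the growth factor $M^{\frac{1-\beta}{2}R(\varepsilon)}$ hidden in $N(\varepsilon)$ so that the $R(\varepsilon)$--dependence cancels and only a uniform constant survives. Care is also needed because the ceiling in $N_j = \ceil{N q_j}$ can make some $N_j$ small or even force $q_j N$ below one at the finest levels; one must either control these boundary levels separately or absorb them using $N_j \ge 1$ together with the geometric decay of $\normLp{p}{\widetilde Y_j}$. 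Handling the three regimes ($\beta>1$, $\beta=1$, $\beta<1$) uniformly, and verifying that the resulting constant $K(M,\beta,p)$ genuinely does not depend on $\varepsilon$, is where the real work lies.
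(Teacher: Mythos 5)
Your overall plan founders on a genuine gap: bounding $\normLp{p}{\widetilde I_\varepsilon^2}$ by Minkowski's inequality \emph{across the levels} discards the independence of the levels, and this loss is fatal in the regime $\beta=1$. Concretely, after inserting $N_j\ge N q_j$, $q_j(\varepsilon)\approx \mu^*\theta\hh^{\beta/2}\underline{C}_{M,\beta}\abs{\W_j^{R}}M^{-\frac{\beta+1}{2}(j-1)}$ and $\normLp{p}{\widetilde Y_j}\le C M^{-\frac\beta2(j-1)}$, each level in your decomposition contributes $\abs{\W_j^R}\, N_j^{-1/2}\normLp{p}{\widetilde Y_j}\approx C\, N(\varepsilon)^{-1/2}\abs{\W_j^R}^{1/2} M^{\frac{1-\beta}{4}(j-1)}$. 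When $\beta=1$ these terms are all of comparable size $\approx N(\varepsilon)^{-1/2}$, so summing over the $R(\varepsilon)$ levels gives an upper bound of order $R(\varepsilon)N(\varepsilon)^{-1/2}\sim \varepsilon\, R(\varepsilon)^{1/2}/\sqrt{C_\beta}$, since $N(\varepsilon)\sim C_\beta\varepsilon^{-2}R(\varepsilon)$ by Lemma \ref{lemma::N_convergence}. In other words, your method only yields $\esp{\abs{\widetilde I_\varepsilon^2}^p}\lesssim \varepsilon^p R(\varepsilon)^{p/2}$, and since $R(\varepsilon)\to+\infty$ as $\varepsilon\to0$, no $\varepsilon$-independent constant $K(M,\beta,p)$ can be extracted: \eqref{eq::L_p_convergence} is not proved. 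Your route does happen to work for $\beta>1$ and $\beta<1$, because there the geometric sums (decaying, respectively growing) are dominated by a single term, so the triangle inequality loses nothing; but $\beta=1$ --- the Euler-scheme and nested Monte Carlo case --- is precisely where the cross-level cancellation must be exploited, and it is one of the three regimes the Proposition covers.

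The paper avoids this by applying Rosenthal's inequality \emph{once to the whole doubly-indexed array} $\bigl\{\frac{\W_j^{R}}{N_j}\widetilde Y_j^k\bigr\}_{j,k}$ of independent centered variables, rather than only within each level. This keeps the cross-level sum inside a power $p/2$ of a sum of variances: the dominant Rosenthal term is $\bigl(\sum_{j}\frac{\abs{\W_j^R}^2}{N_j}\esp{(\widetilde Y_j)^2}\bigr)^{p/2}\approx \bigl(R(\varepsilon)/N(\varepsilon)\bigr)^{p/2}\sim C\varepsilon^p$ when $\beta=1$, exactly mirroring the $L^2$ variance computation, while the secondary term $\sum_j\frac{\abs{\W_j^R}^p}{N_j^{p-1}}\esp{\abs{\widetilde Y_j}^p}$ is shown to be $O(\varepsilon^p)$ by a case analysis on $p<\beta+1$, $p=\beta+1$, $p>\beta+1$, using $R(\varepsilon)=o\bigl(\log(1/\varepsilon)\bigr)$ and the bound $1/N(\varepsilon)\lesssim\varepsilon^2$ times the appropriate level factor. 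So the fix is structural: replace Minkowski-across-levels by Rosenthal (or an equivalent martingale inequality) applied to the full array. A minor remark: your concern about the ceiling making some $N_j$ too small is unfounded, since $N_j=\lceil N q_j\rceil\ge Nq_j$, and this one-sided inequality is the only one either proof needs.
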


\begin{proof}
  $\rhd$ ML2R: We first give the proof of \eqref{eq::L_p_convergence} for the ML2R estimator.
  As a first step we show that, for all $p\geq2$, 
  
  \begin{equation}
	  \label{eq::y_j_p}
	  \esp{ \left| \widetilde{Y}_j \right|^p} \leq C_{M,\beta,p} M^{-\frac{\beta p}{2}(j-1)}, \quad j=1,\ldots,R(\varepsilon), \quad \mbox{with} \; C_{M,\beta,p} = 2^p V_1^{(p)} \left(1 + M^{\frac \beta2}\right)^p \hh^{\frac{\beta p}{2}}.
  \end{equation}
  By Minkowski's Inequality 
  
  \begin{multline*}
	  \left( \esp{\left| \widetilde{Y}_j \right|^{p}} \right)^{\frac 1 {p}} \leq \left\| Y_{\frac{\mathbf{h}}{n_j}} - Y_{\frac{\mathbf{h}}{n_{j-1}}} \right\|_{p} + \left| \esp{Y_{\frac{\mathbf{h}}{n_j}} - Y_{\frac{\mathbf{h}}{n_{j-1}}}} \right| \\
	  \leq \left\| Y_{\frac{\mathbf{h}}{n_j}} - Y_{\frac{\mathbf{h}}{n_{j-1}}} \right\|_{p} + \left\| Y_{\frac{\mathbf{h}}{n_j}} - Y_{\frac{\mathbf{h}}{n_{j-1}}} \right\|_{1} \leq 2  \left\| Y_{\frac{\mathbf{h}}{n_j}} - Y_{\frac{\mathbf{h}}{n_{j-1}}} \right\|_{p}.
  \end{multline*}
  Applying again Minkowski's Inequality, the $L^p$-strong approximation assumption \eqref{hp::lemairestrong_error_p} yields \eqref{eq::y_j_p}.
  
  As the random variables $\left( \widetilde{Y}_j^k\right)_{k\geq 1}$ are \iid and the $(\widetilde{Y}_j)_{j=1,\ldots,R(\varepsilon)}$ are centered and independent,
  Rosenthal's Inequality (see~\cite{HaHe80}, Theorem 2.12, p. 23) and 
\eqref{eq::y_j_p} imply
  
  \begin{align*}
	  \esp{\left| \widetilde{I}_\varepsilon^2 \right|^p} & = \esp{\left| \sum_{j=2}^{R(\varepsilon)} \sum_{k=1}^{N_j(\varepsilon)} \frac{\W_j^{R(\varepsilon)}}{N_j(\varepsilon)} \widetilde{Y}_j^k \right|^p}                                                                                                                                                                                                           \\
							    & \leq C_p\left[ \left( \sum_{j=2}^{R(\varepsilon)} \sum_{k=1}^{N_j(\varepsilon)} \frac{|\W_j^{R(\varepsilon)}|^2}{N_j(\varepsilon)^2} \esp{\left(\widetilde{Y}_j^k\right)^2} \right)^{\frac p2} + \sum_{j=2}^{R(\varepsilon)} \sum_{k=1}^{N_j(\varepsilon)} \esp{ \left| \frac{\W_j^{R(\varepsilon)}}{N_j(\varepsilon)}  \widetilde{Y}_j^k \right|^p}  \right] \\
							    & = C_p\left[ \left( \sum_{j=2}^{R(\varepsilon)} \frac{|\W_j^{R(\varepsilon)}|^2}{N_j(\varepsilon)} \esp{\left(\widetilde{Y}_j^k\right)^2} \right)^{\frac p2} + \sum_{j=2}^{R(\varepsilon)} \frac{|\W_j^{R(\varepsilon)}|^p}{N_j(\varepsilon)^{p-1}} \esp{\left| \widetilde{Y}_j \right|^p}  \right]                                                            \\
							    & \leq C_p\left[ C_{M,\beta,2}^{\frac p2} \left( \sum_{j=2}^{R(\varepsilon)} \frac{|\W_j^{R(\varepsilon)}|^2}{N_j(\varepsilon)} M^{-\beta(j-1)} \right)^{\frac p2} + C_{M,\beta,p}^p \sum_{j=2}^{R(\varepsilon)} \frac{|\W_j^{R(\varepsilon)}|^p}{N_j(\varepsilon)^{p-1}}  M^{-\frac{\beta p}{2}(j-1)} \right]                                                 
  \end{align*}
  where $C_p$ is a positive universal real constant.
  As $N_j(\varepsilon) = \lceil N(\varepsilon) q_j(\varepsilon)\rceil \geq N(\varepsilon) q_j(\varepsilon)$, we derive that 
  $$\frac1{N_j(\varepsilon)}\leq \frac{1}{N(\varepsilon)q_j(\varepsilon)},  \quad j=1,\ldots, R(\varepsilon).$$
  It follows from the expression of $q_j$ given in \eqref{eq::q_j} and from inequality \eqref{rmk::mu_bounds} that 
  
  $$\frac{|\W_j^{R(\varepsilon)}|}{q_j(\varepsilon)} \leq \frac{1}{\theta \hh^{\frac \beta 2}  \underline{C}_{M,\beta}  \underline{\mu}^*} M^{\frac{\beta+1}{2}(j-1)}, \quad j=2, \ldots, R(\varepsilon).$$
  Then, using that $\sup\limits_{\substack{j\in\{1, \ldots, R\}, R\geq1}} |\W_j^R|\leq a_\infty \widetilde{B}_\infty $, we get
  $$
  \esp{\left| \widetilde{I}_\varepsilon^2 \right|^p} \leq C_p\left[ A_1 \left( \frac{1}{N(\varepsilon)} \sum_{j=2}^{R(\varepsilon)} M^{\frac{1-\beta}{2}(j-1)} \right)^{\frac p2} + A_2 \frac{1}{N(\varepsilon)^{p-1}} \sum_{j=2}^{R(\varepsilon)}  M^{\frac{p - (\beta+1)}{2}(j-1)} \right]
  $$
  with $A_1 = A_1(M,\beta,p) = C_{M,\beta,2}^{\frac p2} \left( a_\infty \widetilde{B}_\infty \right)^{\frac p2} \left(\theta \hh^{\frac \beta 2}  \underline{C}_{M,\beta}  \underline{\mu}^* \right)^{-\frac p2}$ 
  and \\$A_2 = A_2(M,\beta,p) = C_{M,\beta,p}^p a_\infty \widetilde{B}_\infty  \left(\theta \hh^{\frac \beta 2}  \underline{C}_{M,\beta}  \underline{\mu}^*  \right)^{-(p-1)}$.
  Owing to Lemma \ref{lemma::N_convergence},  up to reducing $\bar \varepsilon$, we have
  
  \begin{equation}
	  \label{eq::1_on_N}
	  \forall \varepsilon \in (0, \bar \varepsilon], \quad
	  \frac{1}{N(\varepsilon)} \leq \frac{2}{C_\beta} \varepsilon^2
	  \begin{cases}
		  1                                    & \mbox{if } \beta > 1, \\
		  R(\varepsilon)^{-1}                  & \mbox{if } \beta = 1, \\
		  M^{-\frac{1-\beta}{2}R(\varepsilon)} & \mbox{if } \beta < 1. 
	  \end{cases}
  \end{equation}
  Moreover,
  
  \begin{equation*}
	  \sum_{j=2}^{R(\varepsilon)} M^{\frac{1-\beta}{2}(j-1)} \leq 
	  \begin{cases}
		  \frac{1}{1 - M^{\frac{1-\beta}{2}}}                                  & \mbox{if } \beta > 1, \\
		  R(\varepsilon)                                                       & \mbox{if } \beta = 1, \\
		  \frac{M^{\frac{1-\beta}{2}R(\varepsilon)} }{M^{\frac{1-\beta}{2}}-1} & \mbox{if } \beta < 1. 
	  \end{cases}
  \end{equation*}
  Then
  $\displaystyle \left( \frac{1}{N(\varepsilon)}  \sum_{j=2}^{R(\varepsilon)} M^{\frac{1-\beta}{2}(j-1)} \right)^{\frac p2} \leq K_1 \varepsilon^p $
  with
  \begin{equation*}
	  K_1 = K_1(M, \beta, p) = \left(\frac{2}{C_\beta}\right)^{\frac p2}
	  \begin{cases}
		  \left(1-M^{\frac{1-\beta}{2}} \right)^{-\frac p2} & \mbox{if } \beta > 1, \\
		  1                                                 & \mbox{if } \beta = 1, \\
		  \left(M^{\frac{1-\beta}{2}}-1 \right)^{-\frac p2} & \mbox{if } \beta < 1. 
	  \end{cases}
  \end{equation*}
  We establish now  that the same inequality holds for $\displaystyle \frac{1}{N(\varepsilon)^{p-1}}  \sum_{j=2}^{R(\varepsilon)}  M^{\frac{p - (\beta+1)}{2}(j-1)}$.
  
  We take $\varepsilon \in (0, \bar{\varepsilon}\wedge 1]$.
  Since $R(\varepsilon) = O\left(\sqrt{\log(1/\varepsilon)}\right) = o\Big( \log(1/\varepsilon)\Big)$ as $\varepsilon\to0$ owing to \eqref{eq::R_ML2R}, then
  
  \begin{equation}
	  \label{eq::o_piccolo}
	  \forall \delta>0, \;\exists \,\varepsilon_\delta :\quad \forall \varepsilon\in(0,\min(\bar{\varepsilon}, \varepsilon_\delta,1)] \quad R(\varepsilon)\leq \delta \log\left( \frac1\varepsilon\right).
  \end{equation}
  
  \begin{itemize}
	  \item[$\bullet$] For  $p<\beta+1$ (so that $\beta >1$): \\
		Since $\sum_{j=2}^{R(\varepsilon)}  M^{\frac{p - (\beta+1)}{2}(j-1)} \leq \sum_{j\geq0}  M^{\frac{p - (\beta+1)}{2}j}  < +\infty$ and, owing to \eqref{eq::1_on_N}, $1/N(\varepsilon) \leq (2/C_\beta)\varepsilon^2$, it follows directly that, since $p\geq 2$,
		
		$$\frac{1}{N(\varepsilon)^{p-1}} \sum_{j=2}^{R(\varepsilon)}  M^{\frac{p - (\beta+1)}{2}(j-1)} \leq K_2 \varepsilon^{2(p-1)} \leq K_2 \varepsilon^p$$
		with $K_2 = K_2(M,\beta,p) = \left( 1-M^{\frac{p-(\beta+1)}{2}}\right)^{-1}  (2/C_\beta)^{p-1}$.
		  
	  \item[$\bullet$] For $p=\beta+1$ (so that $\beta \geq1$): 
		  
		$$\frac{1}{N(\varepsilon)^{p-1}} \sum_{j=2}^{R(\varepsilon)}  M^{\frac{p - (\beta+1)}{2}(j-1)} \leq \frac{R(\varepsilon)}{N(\varepsilon)^{p-1}} .$$
		  
		The case $\beta=1$ leads to $p=2$, for which the SLLN follows directly from \eqref{eq::L2_convergence}.
		If $\beta >1$, owing to the expression of $N$ given in \eqref{eq::1_on_N} and setting $\delta=1$ in inequality \eqref{eq::o_piccolo}, we get
		  
		$$\frac{R(\varepsilon)}{N(\varepsilon)^{p-1}} \leq  \left( \frac 2 {C_\beta}\right)^{p-1} \varepsilon^{2(p-1)} \log\left( \frac 1 \varepsilon\right).$$
		Since $p>2$, we have $\displaystyle 0 < \varepsilon^{2(p-1)} \log\left( \frac 1\varepsilon\right) \leq \varepsilon^p \sup_{\varepsilon\in(0,\min(\bar{\varepsilon}, \varepsilon_\delta,1)]} \left(\varepsilon^{p-2} \log\left(\frac 1 \varepsilon\right)\right)$. 
		Hence
		$$\frac{R(\varepsilon)}{N(\varepsilon)^{p-1}} \leq K_2 \varepsilon^p$$
		with  $ \displaystyle K_2 = K_2(M,\beta,p) = \left( \frac 2 {C_\beta}\right)^{p-1} \sup_{\varepsilon\in(0,\min(\bar{\varepsilon}, \varepsilon_\delta,1)]} \left(\varepsilon^{p-2} \log\left(\frac 1 \varepsilon\right)\right)$.
		  
	  \item[$\bullet$] For $p > \beta + 1$: As $p-(\beta+1) >0$, one has
		  
		$$\frac{1}{N(\varepsilon)^{p-1}} \sum_{j=2}^{R(\varepsilon)}  M^{\frac{p - (\beta+1)}{2}(j-1)} \leq \frac{1}{M^{\frac{p-(\beta +1)}{2}}-1} \frac{M^{\frac{p - (\beta+1)}{2}R(\varepsilon)}}{N(\varepsilon)^{p-1}}. $$
		Owing to \eqref{eq::1_on_N}, for all $\beta>0$,
		$$\frac{1}{N(\varepsilon)^{p-1}} \leq \left(\frac{2}{C_\beta} \right)^{p-1} \varepsilon^{2(p-1)}.$$
		We set $\delta = (p-2)/\left(\log(M)\frac{p-(\beta+1)}{2}\right)$ in \eqref{eq::o_piccolo}. 
		The case $p=2$ follows from \eqref{eq::L2_convergence}, therefore we can assume $p>2$, which guarantees $\delta>0$.
		Finally one has
		  
		$$\frac{M^{\frac{p - (\beta+1)}{2}R(\varepsilon)}}{N(\varepsilon)^{p-1}} 
		\leq \left( \frac{2}{C_\beta}\right)^{p-1} \varepsilon^{2(p-1)} \varepsilon^{-(p-2)} = \left( \frac{2}{C_\beta}\right)^{p-1}  \varepsilon^p
		$$
		which yields
		  
		$$\frac{1}{N(\varepsilon)^{p-1}} \sum_{j=2}^{R(\varepsilon)}  M^{\frac{p - (\beta+1)}{2}(j-1)} \leq K_2 \varepsilon^p$$
		with $K_2 = K_2(M,\beta,p) = \frac{1}{M^{\frac{p-(\beta +1)}{2}}-1} \left( \frac{2}{C_\beta}\right)^{p-1}$.
		
  \end{itemize}
  
  Then \eqref{eq::L_p_convergence} holds with $K(M,\beta,p) = C_p \Big(A_1(M,\beta,p) K_1(M,\beta,p)+ A_2(M,\beta,p) K_2(M,\beta,p)\Big)$. 
  
  
  \
  
  $\rhd$ MLMC: The proof for the MLMC estimator follows the same steps, while replacing
  $\W_j^{R(\varepsilon)} = 1$, for $j=1,\ldots, R(\varepsilon)$, and $a_\infty\widetilde B_\infty=1$. 
  The only significantly different computations are the ones to get 
  \begin{equation}
   \label{eq::mlmc_ssln_inequality}
   \frac{1}{N(\varepsilon)^{p-1}} \leq \left(\frac{2}{C_\beta} \right)^{p-1} \varepsilon^{2(p-1)}.
  \end{equation}
  We give the detail of these computations in Appendix \ref{appendix::slln_mlmc_computations}.
\end{proof}

The Strong Law of Large Numbers follows as a consequence of Proposition \ref{prop::L_p_convergence}.

\begin{proof}[Proof of Theorem \ref{th::slln}]
	Owing to the decomposition \eqref{eq::I_1_I_2}, \eqref{eq::slln} amounts to proving
	
	\begin{equation}
		\label{eq::slln_2}
		\widetilde{I}_{\varepsilon_k}^1 \xrightarrow{a.s.} 0 \quad \mbox{ as } \quad k\to+\infty
		\quad \mbox{ and } \quad
		\widetilde{I}_{\varepsilon_k}^2 \xrightarrow{a.s.} 0 \quad \mbox{ as } \quad k\to+\infty.
	\end{equation}
	
	As $\lim_{\varepsilon\to 0} N_1(\varepsilon) = +\infty$ and the $(Y_{\hh}^{(1),k})_{k\geq 1}$ are \iid 
	and do not depend on $\varepsilon$, the convergence of $\widetilde{I}_{\varepsilon_k}^1$ is a direct application of the classical Strong Law of Large Numbers,
	for both ML2R and MLMC estimators.
	\
	
	To establish the $a.s.$ convergence of $\widetilde{I}_{\varepsilon_k}^2$, owing to Lemma \ref{prop::L_p_convergence}
	it is straightforward that for all sequence of positive values $(\varepsilon_k)_{k\geq1}$ such that $\varepsilon_k \to0$ as $k\to+\infty$ and $\sum_{k\geq1} \varepsilon_k^p <+\infty$
	 
	$$\sum_{k\geq1} \esp{\left| \widetilde{I}_{\varepsilon_k}^2 \right|^p}   < +\infty.$$
	Hence, by Beppo-Levi's Theorem, 
	$\sum_{k\geq1} \left| \widetilde{I}_{\varepsilon_k}^2 \right|^p   < +\infty \; a.s.$,
	which in turn implies 
	$\widetilde{I}_{\varepsilon_k}^2 \xrightarrow{a.s.} 0 \; \mbox{ as } k\to+\infty$.		
\end{proof}

\subsection{Proof of Central Limit Theorem}
\label{section::tcl}

This section is devoted to the proof of Theorems \ref{th::clt_g1} and \ref{th::clt_le1}. 
In order to satisfy a Lindeberg condition, we will need the assumption $\pa[1]{Z(h)}_{h \in \Hr}$ is $L^2$--uniformly integrable.
Owing to \eqref{lemaireweak_error}, $\bar R=1$, 
\begin{equation*}
    \abs{\esp{Z(h)}} = \abs{c_1 (1 - M^{\alpha})} h^{\alpha - \frac{\beta}{2}} + o(h^{\alpha - \frac{\beta}{2}}).
\end{equation*}
Since $2\alpha\geq \beta$, this deterministic sequence $\pa[1]{\esp{Z(h)}}_{h \in \Hr}$ is bounded. Hence, the $L^2$--uniform integrability of $\pa{Z(h)}_{h \in \Hr}$ yields the $L^2$--uniform integrability of the centered sequence $\pa[1]{\widetilde Z(h)}_{h \in \Hr} = \pa{Z(h) - \esp{Z(h)}}_{h \in \Hr}$.

One criterion to verify the $L^2$--uniform integrability is the following.
\begin{Lemma}\mbox{}
	\label{prop::L_2_norm_unif_integrability}
	\begin{itemize}
		\item[$(a)$] If there exists a $p>2$ such that $\sup_{h \in \Hr} \normLp{p}{Z(h)} < +\infty$ the family $\pa[1]{Z(h)}_{h \in \Hr}$ is $L^2$--uniformly integrable.
		\item[$(b)$] If there exists a random variable $D^{(M)} \in L^2$ 
such that, as $h\to0$, 
\begin{equation*}
    Z(h) 
\xrightarrow{\mathcal L} D^{(M)}
\end{equation*}
		      then the following conditions are equivalent (see~\cite{Bi99}, Theorem 3.6):
		      \begin{itemize}
		      \item[(i)] The family $\pa[1]{Z(h)}_{h \in \Hr}$ is $L^2$--uniformly integrable.
		      \item[(ii)] $\lim_{h \to 0} \normLp{2}{Z(h)} = \normLp{2}{D^{(M)}}$.
		      \end{itemize}
	\end{itemize}
\end{Lemma}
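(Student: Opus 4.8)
The plan is to treat the two parts separately: part $(a)$ is a one-line tail estimate exploiting the exponent gap $p>2$, while part $(b)$ is a translation of the claimed equivalence into the exact form of the cited moment-convergence theorem. For part $(a)$, recall that $L^2$--uniform integrability of $\pa[1]{Z(h)}_{h\in\Hr}$ means precisely $\lim_{K\to+\infty}\sup_{h\in\Hr}\esp{Z(h)^2\ind{\abs{Z(h)}>K}}=0$. On the event $\ac{\abs{Z(h)}>K}$ one has $\abs{Z(h)}^{2-p}\le K^{2-p}$, because $2-p<0$, so that
\[
\esp{Z(h)^2\ind{\abs{Z(h)}>K}}=\esp{\abs{Z(h)}^{p}\abs{Z(h)}^{2-p}\ind{\abs{Z(h)}>K}}\le K^{2-p}\esp{\abs{Z(h)}^{p}}=K^{2-p}\normLp{p}{Z(h)}^{p}.
\]
Taking the supremum over $h$ and setting $C:=\sup_{h\in\Hr}\normLp{p}{Z(h)}<+\infty$ gives the uniform bound $K^{2-p}C^{p}$, which tends to $0$ as $K\to+\infty$ since $p>2$. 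This settles $(a)$ with no further work.

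For part $(b)$, the strategy is to reduce the claimed equivalence to the statement in~\cite{Bi99}, Theorem 3.6. First I would note that $D^{(M)}\in L^2$ makes $\normLp{2}{D^{(M)}}$ finite, and that condition $(ii)$, $\normLp{2}{Z(h)}\to\normLp{2}{D^{(M)}}$, is equivalent to the convergence of second moments $\esp{Z(h)^2}\to\esp{(D^{(M)})^2}$ (both nonnegative, with finite limit). By the continuous mapping theorem the hypothesis $Z(h)\convloi D^{(M)}$ yields $Z(h)^2\convloi(D^{(M)})^2$, a convergence in law of \emph{nonnegative} variables. Since $L^2$--uniform integrability of $\pa[1]{Z(h)}_{h\in\Hr}$ is by definition the ordinary uniform integrability of $\pa[1]{Z(h)^2}_{h\in\Hr}$, the equivalence $(i)\Leftrightarrow(ii)$ is exactly the assertion that, for a distributionally convergent family of nonnegative integrable variables, uniform integrability holds if and only if the expectations converge to the finite expectation of the limit. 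The proof of $(b)$ thus consists in verifying these hypotheses and invoking the cited result.

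For completeness I would recall why the two implications hold and isolate the hard point. The direction $(i)\Rightarrow(ii)$ is routine: distributional convergence together with uniform integrability of $\pa[1]{Z(h)^2}_{h\in\Hr}$ forces convergence of the means, hence of the $L^2$--norms. The delicate direction is $(ii)\Rightarrow(i)$: here I would pass, via Skorokhod's representation theorem along any sequence $h_n\to0$, to an almost surely convergent version with $Z(h_n)^2\to(D^{(M)})^2$ \as, and then combine this with $\esp{Z(h_n)^2}\to\esp{(D^{(M)})^2}$ through a Scheff\'e/Vitali argument (almost sure convergence of nonnegative variables together with convergence of their integrals to the integral of the limit implies uniform integrability). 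Since the sequence $h_n\to0$ is arbitrary, the whole family is $L^2$--uniformly integrable. This last step, turning convergence of norms back into uniform integrability, is the main obstacle: it is precisely what forces recourse to Skorokhod's representation (or directly to the cited theorem), rather than the elementary moment inequality that suffices in part $(a)$.
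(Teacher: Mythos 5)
Your proof is correct and follows essentially the same route as the paper, which in fact offers no written proof of this lemma at all: part $(a)$ is left as the classical moment criterion, and part $(b)$ is disposed of by the citation of Theorem 3.6 in~\cite{Bi99}. Your arguments --- the bound $\esp{Z(h)^2\ind{\abs{Z(h)}>K}}\le K^{2-p}\normLp{p}{Z(h)}^{p}$ for $(a)$, and for $(b)$ the reduction via $Z(h)^2\convloi (D^{(M)})^2$ to uniform integrability of the squares followed by the Skorokhod--Scheff\'e argument (noting that uniform integrability depends only on the laws, and that the single sequence $h_n=\hh/n$ exhausts $\Hr$) --- are exactly the standard facts that the paper's citation stands for.
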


	  
Now we are in position to prove the Central Limit Theorem, in both cases $\beta>1$ and $\beta\in(0,1].$ 

\begin{proof}[Proof of Theorems \ref{th::clt_g1} and \ref{th::clt_le1}]
	
	Owing to the decomposition \eqref{eq::I_1_I_2}  (with $\W_j^{R(\varepsilon)} = 1$, $j=1,\ldots, R(\varepsilon)$ for MLMC estimator)  
	$$\frac{I_\pi^N(\varepsilon)-I_0}{\varepsilon} = \frac{\widetilde I_\varepsilon^1}{\varepsilon} + \frac{\widetilde I_\varepsilon^2}{\varepsilon} + \frac{\mu(\hh,R(\varepsilon),M)}{\varepsilon}$$
	where $\widetilde{I}_\varepsilon^1$ and $\widetilde{I}_\varepsilon^2$ are independent.
	The bias term has already been treated in \eqref{eq::bias_on_epsilon_lim}.\\
	
	\noindent $\rhd$ ML2R: \eqref{eq::clt_ml2r_g1} and \eqref{eq::clt_ml2r_le1} amounts to proving, as $\varepsilon\to0$, 
	
	\begin{minipage}{0.45\linewidth}
	\begin{equation}
		\label{eq::clt_1}
		\sqrt{N(\varepsilon)} \widetilde{I}_\varepsilon^1 \xrightarrow{\calL} \calN \pa{0, \frac{\Var(Y_\hh)}{\mathbf{q}_1}} 
	\end{equation}
	\end{minipage}
	\begin{minipage}{0.45\linewidth}
	\begin{equation}
		\label{eq::clt_2} \mbox{and} \qquad
		\frac{\widetilde{I}_\varepsilon^2}{\varepsilon} \xrightarrow{\calL} \calN \pa{0, \sigma_2^2} 
	\end{equation}
	\end{minipage}
	
	\noindent with $\sigma_2 = \sigma$ for $\beta\in(0,1]$. Indeed, for \eqref{eq::clt_1} let us write $\ds \frac{\widetilde{I}_\varepsilon^1}{\varepsilon} = \frac{1}{\varepsilon \sqrt{N(\varepsilon)}} \sqrt{N(\varepsilon)} \widetilde{I}_\varepsilon^1$. Using Lemma \ref{lemma::N_convergence}, $N(\varepsilon)$ reads
	$$
	N(\varepsilon) \stackrel{\varepsilon\to 0}{\sim} C_\beta \varepsilon^{-2}\left\{\begin{array}{ll} 1 & \mbox{ if }\beta>1,\\
	R(\varepsilon) & \mbox{ if } \beta =1,\\
	M^{\frac{1-\beta}{2} R(\varepsilon)} &\mbox{ if } \beta <1.
	\end{array} \right. \quad (ML2R)
	$$
	In particular, since $R(\varepsilon)\to+\infty$ as $\varepsilon\to0$, when $\beta\leq1$, $\displaystyle \frac{1}{\sqrt{N(\varepsilon)}} = o( \varepsilon)$ and the term $\displaystyle \frac{\widetilde{I}_\varepsilon^1}{\varepsilon}\to0$ in probability. Since $Y_{\hh}^{(1),k}$ does not depend on $\varepsilon$, $N_1(\varepsilon) \to +\infty$ and  
	$N_1(\varepsilon)/N(\varepsilon) \to \mathbf{q}_1$ as $\varepsilon\to0$, the asymptotic behaviour of the first term is driven by a regular Central Limit Theorem at rate $\sqrt{N(\varepsilon)}$, \emph{i.e.} 
	
	$$\sqrt{N(\varepsilon)} \widetilde{I}_\varepsilon^1 = \sqrt{N(\varepsilon)} \left[\frac{1}{N_1(\varepsilon)} \sum_{k=1}^{N_1(\varepsilon)} \pa{Y_{\hh}^{(1),k} -\esp{Y_{\hh}^{(1),k}}}\right] \xrightarrow{\varepsilon \to 0} \calN \pa{0, \frac{ \Var\pa{Y_{\hh}}}{\mathbf{q}_1}}$$
	which proves \eqref{eq::clt_1}.
	
	We will use Lindeberg's Theorem for triangular arrays of martingale increments (see Corollary 3.1 p.58 in~\cite{HaHe80}) to establish \eqref{eq::clt_2}. The random variables $\widetilde{Y}_j^k$ being centered and independent, the variance reads
	
	\begin{equation*}
		\Var\left( \sum_{j=2}^{R(\varepsilon)} \sum_{k=1}^{N_j(\varepsilon)} \frac 1 \varepsilon \frac{\W_j^{R(\varepsilon)}}{N_j(\varepsilon)} \widetilde{Y}_j^k \right) = \frac 1 {\varepsilon^2} \sum_{j=2}^{R(\varepsilon)} \left( \frac{\W_j^{R(\varepsilon)}}{N_j(\varepsilon)}\right)^2 N_j(\varepsilon) \Var\left(\widetilde{Y}_j\right)
		=\frac{1}{\varepsilon^2}  \sum_{j=2}^{R(\varepsilon)} \frac{\left( \W_j^{R(\varepsilon)}\right)^2}{N_j(\varepsilon)} \Var\left( \widetilde{Y}_j \right).                                                                                                                                                                                           
	\end{equation*}
	Noticing that $0\leq \frac1x-\frac{1}{\lceil x\rceil} \leq \frac{1}{x^2}$, $x>0$, 
	and that $N_j(\varepsilon) = \lceil q_j(\varepsilon) N(\varepsilon)\rceil$, we derive 
	
	\begin{equation*}
		\left|  \frac{1}{\varepsilon^2}\sum_{j=2}^{R(\varepsilon)} \frac{\left( \W_j^{R(\varepsilon)}\right)^2}{N_j(\varepsilon)} \Var\left( \widetilde{Y}_j \right) -  \frac{1}{\varepsilon^2} \sum_{j=2}^{R(\varepsilon)} \frac{\left( \W_j^{R(\varepsilon)}\right)^2}{q_j(\varepsilon) N(\varepsilon)} \Var\left( \widetilde{Y}_j \right)\right| 
		\leq \frac{1}{\varepsilon^2} \sum_{j=2}^{R(\varepsilon)} \frac{\left( \W_j^{R(\varepsilon)}\right)^2}{\left(q_j(\varepsilon) N(\varepsilon) \right)^2} \Var\left( \widetilde{Y}_j \right) . 
	\end{equation*}
	The conclusion will follow from 

	\begin{equation}
		\label{eq::variance}
		\lim_{\varepsilon\to0} \frac{1}{\varepsilon^2} \sum_{j=2}^{R(\varepsilon)} \frac{\left( \W_j^{R(\varepsilon)}\right)^2}{N(\varepsilon)q_j(\varepsilon)} \Var\left( \widetilde{Y}_j \right) = \sigma_2^2
	\end{equation}
	and
	\begin{equation}
		\label{eq::ceil}
		\lim_{\varepsilon\to0} \frac{1}{\varepsilon^2} \sum_{j=2}^{R(\varepsilon)} \frac{\left( \W_j^{R(\varepsilon)}\right)^2}{\left(q_j(\varepsilon) N(\varepsilon)\right)^2} \Var\left( \widetilde{Y}_j \right) = 0.
	\end{equation}
	
	Owing to the definition of $Z_j$ given in \eqref{def::tilde_Z_j}, we get $\Var(\widetilde{Y}_j) = \left( \frac{\hh}{n_j} \right)^\beta \Var(Z_j)$ and, using the expression of $q_j(\varepsilon)$ given in \eqref{eq::q_j}, we obtain
	
	\begin{equation*}
		\frac{1}{\varepsilon^2} \sum_{j=2}^{R(\varepsilon)} \frac{\left( \W_j^{R(\varepsilon)}\right)^2}{N(\varepsilon)q_j(\varepsilon)} \Var\left( \widetilde{Y}_j \right) = \frac{1}{\varepsilon^2 N(\varepsilon)}\frac{\mathbf{h}^{\frac \beta 2} }{\theta \underline{C}_{M,\beta}\mu^*(\varepsilon)} \sum_{j=2}^{R(\varepsilon)} \left| \W_j^{R(\varepsilon)} \right| M^{\frac{1-\beta}{2}(j-1) } \Var(Z_j).
	\end{equation*}
	$\bullet \; Case \; \beta>1$:
	Owing to the expression of $N(\varepsilon)$ given in Lemma \ref{lemma::N_convergence} when $\beta>1$,
	$$\lim_{\varepsilon\to0} \frac{1}{\varepsilon^2 N(\varepsilon)}\frac{\mathbf{h}^{\frac \beta 2} }{\theta \underline{C}_{M,\beta}\mu^*(\varepsilon)} = \frac1{\Sigma} \frac{\mathbf{h}^{\frac \beta 2}}{\sqrt{\Var (Y_0) V_1} \underline{C}_{M,\beta} }$$
	and owing to the limit in Lemma \ref{lemma::W_j_R_limits} $(d)$ with $\gamma = \frac{1-\beta}{2}<0$,
	$$\sum_{j=2}^{R(\varepsilon)} \left| \W_j^{R(\varepsilon)} \right| M^{\frac{1-\beta}{2}(j-1) } \Var(Z_j) = \sum_{j=2}^{+\infty}  M^{\frac{1-\beta}{2}(j-1) } \Var(Z_j)<+\infty.$$
	Hence the convergence of the variance \eqref{eq::variance} holds for Theorem \ref{th::clt_g1}.\\
	$\bullet \; Case \; \beta\leq1$: 
	Owing to the expression of $N(\varepsilon)$ given in Lemma \ref{lemma::N_convergence} when $\beta\leq1$, we get, as $\varepsilon\to0$,
	$$\frac{1}{\varepsilon^2 N(\varepsilon)}\frac{\mathbf{h}^{\frac \beta 2} }{\theta \underline{C}_{M,\beta}\mu^*(\varepsilon)} \sim \frac{1}{V_1 \pa{1+M^{\frac{\beta}{2}}}^2} \begin{cases} (R(\varepsilon))^{-1} & \mbox{if } \beta=1, \\ \pa{M^{\frac{1-\beta}{2}R(\varepsilon)} a_\infty \sum_{j\geq1} \left| \sum_{\ell = 0}^{j-1} b_\ell \right| M^{\frac{\beta-1}{2}j}}^{-1} & \mbox{if } \beta<1.\end{cases}$$
	We notice that $\ds \lim_{j\to+\infty} \Var(Z_j) = \tilde v_\infty (M,\beta)$ if $2\alpha>\beta$ and $\ds \lim_{j\to+\infty} \Var(Z_j) = \tilde v_\infty (M,\beta)- c_1^2 \left(1 - M^{\frac \beta2} \right)^2$ if $2\alpha=\beta$.
	Hence, owing to the limit in Lemma \ref{lemma::W_j_R_limits} $(d)$ with $\gamma = \frac{1-\beta}{2}\geq0$, we obtain \eqref{eq::variance} with $\sigma_2 = \sigma$ given in \eqref{eq::sigma_clt_le1} in Theorem \ref{th::clt_le1}. 
	
	For \eqref{eq::ceil}, it follows from the expression of $q_j(\varepsilon)$ in \eqref{eq::q_j} that
	$\displaystyle\frac{|\W_j^{R(\varepsilon)}|}{q_j(\varepsilon)} = \frac{M^{\frac{\beta+1}{2}(j-1)}}{\theta \hh^{\frac \beta 2}  \underline{C}_{M,\beta}  \mu^*(\varepsilon)}$. Owing to the definition of $Z_j$ in \eqref{def::tilde_Z_j} and to inequality \eqref{rmk::mu_bounds}, we get
	\begin{align*}
		\frac{1}{\varepsilon^2} \sum_{j=2}^{R(\varepsilon)} \frac{\left( \W_j^{R(\varepsilon)}\right)^2}{\left(q_j(\varepsilon) N(\varepsilon)\right)^2} \Var\left( \widetilde{Y}_j \right) 
		  & \leq \frac{\hh^\beta}{\left(\theta \hh^{\frac \beta 2}  \underline{C}_{M,\beta}  \underline{\mu}^*\right)^2} \frac{1}{\left( \varepsilon N(\varepsilon)\right)^2} \sum_{j=2}^{R(\varepsilon)} M^{(j-1)}\Var(Z_j)                        \\
		  & \leq \frac{\hh^\beta}{\left(\theta \hh^{\frac \beta 2}  \underline{C}_{M,\beta}  \underline{\mu}^*\right)^2} \frac{1}{\left( \varepsilon N(\varepsilon)\right)^2} \left(\sup_{j\geq1}\Var(Z_j) \right)\frac{M^{R(\varepsilon)-1}}{M-1}. 
	\end{align*}
	We conclude by showing that 
	\begin{equation}
	\label{eq::zero_limit_in_tcl}
	 \frac{M^{R(\varepsilon)}}{(\varepsilon N(\varepsilon))^2} \to 0, \quad \mbox{as} \quad \varepsilon\to0.
	\end{equation}
	Owing to the expression of $R(\varepsilon)$ given in \eqref{eq::R_ML2R}, we notice that 
	$R(\varepsilon) = O\left(\sqrt{\log(1/\varepsilon)}\right) = o\Big(\log(1/\varepsilon)\Big)$ as $\varepsilon\to0$. 
	Moreover, using Lemma \ref{lemma::N_convergence}, up to another reduction of $\bar \varepsilon$, we have 
	$\frac{1}{(\varepsilon N(\varepsilon))^2} \leq \left(\frac{2}{C_\beta}\right)^2 \varepsilon^2$ for all $\beta>0$. This in turn yields
	$$\frac{M^{R(\varepsilon)}}{(\varepsilon N(\varepsilon))^2} \leq \left(\frac{2}{C_\beta}\right)^2 \varepsilon^2M^{R(\varepsilon)} = C_\beta^{-2} e^{\log(M)R(\varepsilon) - 2\log(1/\varepsilon)}\to0, \quad \mbox{as} \quad\varepsilon\to0.$$
	Then \eqref{eq::ceil} is proved and so is the first condition of Lindeberg's Theorem.
	
	\
	
	For the second condition of Lindeberg's Theorem we need to prove that, for every $\eta >0$, 
	
	\begin{equation}
		\label{eq::second_lindeberg_condition}
		\sum_{j=2}^{R(\varepsilon)} \sum_{k=1}^{N_j(\varepsilon)} \esp{\left( \frac 1 \varepsilon \frac{\W_j^{R(\varepsilon)}}{N_j(\varepsilon)}  \widetilde{Y}_j^k \right)^2 \boldsymbol{1}_{\left\{  \left| \frac 1 \varepsilon \frac{\W_j^{R(\varepsilon)}}{N_j(\varepsilon)}   \widetilde{Y}_j^k \right|>\eta  \right\} } } \xrightarrow{\varepsilon \to 0} 0.
	\end{equation} 
	Since the $\left(\widetilde{Y}_j^k\right)_{k=1,\ldots, N_j(\varepsilon)}$ are identically distributed, we can write
	
	\begin{equation*}
		\sum_{j=2}^{R(\varepsilon)} \sum_{k=1}^{N_j(\varepsilon)}  \esp{\left( \frac 1 \varepsilon \frac{\W_j^{R(\varepsilon)}}{N_j(\varepsilon)}  \widetilde{Y}_j^k \right)^2 \boldsymbol{1}_{\left\{  \left| \frac 1 \varepsilon \frac{\W_j^{R(\varepsilon)}}{N_j(\varepsilon)}   \widetilde{Y}_j^k \right|>\eta  \right\} } }                                                                       \leq \sum_{j=2}^{R(\varepsilon)} \frac{1}{\varepsilon^{2}} \frac{\left|\W_j^{R(\varepsilon)}\right|^{2}}{N_j(\varepsilon)}   \esp{ \left(   \widetilde{Y}_j \right)^{2} \boldsymbol{1}_{\left\{  \left| \widetilde{Y}_j \right|> \eta \, \varepsilon  \frac{N_j(\varepsilon)}{\left| \W_j^{R(\varepsilon)} \right|}  \right\}  }}. 
	\end{equation*}
	We set $\widetilde Z_j = Z_j - \esp{Z_j}$. Replacing $q_j$ by its values given in \eqref{eq::q_j}, using Inequality  \eqref{eq::sup_W} from Lemma \ref{lemma::W_j_R_limits} $(b)$ and 
	the elementary  inequality $\frac{1}{N_j(\varepsilon)}\leq \frac{1}{q_j(\varepsilon)N(\varepsilon)}$, yields
	
	\begin{align*}
		  & \sum_{j=2}^{R(\varepsilon)} \sum_{k=1}^{N_j(\varepsilon)} \esp{\left( \frac 1 \varepsilon \frac{\W_j^{R(\varepsilon)}}{N_j(\varepsilon)}  \widetilde{Y}_j^k \right)^2 \boldsymbol{1}_{\left\{  \left| \frac 1 \varepsilon \frac{\W_j^{R(\varepsilon)}}{N_j(\varepsilon)}   \widetilde{Y}_j^k \right|>\eta  \right\} } }                                                                                                                                                                      \\
		  & \leq \frac{1}{\theta \hh^{\frac \beta 2} \underline{C}_{M,\beta} \mu^*(\varepsilon)}  \frac{1}{\varepsilon^2 N(\varepsilon)} \sum_{j=2}^{R(\varepsilon)}  \left|\W_j^{R(\varepsilon)}\right|  M^{\frac{\beta+1}{2}(j-1)}  \esp{\left( \widetilde{Y}_j \right)^{2} \boldsymbol{1}_{\left\{  \left| \widetilde{Y}_j \right|> \eta \frac{\theta \mathbf{h}^{\frac \beta 2} \underline{C}_{M,\beta} \mu^*(\varepsilon)}{M^{\frac{\beta+1}{2}(j-1)}} \varepsilon N(\varepsilon) \right\}}} \\
		  & \leq \frac{\hh^{\frac \beta 2} }{\theta \underline{C}_{M,\beta} \underline{\mu}^*} a_\infty \widetilde{B}_\infty  \frac{1}{\varepsilon^2 N(\varepsilon)}  \sum_{j=2}^{R(\varepsilon)} M^{\frac{1-\beta}{2}(j-1)} \esp{\left( \widetilde Z_j \right)^{2} \boldsymbol{1}_{\left\{  \left| \widetilde Z_j \right|> \eta \theta \underline{C}_{M,\beta} \underline{\mu}^* \varepsilon N(\varepsilon) M^{-\frac{j-1}{2}}\right\}}}                                                   \\
		  & \leq \frac{\hh^{\frac \beta 2} }{\theta \underline{C}_{M,\beta} \underline{\mu}^*} a_\infty \widetilde{B}_\infty  \sup_{2\leq j\leq R(\varepsilon)} \esp{\left( Z_j \right)^{2} \boldsymbol{1}_{\left\{  \left| \widetilde Z_j \right|> \Theta \varepsilon N(\varepsilon) M^{-\frac{R(\varepsilon)}{2}}\right\}}} \frac{1}{\varepsilon^2 N(\varepsilon)} \sum_{j=2}^{R(\varepsilon)} M^{\frac{1-\beta}{2}(j-1)}                                                                                            
	\end{align*}
	where we set $\Theta = \eta \theta \underline{C}_{M,\beta} \underline{\mu}^* \sqrt{M}$.
	Now, it follows from Lemma \ref{lemma::N_convergence}  that 
	
	$$ \frac{1}{\varepsilon^2 N(\varepsilon)} \sum_{j=2}^{R(\varepsilon)} M^{\frac{1-\beta}{2}(j-1)} 
	= \frac{1}{\varepsilon^2 N(\varepsilon)} \left( \frac{M^{\frac{1-\beta}{2}R(\varepsilon)}-M^{\frac{1-\beta}{2}}}{M^{\frac{1-\beta}{2}}-1} \ind{\beta\neq1} + (R(\varepsilon)+1)\ind{\beta=1} \right) \to K,$$
	as $\varepsilon\to0$, where $K$ is a real positive constant. 
	Owing to \eqref{eq::zero_limit_in_tcl} $\displaystyle\lim_{\varepsilon\to0} \varepsilon N(\varepsilon) M^{-\frac{R(\varepsilon)}{2}} = +\infty$.
	Hence, since we assumed that the family $(Z_j)_{j\geq 1}$ is $L^2$--uniformly integrable, we obtain that
	
	\begin{equation}
		\label{eq::sup_lim_uniform_integrability}
		\lim_{\varepsilon\to0} \sup_{2\leq j\leq R(\varepsilon)} \esp{\left( Z_j \right)^{2} \boldsymbol{1}_{\left\{  \left| Z_j \right|> \Theta \varepsilon N(\varepsilon) M^{-\frac{R(\varepsilon)-1}{2}}\right\}}} = 0
	\end{equation}
	and the second condition of Lindeberg's Theorem is proved.
	
	\
	
	\noindent $\rhd$ MLMC: 
	The proofs are quite the same as for ML2R, up to the constant $1+\frac1{2\alpha}$, coming from the constant $C_\beta$ in the asymptotic of $N(\varepsilon)$. Using Lemma \ref{lemma::N_convergence} and the expression of $R(\varepsilon)$ given in \eqref{eq::R_MLMC}, we obtain
	$$N(\varepsilon) \stackrel{\varepsilon\to 0}{\sim}  C_\beta \varepsilon^{-2} \left\{\begin{array}{ll} 1 & \mbox{ if }\beta >1,\\
	\frac{1}{\alpha\log(M)}\log\left(\frac 1 \varepsilon\right) &\mbox{ if } \beta =1,\\
	\varepsilon^{-\frac{1-\beta}{2\alpha}} &\mbox{ if } \beta <1.
	\end{array} \right. \quad (MLMC)
	$$
	We replace $\W_j^R = 1$, $j=1, \ldots, R$ and $a_\infty \widetilde B_\infty = 1$. The only significant difference comes when $\beta<1$, while proving \eqref{eq::zero_limit_in_tcl}. In this case, owing to Lemma \ref{lemma::N_convergence} as we did in \eqref{eq::1_on_N} and using the expression of $R(\varepsilon)$ given in \eqref{eq::R_MLMC}, up to reducing $\bar \varepsilon$, we can write
	$$
	\frac{M^{R(\varepsilon)}}{(\varepsilon N(\varepsilon))^2} 
	\leq \left(\frac{2}{C_\beta}\right)^2 \varepsilon^2 M^{-(1-\beta)R(\varepsilon)}M^{R(\varepsilon)}  
	\leq \left(\frac{2}{C_\beta}\right)^2 M^{\beta(C_R^{(1)} +1)} \varepsilon^{2-\frac\beta\alpha} 
	$$
	which goes to $0$, owing to the strict inequality assumption $2\alpha>\beta$.	
\end{proof}

\section{Applications}
\label{section::applications}
\subsection{Diffusions}

In this section we retrieve a recent result by Kebaier and Ben Alaya (see~\cite{BeKe15}) obtained for MLMC estimators 
and we extend it to the ML2R estimators and to the use of path-dependent functionals. Let $(X_t)_{t\in[0,T]}$ a Brownian diffusion process solution to the stochastic differential equation 
$$X_t = X_0 + \int_0^t b(s, X_s)ds + \int_0^t \sigma(s,X_s) \d W_s, \quad t\in[0,T]$$
where $b:[0,T]\times\RR^d \to \RR^d$, $\sigma:[0,T]\times\RR^d\to M(d,q,\RR)$ are continuous functions, 
Lipschitz continuous in $x$, uniformly in $t\in[0,T]$, $(W_t)_{t\in[0,T]}$ is a $q$-dimensional Brownian motion 
independent of $X_0$, both defined on a probability space $(\Omega, \mathcal A, \PP)$.

We know that $X=(X_t)_{t\in[0,T]}$ is the unique $(\mathcal F_t^W)_{t\in[0,T]}$-adapted solution to this equation, 
where $\mathcal F^W$ is the augmented filtration of $W$.
The process $(X_t)_{t\in[0,T]}$ cannot be simulated at a reasonable computational cost (at least in full generality), which leads to introduce some simulatable time discretization schemes, the simplest being undoubtedly the Euler scheme with step $h=\frac Tn$, $n\geq1$, defined by 
\begin{equation} \label{eq::diffusion}
	\bar X_t^n = X_0 + \int_0^tb(\underline s, \bar X_{\underline s}^n)ds + \int_0^t \sigma(\underline s, \bar X_{\underline s}^n) \d W_s
\end{equation}
with $\underline s = \frac{\lfloor ns\rfloor}{n}$, $s\in[0,T]$. In particular, if we set $t_k^n = k\frac Tn$,
$$\bar X_{t_{k+1}^n}^n = \bar X_{t_k^n}^n + b(t_k^n, \bar X_{t_k^n}^n) h + \sigma(t_k^n, \bar X_{t_k^n}^n)\sqrt h U_{k+1}^n,  \quad k\in \left\{ 0, \ldots,n-1\right\}$$
where $U_{k+1}^n = \frac{W_{t_{k+1}^n} - W_{t_k^n}}{\sqrt h}$ is \iid with distribution $\mathcal N(0,I_q)$. Furthermore, we also derive from \eqref{eq::diffusion} that
$$\bar X_t^n = \bar X_{\underline t}^n + b(\underline t, \bar X_{\underline t}^n) (t - \underline t) + \sigma(\underline t, \bar X_{\underline t}^n)(W_t-W_{\underline t}),  \quad t\in[0,T]. $$
It is classical background that, under the above assumptions on $b, \sigma, X_0$ and $W$, the Euler scheme satisfies 
the following a priori $L^p$-error bounds:
\begin{equation} \label{eq::diffusion_lp}
	\forall p \ge 2, \; \exists c_{b,\sigma,p,T}>0, \quad \left\|\sup_{t\in[0,T]} |X_t - \bar X_t^n|\right\|_p \leq c_{b,\sigma,p,T} \sqrt{\frac Tn}\left(1+\normLp{p}{X_0}\right).
\end{equation}

For the weak error expansion the existing results are less general. Let us recall as an illustration the celebrated Talay-Tubaro's and Bally-Talay's weak error expansions for marginal functionals of Brownian diffusions, i.e. functionals of the form $F(X) = f(X_T)$.

\begin{Theorem} 
	(a) Regular setting (Talay-Tubaro~\cite{TaTu90}): If $b$ and $\sigma$ are infinitely differentiable with bounded partial derivatives and if $f :  \RR^d \to \RR$ is an infinitely differentiable function, with all its partial derivatives having a polynomial growth, then for a fixed maturity $T > 0$ and for every integer $R \in N^*$
	\begin{equation} \label{eq::diffusion_weak_error}
		\esp{f(\bar X_T^n)} - \esp{f(X_T)} = \sum_{k=1}^R c_k \pa{\frac{1}{n}}^k + O\left( \pa{\frac{1}{n}}^{R+1} \right)
	\end{equation}
	where the coefficients $c_k$ depend on $b, \sigma, f , T$ but not on $n$.
	
	\noindent (b) (Hypo-)Elliptic setting (Bally-Talay~\cite{BaTa96}): If $b$ and $\sigma$ are infinitely differentiable with bounded partial derivatives and if $\sigma$ is uniformly elliptic in the sense that
	$$\forall x\in\RR^d, \; \forall t\in [0,T], \quad \sigma\sigma^*(x) \geq \varepsilon_0 I_q, \; \varepsilon_0>0$$
	or more generally if $(b, \sigma)$ satisfies the strong H\"ormander hypo-ellipticity assumption, then~\eqref{eq::diffusion_weak_error} holds true for every bounded Borel function $f : \RR^d \to \RR$.
\end{Theorem}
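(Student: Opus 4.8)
The classical route to both $(a)$ and $(b)$ goes through the backward Kolmogorov (Feynman--Kac) representation. The plan is to introduce
$$u(t,x) = \esp{f(X_T^{t,x})}, \quad (t,x) \in [0,T]\times\RR^d,$$
where $X^{t,x}$ denotes the diffusion started from $x$ at time $t$, so that $u$ solves the parabolic PDE $\partial_t u + \mL u = 0$ on $[0,T)\times\RR^d$ with terminal condition $u(T,\cdot) = f$, the generator being $\mL = \sum_i b_i \partial_{x_i} + \frac12 \sum_{i,j}(\sigma\sigma^*)_{ij}\partial_{x_i}\partial_{x_j}$. Under the smoothness assumptions of $(a)$, standard parabolic regularity theory guarantees that $u \in C^\infty([0,T]\times\RR^d)$ with all partial derivatives of at most polynomial growth in $x$, uniformly in $t$. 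This regularity of $u$ is the technical input that makes the whole scheme work.

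The key observation is that $\esp{f(\bar X_T^n)} - \esp{f(X_T)} = \esp{u(T,\bar X_T^n)} - u(0,X_0)$, which I would expand as the telescoping sum
$$\esp{f(\bar X_T^n)} - \esp{f(X_T)} = \sum_{k=0}^{n-1} \esp{u(t_{k+1},\bar X_{t_{k+1}}^n) - u(t_k,\bar X_{t_k}^n)}.$$
On each subinterval $[t_k,t_{k+1}]$ the Euler scheme is a genuine It\^o process with coefficients frozen at $t_k$, so I would apply It\^o's formula to $s \mapsto u(s,\bar X_s^n)$ and take expectations; the martingale part vanishes and, using the PDE $\partial_s u = -\mL u$, the difference between the frozen generator and $\mL$ produces terms whose order-$h$ parts cancel. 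What survives on each step is a local error of order $h^2$, whose integrand is an explicit smooth functional of the derivatives of $u$ and of $b,\sigma$ along the scheme. Summing $n$ terms of size $\domin{n^{-2}}$ reproduces the leading $\domin{n^{-1}}$ global error.

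To upgrade the single-term estimate into the full expansion up to order $R$, I would proceed iteratively. Expanding the one--step transition operator $g \mapsto \esp{g(\bar X_{t_{k+1}}^n)\mid \bar X_{t_k}^n = \cdot}$ in powers of $h$ and comparing it with the exact semigroup $e^{h\mL}$ produces a sequence of correction operators built from $b,\sigma$ and their derivatives. Feeding these back into the telescoping identity, and repeatedly using the PDE together with the polynomial--growth bounds on the derivatives of $u$ to estimate the Riemann--sum remainders, yields coefficients $c_k$ expressed as time integrals of $\esp{(\text{correction})\,u}$ along the true diffusion; by construction these depend only on $b,\sigma,f,T$ and not on $n$, with a remainder of order $n^{-(R+1)}$. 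This bookkeeping of successive corrections is the heaviest part of $(a)$, but it is routine once the regularity of $u$ is in hand.

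For $(b)$ the genuine difficulty is that $f$ is merely bounded Borel, so $u(T,\cdot) = f$ is no longer smooth and the argument above breaks down near the terminal time. The substitute is ellipticity (or, more generally, the strong H\"ormander condition), which forces the transition density $p(t,T,x,y)$ to be $C^\infty$ in $x$ for $t < T$; hence $u(t,\cdot) \in C^\infty$ on $[0,T)$, but its spatial derivatives blow up as $t \uparrow T$, with Gaussian--type bounds of the form $\norm{\partial_x^\gamma u(t,\cdot)}_\infty \le C (T-t)^{-\abs{\gamma}/2}\norm{f}_\infty$. The plan is then to run the same telescoping and It\^o expansion but to track these singular weights explicitly: the local error on $[t_k,t_{k+1}]$ is controlled by $h^2 (T-t_k)^{-\rho}$ for a suitable exponent $\rho$, and the main point is that $\sum_k h^{2}(T-t_k)^{-\rho}$ stays of order $h$ because $\int_0^T (T-t)^{-\rho}\,dt$ is integrable for the relevant $\rho$. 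Handling the final interval adjacent to $T$ separately, where the Gaussian smoothing is weakest, is the main obstacle and precisely where the ellipticity/hypoellipticity hypothesis is indispensable.
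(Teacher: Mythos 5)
The paper itself does not prove this theorem: it is recalled as classical background in the applications section, with the proof delegated entirely to the cited works \cite{TaTu90} for part (a) and \cite{BaTa96} for part (b). Your proposal can therefore only be measured against those original proofs. For part (a), your outline is essentially the Talay--Tubaro argument itself: the Feynman--Kac solution $u$ of $\partial_t u + \mathcal{L}u = 0$, its $C^\infty$ regularity with polynomially growing derivatives under the stated smoothness assumptions, the telescoping sum along the Euler grid, and the comparison of the one-step transition operator with the exact semigroup, which produces coefficients $c_k$ depending only on $b,\sigma,f,T$ and an $O(n^{-(R+1)})$ remainder. Apart from the (acknowledged) bookkeeping, there is nothing to object to there.

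Part (b), however, contains a genuine gap. For bounded Borel $f$ the bounds $\|\partial_x^\gamma u(t,\cdot)\|_\infty \le C\,(T-t)^{-|\gamma|/2}\|f\|_\infty$ mean that the local one-step error --- which involves spatial derivatives of $u$ of order up to four already at first order --- is of size $h^2 (T-t_k)^{-2}$, i.e.\ the relevant singular exponent is $\rho = 2$, not some integrable $\rho < 1$. Summing then gives $\sum_k h^2 (T-t_k)^{-2} = O(1)$, so the naive ``weighted telescoping'' you propose does not even yield first-order convergence, let alone an expansion to order $R$; and the steps adjacent to $T$ are worse still. This is precisely why the Bally--Talay proof is not a weighted variant of Talay--Tubaro: its engine is Malliavin-calculus integration by parts (Kusuoka--Stroock estimates, valid under uniform ellipticity or the strong H\"ormander condition), used to move derivatives off $f$ and onto smooth Malliavin weights and densities, together with a separate treatment of the final time steps. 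Your sketch correctly locates where the difficulty sits, but the mechanism you invoke to resolve it (integrability of the singular weight) is quantitatively false, and the tool that actually resolves it never appears in your argument.
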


For more general path-dependent functionals, no such result exists in general. For various classes of specified functionals depending on the running maximum or mean, some exit stopping time, first order weak expansions in $h^\alpha, \alpha \in (0,1]$, have sometimes been established (see~\cite{LePa14} for a brief review in connection with multilevel methods). However, as emphasized by the numerical experiments carried out in~\cite{LePa14}, such weak error expansion can be highly suspected to hold at any order under reasonable smoothness assumptions.

In this section we consider $F:\mathcal{C}_b([0,T], \R^d) \to \R$ a Lipschitz continuous functional and we set 
\begin{equation*}
    Y_0 = F(X) \quad \text{and} \quad Y_h = F(\bar X^n) \text{ with $h = \frac{T}{n}$ and $n \ge 1$ (\emph{i.e.} $\hh = T$).}
\end{equation*}
We assume the weak error expansion \eqref{lemaireweak_error}. We prove now that both estimators ML2R~\eqref{intro:MLRR-estimator} and MLMC~\eqref{intro:MLMC-estimator} satisfy a Strong Law of Large Numbers and a Central Limit Theorem when $\varepsilon$ tends to 0.
 
\begin{Theorem} Let $X_0 \in L^2$ and assume that $F:\mathcal{C}_b([0,T], \R^d) \to \R$ is a Lipschitz continuous functional. Then the assumption~\eqref{lemairestrong_error} is satisfied with $\beta = 1$.

If $X_0 \in L^p$ for $p \ge 2$, then the $L^p$--strong error assumption $\normLp{p}{Y_h - Y_0} \le V_1^{(p)} \sqrt{h}$ is satisfied so that both ML2R and MLMC estimators satisfy Theorem~\ref{th::slln}.

If $X_0 \in L^p$ for $p > 2$ and if $F$ is differentiable with $DF$ continuous, then the sequence $\pa[1]{Z(h)}_{h \in \Hr}$ is $L^2$--uniformly integrable and  
\begin{equation} \label{eq:cond_m2_diff}
    \exists v_\infty > 0, \quad  \lim_{h \to 0} \normLp{2}{Z(h)}^2 = (M-1) v_\infty.
\end{equation}
As a consequence, both ML2R and MLMC estimators satisfy Theorem~\ref{th::clt_le1} (case $\beta = 1$).
\end{Theorem}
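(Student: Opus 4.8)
The plan splits along the three assertions.

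\medskip

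\noindent For the strong-error assertion, the key observation is that the Lipschitz property of $F$ transfers the pathwise $L^p$ control of the Euler scheme directly to $Y_h - Y_0$. With $h = T/n$, I would write
$$\abs{Y_h - Y_0} = \abs{F(\bar X^n) - F(X)} \le [F]_{\Lip} \sup_{t\in[0,T]} \abs{\bar X^n_t - X_t},$$
take the $L^p$-norm and invoke the a priori bound \eqref{eq::diffusion_lp}. Since $\sqrt{T/n} = \sqrt h$, this yields $\normLp{p}{Y_h - Y_0} \le C \sqrt h$ with $C = [F]_{\Lip}\, c_{b,\sigma,p,T}(1 + \normLp{p}{X_0})$, which is exactly \eqref{hp::lemairestrong_error_p} with $\beta = 1$ and $V_1^{(p)} = C^p$. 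Specialising to $p = 2$ gives \eqref{lemairestrong_error} with $\beta = 1$; for general $p \ge 2$ and $X_0 \in L^p$, together with the assumed weak expansion \eqref{lemaireweak_error} all the hypotheses of Theorem~\ref{th::slln} hold, so the SLLN follows for both estimators.

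\medskip

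\noindent For the Central Limit Theorem I would verify the two hypotheses of Theorem~\ref{th::clt_le1} in turn, starting with the $L^2$-uniform integrability of $\pa[1]{Z(h)}_{h\in\Hr}$. The idea is to bound a moment of order $p' > 2$ and appeal to Lemma~\ref{prop::L_2_norm_unif_integrability}$(a)$. Recalling $Z(h) = (h/M)^{-1/2}(Y_{h/M} - Y_h)$ from \eqref{def::tilde_Z_j} and using again that $F$ is Lipschitz,
$$\abs{Y_{h/M} - Y_h} \le [F]_{\Lip}\pa{\sup_{t}\abs{\bar X^{nM}_t - X_t} + \sup_{t}\abs{X_t - \bar X^n_t}},$$
so \eqref{eq::diffusion_lp} applied at both discretization levels gives $\normLp{p'}{Y_{h/M} - Y_h} \le C'\sqrt h$ for any $2 < p' \le p$ (the finer level contributing $c\sqrt{h/M} \le c\sqrt h$). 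Hence $\normLp{p'}{Z(h)} = (h/M)^{-1/2}\normLp{p'}{Y_{h/M} - Y_h} \le C'\sqrt M$ is bounded uniformly in $h \in \Hr$, and since $p' > 2$ is available (because $X_0 \in L^p$ with $p > 2$), Lemma~\ref{prop::L_2_norm_unif_integrability}$(a)$ delivers the $L^2$-uniform integrability.

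\medskip

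\noindent It remains to identify $\lim_{h\to0}\normLp{2}{Z(h)}^2$, and this is where the differentiability of $F$ and the stable limit theorem for the Euler scheme enter. The plan is to linearise: since $DF$ is continuous and, by \eqref{eq::diffusion_lp}, $\bar X^n,\bar X^{nM}\to X$ in sup-norm, a first-order expansion gives
$$\sqrt n\,(Y_{h/M} - Y_h) = \sqrt n\, DF(X)\cdot(\bar X^{nM} - \bar X^n) + o_{\PP}(1).$$
By the stable convergence theorem of Ben Alaya--Kebaier~\cite{BeKe15}, $\sqrt n\,(\bar X^{nM} - \bar X^n) \convloi V$, where $V$ solves an explicit linear SDE whose source noise is a Brownian motion independent of $W$; the continuous mapping theorem then gives $Z(h) = \sqrt{M/T}\,\sqrt n\,(Y_{h/M} - Y_h) \convloi \sqrt{M/T}\,DF(X)\cdot V =: D^{(M)} \in L^2$. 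Combining this convergence in law with the $L^2$-uniform integrability already obtained, Lemma~\ref{prop::L_2_norm_unif_integrability}$(b)$ yields $\normLp{2}{Z(h)}^2 \to \normLp{2}{D^{(M)}}^2$. To finish I would compute this limit: the quadratic variation of $V$ aggregates the variance of the difference between the fine and coarse Euler discretization noises over each coarse step, which after summation and rescaling produces exactly a factor $M - 1$, so that $\normLp{2}{D^{(M)}}^2 = (M-1)v_\infty$ with $v_\infty > 0$ an $M$-independent constant --- this is \eqref{eq:cond_m2_diff}. The three hypotheses of Theorem~\ref{th::clt_le1} (strong error with $\beta = 1$, $L^2$-uniform integrability of $\pa[1]{Z(h)}_{h\in\Hr}$, and the limit of $\normLp{2}{Z(h)}^2$) are then in force, while the additional MLMC requirement $2\alpha > \beta$ for $\beta < 1$ is vacuous since $\beta = 1$; hence the CLT holds for both ML2R and MLMC.

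\medskip

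\noindent The main obstacle is this last step: making rigorous the passage of the factor $\sqrt n$ through the nonlinear functional $F$ --- one must control the Taylor remainder uniformly enough that, after multiplication by $\sqrt n$, it still vanishes, which relies on the continuity of $DF$ together with the tightness and uniform integrability already in hand --- and then correctly identifying the limit law $V$ and extracting the $M-1$ factor from the asymptotic variance of the coupled fine/coarse Euler noise. By contrast, the strong-error and uniform-integrability assertions are essentially direct consequences of \eqref{eq::diffusion_lp} and Lemma~\ref{prop::L_2_norm_unif_integrability}$(a)$.
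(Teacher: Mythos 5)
Your first two assertions are proved exactly as in the paper: the Lipschitz property of $F$ transfers the a priori bound \eqref{eq::diffusion_lp} to $\normLp{p}{Y_h-Y_0}\le C\sqrt h$, and the $L^2$--uniform integrability of $\pa[1]{Z(h)}_{h\in\Hr}$ follows from a uniform $L^{p}$ bound ($p>2$) on $Z(h)$ together with Lemma~\ref{prop::L_2_norm_unif_integrability}$(a)$. Your overall plan for \eqref{eq:cond_m2_diff} --- Ben Alaya--Kebaier's stable limit theorem for $\bar X^n-\bar X^{nM}$, transfer through $DF$, then Lemma~\ref{prop::L_2_norm_unif_integrability}$(b)$ --- is also the paper's plan.

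The one genuine gap is the linearization step, which you yourself flag as ``the main obstacle'' and leave open. Writing $\sqrt n\pa[1]{Y_{h/M}-Y_h}=\sqrt n\,DF(X)\cdot\pa[1]{\bar X^{nM}-\bar X^n}+o(1)$ in probability requires a Taylor remainder at the random point $X$ to vanish \emph{after} multiplication by $\sqrt n$; mere continuity of $DF$ provides no modulus for this, so the estimate cannot be justified as stated. The paper sidesteps any remainder: since $F$ is differentiable with $DF$ continuous, the increment is represented \emph{exactly} by the fundamental theorem of calculus along the segment joining the two schemes,
\begin{equation*}
    Z(h) \;=\; -\pa{\int_0^1 DF\pa{u\bar X^n + (1-u)\bar X^{nM}}\,\d u}\cdot U^{(M)}_n,
    \qquad U^{(M)}_n := \sqrt{nM}\,\pa{\bar X^n - \bar X^{nM}},
\end{equation*}
with the derivative evaluated along that segment rather than at $X$. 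The map $(x_1,x_2,x_3)\mapsto \int_0^1 DF(ux_1+(1-u)x_2)\,\d u\;x_3$ being continuous, the whole problem reduces to the \emph{joint} convergence in law of $\pa[1]{\bar X^n,\bar X^{nM},U^{(M)}_n}$ to $\pa[1]{X,X,U^{(M)}}$, and it is precisely the \emph{stable} (not merely weak) character of Ben Alaya--Kebaier's convergence that permits coupling the limit $U^{(M)}$ with $\bar X^n,\bar X^{nM}$, which live on the original space and converge to $X$ in $L^p$; the paper verifies this with bounded Lipschitz test functions $\phi,\psi$ by splitting $\phi(\widetilde X^n)\psi(U_n^{(M)})-\phi(\widetilde X)\psi(U^{(M)})$ into a term handled by stability and a term handled by \eqref{eq::diffusion_lp}. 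You invoke stable convergence in passing, but the continuous mapping theorem alone (applied to $U_n^{(M)}$ only) does not yield this joint limit. Finally, the factor $M-1$ need not be extracted from a variance computation of the coupled Euler noises: it is read off the explicit prefactor $\sqrt{(M-1)/2}$ in the formula \eqref{eq:defU_diff} for $U^{(M)}$, since $U^{(M)}/\sqrt{M-1}$ does not depend on $M$, giving $v_\infty$ directly.
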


\begin{proof}
First, note that if $F$ is a Lipschitz continuous functional, with Lipschitz coefficient $[F]_{\Lip}$, we have for all $p \ge 2$ 
\begin{equation*}
    \normLp{p}{Y_h - Y_0}^p
 \le [F]_{\Lip}^p \esp{\sup_{t \in [0, T]} \abs{X_t - \bar X^n_t}^p} \\
 \le [F]_{\Lip}^p c_{b, \sigma, p, T}^p (1 + \normLp{p}{X_0})^p  h^{\frac{p}{2}},
\end{equation*}
then $(Y_h)_{h \in \Hr}$ satisfies~\eqref{lemairestrong_error} with $\beta = 1$ and the $L^p$--strong error assumption as soon as $X_0 \in L^p$. 

Assume now that $X_0 \in L^p$ for $p > 2$. By a straightforward application of Minkowski's inequality we deduce from the $L^p$--strong error assumption that $\normLp{p}{Y_{\frac{h}{M}} - Y_h} \le C \sqrt{h}$ and then that $\sup_{h \in \Hr} \normLp{p}{Z(h)} < +\infty$. Applying the criterion $(a)$ of Lemma~\ref{prop::L_2_norm_unif_integrability} we prove that $\pa[1]{Z(h)}_{h \in \Hr}$ is $L^2$--uniformly integrable. 

At this stage it remains to prove~\eqref{eq:cond_m2_diff}. The key is Theorem~3 in~\cite{BeKe15}, where it is proved that 
\begin{equation*}
    \sqrt{n M} \left( \bar X^n - \bar X^{nM}\right) \xrightarrow{stably} U^{(M)}, \quad \text{as $n \to +\infty$},
\end{equation*}
where $U^{(M)}=(U^{(M)}_t)_{t\in[0,T]}$ is the $d-$dimensional process satisfying 
\begin{equation} \label{eq:defU_diff} 
    U^{(M)}_t = \sqrt{\frac{M-1}{2}} \sum_{i,j=1}^q V_t \int_0^t (V_s)^{-1} \nabla \varphi_{.j}(X_s) \varphi_{.i}(X_s) \d B_s^{i,j}, \quad t\in[0,T].
\end{equation}
We recall the notations of Jacod and Protter~\cite{JaPr98}
$$dX_t = \varphi(X_t) dW_t = \sum_{j=0}^q \varphi_{.j}(X_t) \d W_t^j$$
with $\varphi_{.j}$ representing the $j$th column of the matrix $\varphi = [\varphi_{ij}]_{\substack{i=1,\ldots,d, \\j=1,\ldots,q}}$, for $j=1,\ldots, q$, $\varphi_0=b$ and $W_t:=(t,W_t^1, \ldots,W_t^q)'$ (column vector), where $W_t^0=t$ and the $q$ remaining components make up a standard Brownian motion. Moreover, $\nabla \varphi_{.j}$ is a $d\times d$ matrix where $(\nabla \varphi_{.j})_{ik} = \partial_{x^k} \varphi_{ij}$  (partial derivative of $\varphi_{ij}$ with respect to the $k$th coordinate) and $(V_t)_{t\in[0,T]}$ is the $\RR^{d\times d}$ valued process solution of the linear equation 
$$V_t = I_d + \sum_{j=0}^q \int_0^t \nabla \varphi_{.j}(X_s) \d W_s^jV_s, \quad t\in[0,T].$$
Here $(B^{ij})_{1\leq i,j\leq q}$ is a standard $q^2$-dimensional Brownian motion independent of $W$. This process is defined on an extension $(\widetilde \Omega, \widetilde{\mathcal F}, (\widetilde{\mathcal F_t})_{t\geq0}, \widetilde \PP)$ of the original space $(\Omega, \mathcal F, (\mathcal F_t)_{t\geq 0}, \PP)$ on which lives~$W$.

We write, using that $h = \frac{T}{n}$, 
\begin{equation*}
    Z(h) = \sqrt{n M} \left( F(\bar X^{nM}) - F(\bar X^{n})\right) = - \int_0^1 DF \left( u\bar X^n + (1-u) \bar X^{nM} \right) \d u \cdot U^{(M)}_n
\end{equation*}
where $U^{(M)}_n := \sqrt{n M} \left( \bar X^n - \bar X^{nM} \right)$. The function $(x_1,x_2,x_3) \mapsto \int_0^1 DF(ux_1+ (1-u)x_2) \d u x_3$ is continuous, and it suffices to prove that $\left(\bar X^{n}, \bar X^{nM}, U^{(M)}_n \right) \xrightarrow{\mathcal L} (X,X,U^{(M)})$, as $n$ goes to infinity, to conclude that 
\begin{equation} \label{eq:ZconvLaw_diff}
    Z(h) \xrightarrow{\mathcal L} - DF(X)U^{(M)}, \quad \text{as $h \to 0$}.
\end{equation}
Let two bounded Lipschitz continuous functionals be $\phi:\mathcal C_b([0,T], \R^{2d})\to \RR$ and $\psi:\mathcal C_b([0,T], \R^{d})\to \RR$ and let denote $\widetilde X^n = (\bar X^n, \bar X^{nM})$ and $\widetilde X = (X,X)$. We write $\esp{\phi(\widetilde X^n) \psi (U^{(M)}_n) - \phi(\widetilde X)\psi(U^{(M)})} = 
\esp{(\phi(\widetilde X ^n)-\phi(\widetilde X))\psi (U^{(M)}_n) + \phi(\widetilde X)(\psi(U^{(M)}_n)-\psi(U^{(M)}))} $. Since $(U^{(M)}_n)_{n \ge 1}$ converges stably with limit $U^{(M)}$, we have that $\lim_{n \to +\infty} \esp{\phi(\widetilde X)(\psi(U^{(M)}_n)-\psi(U^{(M)}))} = 0$. On the other hand, owing to \eqref{eq::diffusion_lp}, we prove that $\lim_{n \to +\infty} \esp{(\phi(\widetilde X^n)-\phi(\widetilde X))\psi(U^{(M)}_n)} = 0$.

By \eqref{eq:ZconvLaw_diff} and Lemma~\ref{prop::L_2_norm_unif_integrability} $(b)$ we have that $\lim_{h \to 0} \normLp{2}{Z(h)}^2 = \normLp{2}{DF(X) U^{(M)}}^2 = (M-1) v_\infty$ with $v_\infty = \normLp{2}{DF(X) \frac{U^{(M)}}{M-1}}^2$ which does not depend on $M$ owing to the definition of $U^{M}$ given in~\eqref{eq:defU_diff}.
\end{proof}

\subsection{Nested Monte Carlo}

The aim of a \emph{nested} Monte Carlo method is to compute by Monte Carlo simulation 
$$\esp{f(\espc{X}{Y})}$$
where $(X,Y)$ is a couple  of $\RR\times\RR^{q_Y}$-valued random variables defined on a probability space $(\Omega, \mathcal{A}, \PP)$ with $X \in L^2(\PP)$ and $f:\RR\to\RR$ is a Lipschitz continuous function with Lipschitz coefficient $\left[f\right]_{\Lip}$. We assume that there exists a Borel function $F:\RR^{q_\xi}\times \RR^{q_Y}\to\RR$ and a random variable $\xi:(\Omega,\mathcal{A}) \to\RR^{q_\xi}$ independent of $Y$ such that
$$X = F(\xi,Y)$$ 
and we set $\mathbf{h} = \frac{1}{K_0}$ for some integer $K_0 \ge 1$, $h= 1/K$, $K\!\in K_0 \N^* = \ac{K_0, 2K_0, \dots}$ and 

\begin{equation}
 Y_0 := f(\espc{X}{Y}), \quad 
 Y_h = Y_{\frac{1}{K}}:= f \left( \frac{1}{K} \sum_{k=1}^{K} F(\xi_k, Y) \right)
\end{equation}
where $(\xi_k)_{k\geq 1}$ is a sequence of \iid variables, $\xi_k \sim \xi$, independent of $Y$.  
A \emph{nested} ML2R estimator then writes ($n_j=M^{j-1}$)
\begin{multline}
 \label{eq::nested_ml2r}
 I_{\pi}^N = \frac{1}{N_1} \sum_{i=1}^{N_1} f \left( \frac{1}{K} \sum_{k=1}^{K} F\left(\xi_k^{(1),i}, Y^{(1),i}\right) \right) \\ + \sum_{j=2}^R \frac{\W_j^R}{N_j} \sum_{i=1}^{N_j} \left(  f \left( \frac{1}{n_jK} \sum_{k=1}^{n_jK} F\left(\xi_k^{(j),i}, Y^{(j),i}\right) \right) - f \left( \frac{1}{n_{j-1}K} \sum_{k=1}^{n_{j-1}K} F\left(\xi_k^{(j),i}, Y^{(j),i}\right) \right) \right)
\end{multline}
where $\left(Y^{(j),i}\right)_{i\geq1}$ is a sequence of independent copies of $Y^{(j)} \sim Y$, $j=1, \ldots,R$, $Y^{(j)}$ independent of $Y^{(\ell)}$ for $j\neq\ell$, and $\left(\xi_k^{(j),i}\right)_{k,i\geq1, j=1,\ldots,R}$ is a sequence of \iid variables $\xi_k^{(j),i} \sim \xi$.
We saw in~\cite{LePa14} that, when $f$ is $2R$ times differentiable with $f^{(k)}$ bounded, the \emph{nested} Monte Carlo estimator satisfies \eqref{lemairestrong_error} with $\beta=1$ and \eqref{lemaireweak_error} with $\alpha=1$ and $\bar{R}=R-1$. Here we want to show that the $nested$ Monte Carlo satisfies also the assumptions of the Strong Law of Large Numbers \ref{th::slln} and of the Central Limit Theorem \ref{th::clt_le1}. Then, we define for convenience
\begin{equation}
 \phi_0(y) := \esp{F(\xi,y)}, \quad
 \phi_h(y) := \frac{1}{K} \sum_{k=1}^{K} F(\xi_k,y),\quad K\!\in K_0\N^*,
\end{equation}
so that $Y_0 = f(\phi_0(Y))$ and $Y_h = f(\phi_h(Y))$, and for a fixed $y$, we set $\sigma_F(y) := \sqrt{\Var (F(\xi,y))}$. 

\begin{Proposition} \label{prop::Lp_inequality_nested}
Still assuming that $f$ is Lispchitz continuous. If $X\in L^p(\PP)$ for $p \ge 2$, then there exists $V_1^{(p)}$ such that, for all $h=\frac 1 K$ and $h' = \frac 1 {K'}, K,K' \in K_0\N^*$, 
 \begin{equation} \label{eq::Lp_inequality_nested}
  \normLp{p}{Y_{h'} - Y_{h}}^p \leq V_1^{(p)} \left| h'-h\right|^{\frac p2}.
 \end{equation}
As a consequence, the assumption~\eqref{lemairestrong_error} and the $L^p$--strong error assumption~\eqref{hp::lemairestrong_error_p} are satisfied with $\beta = 1$. Then both ML2R and MLMC estimators satisfy a Strong Law of Large Numbers, see Theorem~\ref{th::slln}.
\end{Proposition}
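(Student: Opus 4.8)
The plan is to reduce the whole estimate to a moment bound for empirical means of the inner variable $F(\xi,\cdot)$, and to keep careful track of the coupling through the shared sequence $(\xi_k)$. Since $f$ is Lipschitz with constant $[f]_{\Lip}$, we have the pointwise bound $\abs{Y_{h'}-Y_h}\le [f]_{\Lip}\,\abs{\phi_{h'}(Y)-\phi_h(Y)}$, so it suffices to control $\normLp{p}{\phi_{h'}(Y)-\phi_h(Y)}$. I would then condition on $Y$: because $(\xi_k)_{k\ge1}$ is i.i.d.\ and independent of $Y$, for a fixed value $y$ the variables $G_k:=F(\xi_k,y)-\phi_0(y)$ are i.i.d., centered, with $\normLp{p}{G_k}=\sigma_p(y)$, where $\sigma_p(y):=\normLp{p}{F(\xi,y)-\phi_0(y)}$.

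Assuming without loss of generality $K'>K$ (the case $K'=K$ being trivial and the reversed case symmetric), the two empirical means built from the \emph{same} sequence $(\xi_k)$ split into two independent blocks, the first $K$ terms and the increment from $K$ to $K'$:
\[
\phi_{h'}(y)-\phi_h(y) = -\frac{K'-K}{KK'}\sum_{k=1}^{K}G_k + \frac{1}{K'}\sum_{k=K+1}^{K'}G_k .
\]
Applying the Marcinkiewicz--Zygmund inequality (or Rosenthal's inequality, already invoked in the proof of Proposition~\ref{prop::L_p_convergence}) together with Minkowski's inequality in $L^{p/2}$ yields, for $p\ge2$, the scaling $\normLp{p}{\sum_{k=1}^{n}G_k}\le B_p\sqrt{n}\,\sigma_p(y)$ for a universal constant $B_p$. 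Combining this with Minkowski's inequality on the two-term decomposition gives
\[
\normLp{p}{\phi_{h'}(y)-\phi_h(y)} \le B_p\,\sigma_p(y)\left(\frac{K'-K}{\sqrt{K}\,K'}+\frac{\sqrt{K'-K}}{K'}\right),
\]
and a short elementary computation (using $K'-K\le K'$ and $K\le K'$) shows that each of the two terms in the bracket is bounded by $\sqrt{(K'-K)/(KK')}=\sqrt{\abs{h'-h}}$.

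It then remains to integrate in $y$. Raising to the power $p$ and taking expectation over $Y$, I use $\esp{\sigma_p(Y)^p}=\esp{\abs{F(\xi,Y)-\phi_0(Y)}^p}=\esp{\abs{X-\espc{X}{Y}}^p}$, which is finite (bounded by $2^p\esp{\abs{X}^p}$ via conditional Jensen) exactly because $X\in L^p$. This produces \eqref{eq::Lp_inequality_nested} with $V_1^{(p)}=\bigl(2[f]_{\Lip}B_p\bigr)^p\,\esp{\abs{X-\espc{X}{Y}}^p}$. Running the same argument with $\phi_{h'}$ replaced by $\phi_0$ (so that the difference is the single scaled partial sum $\frac1K\sum_{k=1}^{K}G_k$), or letting $K'\to+\infty$ in \eqref{eq::Lp_inequality_nested}, gives $\normLp{p}{Y_h-Y_0}^p\le V_1^{(p)}h^{p/2}$, i.e. \eqref{hp::lemairestrong_error_p} with $\beta=1$; taking $p=2$ yields \eqref{lemairestrong_error}. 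Since moreover $Y_0=f(\espc{X}{Y})\in L^p$ (because $f$ is Lipschitz and $\espc{X}{Y}\in L^p$), and the weak error expansion \eqref{lemaireweak_error} is available at every order under the smoothness assumptions on $f$ recalled above, all hypotheses of Theorem~\ref{th::slln} are met, and the SLLN follows for both the ML2R and MLMC estimators.

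The delicate point is neither the moment inequality nor the final integration, but correctly handling the coupling: recognizing that, once we condition on $Y$, the two empirical means sharing the sequence $(\xi_k)$ decompose into the independent blocks displayed above, and then tuning the elementary estimate so that the two resulting terms recombine to exactly $\abs{h'-h}^{p/2}$ rather than a weaker power.
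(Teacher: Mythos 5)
Your proof is correct and follows essentially the same route as the paper's: the same two-block decomposition of the coupled empirical means $\phi_{h'}-\phi_h$ (the paper writes it as $(\tfrac{1}{K'}-\tfrac1K)S_K+\tfrac{1}{K'}(S_{K'}-S_K)$), the same $\sqrt{n}$ moment scaling (your Marcinkiewicz--Zygmund plus Minkowski-in-$L^{p/2}$ step is precisely the paper's Burkholder computation), and the same elementary recombination of the two terms into $\abs{h'-h}^{p/2}$. The only differences are cosmetic: you condition on $Y$ explicitly where the paper works unconditionally with $\widetilde X_k = F(\xi_k,Y)-\espc{F(\xi_k,Y)}{Y}$, and you spell out the passage from \eqref{eq::Lp_inequality_nested} to the bound against $Y_0$ (the case ``$K'=\infty$''), a step the paper leaves implicit.
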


\begin{proof}
Set $\widetilde X_k = F(\xi_k, Y) - \espc{F(\xi_k,Y)}{Y}$ and $S_k = \sum_{\ell=1}^k \widetilde X_\ell$.
As $f$ is Lipschitz,
\begin{align*}
 \normLp{p}{Y_{h'} - Y_{h}}^p &=  \normLp[4]{p}{f \left( \frac 1 {K'} \sum_{k=1}^{K'} F(\xi_k, Y) \right) - f \left( \frac 1 K \sum_{k=1}^K F(\xi_k, Y) \right)}^p \\
 &\leq \left[f\right]_{\Lip}^p \normLp[4]{p}{\frac 1 {K'} \sum_{k=1}^{K'} \widetilde X_k  - \frac 1 K \sum_{k=1}^K \widetilde X_k}^p 
 = \left[f\right]_{\Lip}^p \esp{\left| \frac{S_{K'}}{K'} - \frac{S_K}{K}\right|^p}.
\end{align*}
Assume without loss of generality that $K\leq K'$. Since $p\geq 2$,
\begin{align*}
  \esp{\left| \frac{S_{K'}}{K'} - \frac{S_K}{K}\right|^p} &= \esp{\left| \left( \frac{1}{K'} - \frac{1}{K}\right) S_{K} + \frac{1}{K'} (S_{K'} - S_K)\right|^p} \\
  &\leq 2^{p-1} \left[ \left| \frac{1}{K'} - \frac{1}{K}\right| ^p \esp{|S_{K}|^p} + \left( \frac{1}{K'}\right)^p \esp{\left|S_{K'}-S_K\right|^p} \right].
\end{align*}
Owing to Burkholder's inequality, there exists a universal constant $C_p$ such that
\begin{align*}
\esp{|S_{K}|^p} &\leq C_p \esp{\left|\sum_{k=1}^K  \widetilde X_k^2\right|^{\frac p2}} \leq C_p \left( \sum_{k=1}^K \left\| \widetilde X_k^2 \right\|_{\frac p2}\right)^{\frac p2} = C_p K^{\frac p2} \esp{\left| \widetilde X_1\right|^p}.
\end{align*}
Hence, as $S_{K'}-S_K \sim S_{K'-K}$ in distribution, 
\begin{align*}
\esp{\left| \frac{S_{K'}}{K'} - \frac{S_K}{K}\right|^p} 
\leq 2^{p-1}  C_p  \esp{|\widetilde{X}_1|^p}\left[ \left| \frac{1}{K'} - \frac{1}{K}\right|^p K^{\frac p2} + \left( \frac{1}{K'}\right)^p |K'-K|^{\frac p2}\right].
\end{align*}
Keeping in mind that $K'\geq K$, we derive
\begin{equation*}
\left| \frac{1}{K'} - \frac{1}{K}\right|^p K^{\frac p2} + \left| \frac{1}{K'}\right|^p |K'-K|^{\frac p2} 
= \left| \frac{1}{K'} - \frac{1}{K} \right|^{\frac p2}  \left| \frac{K}{K'} - 1  \right|^{\frac p2} +  
\left| \frac{K}{K'}\right| ^{\frac p2}\left| \frac{1}{K} - \frac{1}{K'} \right|^{\frac p2} \leq 2 \left| \frac{1}{K} - \frac{1}{K'} \right|^{\frac p2}.
\end{equation*}
We conclude by setting $V_1^{(p)} = \left[f\right]_{\Lip}^p 2^{p}  C_p \esp{|\widetilde{X}_1|^p}$.
\end{proof}

For the Central Limit Theorem to hold, the key point is the following Lemma.

\begin{Lemma} 
\label{prop::law_convergence}
Assume that $f:\RR\to\RR$ is a Lipschitz continuous function and differentiable with $f'$ continuous. Let $\zeta$ be an ${\cal N}(0,1)$-distributed random variable independent of $Y$. Then, as $h\to0$,
\begin{equation} \label{eq:prop_law_convergence}
    Z(h) = \sqrt{\frac{M}{h}} \pa{Y_{\frac{h}{M}} - Y_h} \xrightarrow{\calL} \sqrt{M-1} f^\prime(\phi_0(Y)) \sigma_F(Y) \zeta.
\end{equation}
\end{Lemma}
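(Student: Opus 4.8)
The plan is to linearize the increment $Y_{h/M}-Y_h$ via the fundamental theorem of calculus, isolate a Gaussian fluctuation through a central limit theorem taken \emph{conditionally on} $Y$, and then transfer this mixed convergence to the stated unconditional convergence in law through characteristic functions. Write $h=1/K$ with $K\in K_0\N^*$ and set $G_k:=F(\xi_k,Y)-\phi_0(Y)$ and $S_n:=\sum_{k=1}^n G_k$. Conditionally on $Y$ the $(G_k)_k$ are $\iid$, centered, with variance $\sigma_F(Y)^2$, which is finite $\as$ since $X=F(\xi,Y)\in L^2$; we may therefore reason for a.e. fixed value $y$ of $Y$. Because the coarse empirical mean reuses the first $K$ of the $MK$ inner samples, $\phi_h(Y)-\phi_0(Y)=S_K/K$ and $\phi_{h/M}(Y)-\phi_0(Y)=S_{MK}/(MK)$, so that $\phi_{h/M}(Y)-\phi_h(Y)=\frac{1}{MK}\big[(S_{MK}-S_K)-(M-1)S_K\big]$. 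Since $f$ is $C^1$, the fundamental theorem of calculus gives $Y_{h/M}-Y_h=A(h)\,(\phi_{h/M}(Y)-\phi_h(Y))$ with $A(h):=\int_0^1 f'\pa{\phi_h(Y)+u(\phi_{h/M}(Y)-\phi_h(Y))}\,\d u$, whence, using $\sqrt{M/h}=\sqrt{MK}$,
\begin{equation*}
    Z(h) = A(h)\,\frac{(S_{MK}-S_K)-(M-1)S_K}{\sqrt{MK}}.
\end{equation*}

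Next I would control the two factors separately, conditionally on $Y=y$. By the strong law of large numbers applied to the $\iid$ sequence $(G_k)_k$, $\phi_h(y)\to\phi_0(y)$ and $\phi_{h/M}(y)\to\phi_0(y)$ $\as$ as $h\to0$; continuity of $f'$ then yields $A(h)\to f'(\phi_0(y))$ $\as$ For the fluctuation factor, the blocks $S_K$ and $S_{MK}-S_K$ are built from disjoint sets of innovations $\xi_k$, hence are independent given $Y$; the classical CLT gives $(S_{MK}-S_K)/\sqrt{(M-1)K}\convloi\normal{0,\sigma_F(y)^2}$ and $S_K/\sqrt{K}\convloi\normal{0,\sigma_F(y)^2}$. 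Rescaling and using independence, $\big[(S_{MK}-S_K)-(M-1)S_K\big]/\sqrt{MK}\convloi\normal{0,v(y)}$ with $v(y)=\tfrac{M-1}{M}\sigma_F(y)^2+\tfrac{(M-1)^2}{M}\sigma_F(y)^2=(M-1)\sigma_F(y)^2$. A conditional Slutsky argument (the prefactor converging to the constant $f'(\phi_0(y))$ given $y$) then produces, for a.e. $y$,
\begin{equation*}
    Z(h)\convloi\normal{0,(M-1)f'(\phi_0(y))^2\sigma_F(y)^2}\quad\text{given } Y=y.
\end{equation*}

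Finally, I would lift this conditional statement to the unconditional one. Through characteristic functions, the display above reads $\espc{e^{\mathrm{i}tZ(h)}}{Y}\to e^{-\frac{t^2}{2}(M-1)f'(\phi_0(Y))^2\sigma_F(Y)^2}$ $\as$; since these conditional characteristic functions are bounded by $1$, dominated convergence gives $\esp{e^{\mathrm{i}tZ(h)}}\to\esp{e^{-\frac{t^2}{2}(M-1)f'(\phi_0(Y))^2\sigma_F(Y)^2}}$, which is exactly the characteristic function of $\sqrt{M-1}\,f'(\phi_0(Y))\,\sigma_F(Y)\,\zeta$ with $\zeta\sim\normal{0,1}$ independent of $Y$. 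This is precisely \eqref{eq:prop_law_convergence}.

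The main obstacle is the passage from conditional to unconditional convergence: the limit is a \emph{mixture} of centered Gaussians whose variance depends on $Y$ through both $f'(\phi_0(Y))$ and $\sigma_F(Y)$, so one cannot invoke Slutsky's theorem directly at the unconditional level. The characteristic-function route, combined with the independence of the two sample blocks and the a.s. finiteness of $\sigma_F(Y)^2$ inherited from $X\in L^2$, is what makes the argument rigorous; confirming the $\iid$-given-$Y$ structure of the inner samples and carrying out the variance bookkeeping $v(y)=(M-1)\sigma_F(y)^2$ are the remaining points demanding care.
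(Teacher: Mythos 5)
Your proof is correct and follows essentially the same route as the paper's: the fundamental-theorem-of-calculus factorization $Z(h)=A(h)\cdot u_h^{(M)}$, the conditional SLLN plus continuity of $f'$ for the prefactor, the CLT on the two independent sample blocks with the same variance bookkeeping $\frac{M-1}{M}+\frac{(M-1)^2}{M}=M-1$, Slutsky given $Y=y$, and a lift to the unconditional mixture law by dominated convergence. The only cosmetic difference is that you integrate characteristic functions $e^{\mathrm{i}tx}$ where the paper integrates general bounded continuous test functions $G$ via Fubini, which is the same argument.
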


\begin{proof}
First note that $Z(h) = z_h^{(M)}(Y)$ where $z_h^{(M)}$ is defined by 
\begin{equation*}
    \forall y \in \R^{q_Y}, \quad z_h^{(M)}(y) = \sqrt{\frac{M}{h}} \pa{f(\phi_{\frac{h}{M}}(y)) - f(\phi_h(y))}.
\end{equation*}
Let $y \in \R^{q_Y}$. We have 
\begin{equation} \label{eq:rep_zhM}
    z_h^{(M)}(y) = - \pa{\int_0^1 f'\pa{v \phi_h(y) + (1-v) \phi_{\frac{h}{M}}(y)} \d v} u^{(M)}_h(y)
\end{equation}
with $u^{(M)}_h(y) = \sqrt{\frac{M}{h}} \pa{\phi_h(y) - \phi_{\frac{h}{M}}(y)}$.
 We derive from the Strong Law of Large Numbers that $\lim_{h \to 0} \phi_h(y) = \phi_0(y) = \lim_{h \to 0} \phi_{\frac{h}{M}}(y)$  $a.s.$ and by continuity of the function $(x_1, x_2) \mapsto \int_0^1 f'(v x_1 + (1-v) x_2) \d v$ (since $f'$ is continuous) we get 
\begin{equation} \label{eq:zhM_first}
    \lim_{h \to 0} \int_0^1 f'\pa{v \phi_h(y) + (1-v) \phi_{\frac hM}(y)} \d v = f'\pa{\phi_0(y)} \quad a.s.
\end{equation}

We have now to study the convergence of the random sequence $u^{(M)}_h(y)$ as $h$ goes to zero. We set $\widetilde{\xi}_k = \xi_{k+K}$, $k=1,\ldots, K(M-1)$. Note that $(\widetilde{\xi}_k)_{k=1, \ldots, K(M-1)}$ are \iid with distribution $\xi_1$ and are independent of $(\xi_k)_{k=1\ldots,M}$. Then we can write
  \begin{align*}
    u^{(M)}_h(y) 
&= \sqrt{M K} \left( \frac 1 K \sum_{k=1}^K F(\xi_k, y) - \frac 1 {MK} \sum_{k=1}^{MK} F(\xi_k, y) \right) \\
					&= \sqrt{M K} \left( \frac {M-1}{MK} \sum_{k=1}^K \left(F(\xi_k, y) -\phi_0(y)\right)- \frac 1 {MK} \sum_{k=K+1}^{MK} \left(F(\xi_k, y)-\phi_0(y)\right) \right) \\
					&= \frac{M-1}{\sqrt{M}} \left( \frac {1}{\sqrt{K}} \sum_{k=1}^K F(\xi_k, y) - \phi_0(y) \right) \\					
& \qquad \qquad 
- \sqrt{\frac{M-1}{M}} \left[  \frac{1}{\sqrt{K(M-1)}} \left( \sum_{k=1}^{K(M-1)} F(\widetilde{\xi}_k, y) - \phi_0(y) \right) \right].
  \end{align*}
  Owing to the Central Limit Theorem and the independence of both terms in the right hand side of the above inequality, we derive that 
$$ u^{(M)}_h(y)
\xrightarrow{\mathcal{L}} \frac{M-1}{\sqrt{M}} \sigma_F(y) \zeta_1 - \sqrt{\frac{M-1}{M}} \sigma_F(y) \zeta_2, \quad \mbox{as} \; h\to0,$$
  where $\zeta_1$ and $\zeta_2$ are two independent random variables  both following a standard Gaussian distribution. Hence, noting that $\left( \frac{M-1}{\sqrt{M}}\right)^2 + \left( \sqrt{\frac{M-1}{M}} \right)^2 = M-1$, we obtain
  \begin{equation} \label{eq:uhM_conv}
u^{(M)}_h(y)
\xrightarrow{\mathcal{L}} \sqrt{M-1} \sigma_F(y) \zeta \quad \mbox{with} \quad \zeta\sim\mathcal{N}(0,1).
  \end{equation}

  By Slutsky's Theorem, we derive from \eqref{eq:rep_zhM}, \eqref{eq:zhM_first} and \eqref{eq:uhM_conv} that for every $y\in\RR^{q_Y}$,
\begin{equation}
    z_h^{(M)}(y) \xrightarrow{\calL} \sqrt{M-1} f'\pa{\phi_0(y)} \sigma_F(y) \zeta, \quad \text{as $h \to 0$}.  
    \label{eq::h_M_convergence}
\end{equation}

Recall that $Z(h) = z_h^{(M)}(Y)$. We prove \eqref{eq:prop_law_convergence} combining Fubini's theorem with Lebesgue dominated convergence theorem and \eqref{eq::h_M_convergence}. More precisely, for all $G \in \mathcal{C}_b$ we have 
\begin{align*}
    \lim_{h \to 0} \esp[2]{G(Z(h))} &= \lim_{h \to 0} \esp{G\pa{z_h^{(M)}(Y)}} = \esp{\lim_{h \to 0} G\pa{z_h^{(M)}(Y)}} \\ 
    & = \esp{G\pa{\sqrt{M-1} f'\pa{\phi_0(y)} \sigma_F(y) \zeta}}.
\end{align*}
\end{proof}

We are now in position to prove that the \emph{nested} Monte Carlo satisfies the assumptions of the Central Limit Theorem \ref{th::clt_le1}.

\begin{Theorem}
 Assume that $f:\RR\to\RR$ is a Lipschitz continuous function and differentiable with $f'$ continuous. Then $\pa[1]{Z(h)}_{h \in \Hr}$ is $L^2$--uniformly integrable and 
\begin{equation} \label{eq:thm_final_nested}
    \lim_{h \to 0} \normLp{2}{Z(h)}^2 = (M-1) \normLp{2}{f^\prime(\phi_0(Y)) \sigma_F(Y)}^2
\end{equation}
 As a consequence, the ML2R and MLMC estimators \eqref{intro:MLRR-estimator} and \eqref{intro:MLMC-estimator} satisfy a Central Limit Theorem in the sense of Theorem~\ref{th::clt_le1} (case $\beta =1$).
\end{Theorem}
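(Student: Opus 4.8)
The plan is to verify the three hypotheses of Theorem~\ref{th::clt_le1} in the case $\beta=1$: the strong error rate \eqref{lemairestrong_error} with $\beta=1$, the $L^2$--uniform integrability of $\pa[1]{Z(h)}_{h\in\Hr}$, and the identification of $\lim_{h\to0}\normLp{2}{Z(h)}^2$. The first is already contained in Proposition~\ref{prop::Lp_inequality_nested} (take $p=2$ in \eqref{eq::Lp_inequality_nested}), the weak error expansion being the one recalled for nested Monte Carlo ($\alpha=1$). The core of the work is therefore to prove the $L^2$--norm convergence \eqref{eq:thm_final_nested}; the uniform integrability will then follow at once from the equivalence in Lemma~\ref{prop::L_2_norm_unif_integrability}$(b)$, since Lemma~\ref{prop::law_convergence} already supplies the convergence in law $Z(h)\convloi \sqrt{M-1}\,f'(\phi_0(Y))\sigma_F(Y)\zeta$.

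The key is to argue first at a frozen $y$. Recall $Z(h)=z_h^{(M)}(Y)$ together with the representation \eqref{eq:rep_zhM}, which combined with the Lipschitz property of $f$ yields the pointwise domination $\abs{z_h^{(M)}(y)}\le [f]_{\Lip}\,\abs{u_h^{(M)}(y)}$. The decisive observation is that the second moment of $u_h^{(M)}(y)$ can be computed \emph{exactly} and is \emph{independent of $h$}: writing $u_h^{(M)}(y)$ as the difference of the two independent centered empirical sums appearing in the proof of Lemma~\ref{prop::law_convergence}, one finds
\begin{equation*}
    \esp{\abs{u_h^{(M)}(y)}^2} = \frac{(M-1)^2}{M}\,\sigma_F(y)^2 + \frac{M-1}{M}\,\sigma_F(y)^2 = (M-1)\,\sigma_F(y)^2 ,
\end{equation*}
which is precisely the second moment of the weak limit $\sqrt{M-1}\,\sigma_F(y)\zeta$ of \eqref{eq:uhM_conv}. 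Hence Lemma~\ref{prop::L_2_norm_unif_integrability}$(b)$, applied at fixed $y$, shows that $\pa[1]{u_h^{(M)}(y)}_{h}$ is $L^2$--uniformly integrable, and by the domination above so is $\pa[1]{z_h^{(M)}(y)}_{h}$. Combining this with the in--law convergence \eqref{eq::h_M_convergence} gives convergence of second moments,
\begin{equation*}
    g_h(y) := \esp{\abs{z_h^{(M)}(y)}^2} \longrightarrow (M-1)\,f'(\phi_0(y))^2\,\sigma_F(y)^2 \quad \text{as } h\to0 .
\end{equation*}

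It remains to integrate in $y$. Since $\xi$ is independent of $Y$, conditioning gives $\normLp{2}{Z(h)}^2 = \esp{g_h(Y)}$, while the exact computation above furnishes the $h$--uniform bound $g_h(y)\le [f]_{\Lip}^2 (M-1)\sigma_F(y)^2$, whose $Y$--expectation is finite because $\esp{\sigma_F(Y)^2}=\esp{\Var(F(\xi,Y)\mid Y)}\le \Var(X)<+\infty$ as $X\in L^2$. Lebesgue's dominated convergence theorem then yields \eqref{eq:thm_final_nested}, that is $v_\infty(M,1)=(M-1)\normLp{2}{f'(\phi_0(Y))\sigma_F(Y)}^2$. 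Feeding the convergence in law of Lemma~\ref{prop::law_convergence} together with this norm convergence into Lemma~\ref{prop::L_2_norm_unif_integrability}$(b)$ shows that $\pa[1]{Z(h)}_{h\in\Hr}$ is $L^2$--uniformly integrable, so that all hypotheses of Theorem~\ref{th::clt_le1} (case $\beta=1$, with $2\alpha>\beta$ since $\alpha=1$) are satisfied and the CLT follows.

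The main obstacle is that the theorem assumes only $X\in L^2$, so the comfortable route of bounding $\sup_h\normLp{p}{Z(h)}$ for some $p>2$ via Lemma~\ref{prop::L_2_norm_unif_integrability}$(a)$ is unavailable, and uniform integrability must be extracted from the $L^2$ theory alone. What makes this possible is the exact, $h$--independent value of $\esp{\abs{u_h^{(M)}(y)}^2}$: it both renders criterion $(b)$ applicable at frozen $y$ without any extra moment estimate, and provides the $h$--uniform, $Y$--integrable dominating function needed to lift the frozen--$y$ convergence to the global one. The only points requiring care are the justification of the conditioning (Fubini) and of the domination, both of which are legitimate because $f$ is globally Lipschitz and $\xi$ is independent of $Y$.
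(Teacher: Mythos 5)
Your proof is correct and rests on the same three pillars as the paper's own: the Lipschitz domination $\abs{Z(h)}\le [f]_{\Lip}\abs{u^{(M)}_h(Y)}$, the exact $h$--independent computation of the second moment of $u^{(M)}_h$, and Lemma~\ref{prop::L_2_norm_unif_integrability}$(b)$ combined with the convergences in law from Lemma~\ref{prop::law_convergence}. The organization, however, is genuinely different. The paper argues unconditionally: it computes $\normLp{2}{u^{(M)}_h(Y)}^2=(M-1)\esp{\sigma_F^2(Y)}$ exactly with $Y$ random, deduces from Lemma~\ref{prop::L_2_norm_unif_integrability}$(b)$ the $L^2$--uniform integrability of $\pa[1]{u^{(M)}_h(Y)}_{h\in\Hr}$, hence that of $\pa[1]{Z(h)}_{h\in\Hr}$ by domination, and only then obtains \eqref{eq:thm_final_nested} by applying the equivalence of Lemma~\ref{prop::L_2_norm_unif_integrability}$(b)$ a second time, now to $Z(h)$ itself. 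You instead freeze $y$: you establish uniform integrability and the second-moment convergence $g_h(y)\to(M-1)f'(\phi_0(y))^2\sigma_F(y)^2$ conditionally, recover \eqref{eq:thm_final_nested} by Fubini plus dominated convergence (your domination $g_h(y)\le[f]_{\Lip}^2(M-1)\sigma_F(y)^2$ with $\esp{\sigma_F^2(Y)}\le\Var(X)<+\infty$ is exactly what is needed), and derive the global uniform integrability last, as a consequence of the norm convergence rather than as a stepping stone to it. The two routes cost about the same; yours carries the extra conditioning/DCT layer, but it buys a small gain in rigor: at frozen $y$ the two blocks of summands in $u^{(M)}_h(y)$ are genuinely independent, whereas the paper's unconditional computation asserts that $S_{MK}-S_K$ is ``independent'' of $S_K$, which strictly speaking holds only conditionally on $Y$ (the cross term vanishes all the same, but for the conditional reason your argument makes explicit). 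Both proofs conclude identically by feeding the verified hypotheses into Theorem~\ref{th::clt_le1} with $\beta=1$, $\alpha=1$, so that $2\alpha>\beta$.
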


\begin{proof}
We prove first the $L^2$--uniform integrability of $\pa[1]{Z(h)}_{h \in \Hr}$. As $f$ is Lipschitz we have, 
\begin{equation*}
    \abs{Z(h)}^2 \le [f]_{\Lip}^2 \abs{u^{(M)}_h(Y)}^2, \quad \text{with} \quad u^{(M)}_h(y) = \sqrt{\frac{M}{h}} \pa{\phi_h(y) - \phi_{\frac{h}{M}}(y)}.
\end{equation*}
Consequently it suffices to show that $\pa{u^{(M)}_h(Y)}_{h \in \Hr}$ is $L^2$--uniformly integrable, to establish the $L^2$--uniform integrability of $\pa[1]{Z(h)}_{h \in \Hr}$.

We saw in the proof of Proposition \ref{prop::law_convergence} that $u^{(M)}_h(Y) \xrightarrow{\mathcal L} \sqrt{M-1} \sigma_F(Y) \zeta$ as $h$ goes to $0$, where $\zeta$ is a standard normal random variable independent of $Y$. Owing to Lemma~\ref{prop::L_2_norm_unif_integrability} $(b)$, the uniform integrability will follow from $\lim_{h \to 0} \normLp{2}{u^{(M)}_h(Y)} = \normLp{2}{\sqrt{M-1} \sigma_F(Y) \zeta}$. In fact this convergence holds as an equality. Indeed
\begin{equation*}
 \normLp{2}{u^{(M)}_h(Y)}^2 = MK \esp{\left( S_K - S_{MK}\right)^2} = MK \esp{\left( \frac{M-1}{MK} S_K -\frac{1}{MK}(S_{MK} - S_K)\right)^2}.
\end{equation*}
We notice that $S_{MK} - S_K$ is independent of $S_K$. Hence, since the $\xi_k$ are independent,
\begin{align*}
 \normLp{2}{u^{(M)}_h(Y)}^2 &= MK \left( \esp{\left(\frac{M-1}{MK} S_K\right)^2} + \esp{\left(\frac{1}{MK}(S_{MK} - S_K)\right)^2} \right) \\
 &= MK \left( \left(\frac{M-1}{MK}\right)^2 K \esp{(\widetilde X_1)^2} + \left(\frac{1}{MK}\right)^2 (MK-K)\esp{\left(\widetilde X_1\right)^2} \right) \\
 &= (M-1) \esp{(\widetilde X_1)^2} = (M-1) \esp{\sigma^2_F(Y)}.
\end{align*}

We prove now~\eqref{eq:thm_final_nested} using again the Lemma~\ref{prop::L_2_norm_unif_integrability} $(b)$ with the convergence in law of $\pa[1]{Z(h)}_{h \in \Hr}$ established in Lemma~\ref{prop::law_convergence}.
\end{proof}

We notice that, if the assumption \eqref{eq::Lp_inequality_nested} in Proposition~\ref{prop::Lp_inequality_nested} holds with $p>2$, the condition of $L^2$--uniform integrability is much easier to show since it is a direct consequence of Lemma \ref{prop::L_2_norm_unif_integrability} $(a)$. 

\subsection{Smooth nested Monte Carlo}
When the function $f $ is smooth, namely ${\cal C}^{1+\rho}(\R,\R)$, $\rho\!\in (0,1]$  ($f'$ is $\rho$-H\"older), a variant of the former multilevel nested estimator has been used in~\cite{BuHaRe15} (see also~\cite{Gi15}) to improve the strong rate of convergence in order to attain the asymptotically unbiased setting $\beta>1$ in the condition~\eqref{lemairestrong_error}. A root $M$ being given, the idea is  to replace in the successive refined levels the difference $Y_{\frac hM}-Y_h$ (where $h= \frac{1}{K}$, $K\!\in K_0\N^*$)  in the ML2R et MLMC estimators by 
\[
Y_{h,\frac hM} :=  f\left(\frac{1}{MK} \sum_{k=1}^{MK} F\big(\xi_{k},Y\big)\right)- \frac 1M \sum_{m=1}^M f\left(\frac 1K \sum_{k=1}^K F\big(\xi_{(m-1)K+k},Y\big)\right) .
\]
It is clear that $\esp{Y_{h,\frac hM} } = \esp{Y_{\frac hM}-Y_{h} }$. Computations similar to those carried out in Proposition~\ref{prop::Lp_inequality_nested} yield that,  if $X = F(\xi,Y)\!\in L^{p(1+\rho)}(\PP)$ for some $p\ge 2$, then
\begin{equation}\label{eq:Y-(h,h/M)}
\big\|Y_{h,\frac hM}\big\|_p^p \le V^{(\rho,p)}_M\Big|h-\frac h M\Big|^{\frac p2(1+\rho)}= V^{(\rho, p)}_M\Big|1-\frac 1 M\Big|^{\frac p2(1+\rho)}|h|^{\frac p2(1+\rho)}.
\end{equation}

\noindent $\rhd$ {\it SLLN}: The first consequence is that the SLLN also holds for these modified estimators along the sequences of RMSE $(\varepsilon_k)_{k\ge 1}$ satisfying $\sum_{k\ge 1}\varepsilon_k^p <+\infty$ owing to Theorem~\ref{th::slln}.

\medskip
\noindent $\rhd$ {\it CLT}: When~\eqref{eq:Y-(h,h/M)}  is satisfied with $p=2$, one derives that  $\beta = \frac p2(1+\rho) = 1+\rho>1$ whatever $\rho$ is. Hence, the only requested condition in this setting to obtain a CLT (see~Theorem~\ref{th::clt_g1}) is the $L^2$--uniform integrability of $\pa[1]{h^{-\frac\beta2}Y_{h,\frac hM}}_{h \in \Hr}$, since no sharp rate is needed when $\beta >1$. Moreover, if~\eqref{eq:Y-(h,h/M)} holds for  a $p\in (2,+\infty)$, $i.e.$ if $X = F(\xi,Y)\!\in L^{p(1+\rho)}(\PP)$ with $p>2$, then $h^{-\frac {\beta}{2}}\big\| Y_{h,\frac hM}\big\|_p \le  {V^{(\rho, p)}_M}^{\frac 1p}\Big|1-\frac 1 M\Big|^{\frac 12(1+\rho)}$ which in turn ensures the  $L^2$--uniform integrability.

As a final remark, note that if the function $f$ {\em is  convex,} $Y_{h,\frac hM} \le 0$ so that $ \esp{Y_{\frac hM} }\le \esp{Y_{h} }$ which in turn implies by an easy induction that $\esp{Y_0}\le \esp{Y_h}$ for every $h\!\in \Hr$. A noticeable consequence is that the MLMC estimator   has a positive bias.

\smallskip 
These  results  can be extended to locally $\rho$-H\" older continuous functions with polynomial growth at infinity. For more details and a complete proof we refer to~\cite{Gi17}. 

\section{Asymptotic of the  weights}
\label{appendix::weights}

We focus our attention on the behaviour of  $\W_j^R$ when $R\to+\infty$.
We recall
$$\W_j^R = \sum_{\ell=j}^R a_\ell b_{R-\ell} = \sum_{\ell=0}^{R-j} a_{R-\ell} b_{\ell}$$
with
$$a_\ell = \frac{1}{\prod_{1\leq k \leq \ell-1} (1- M^{-k\alpha}) }$$
and with the convention $\prod_{k=1}^0 (1- M^{-k\alpha}) = 1$, and 

$$b_{\ell} = (-1)^{\ell} \frac{M^{-\frac\alpha 2 \ell(\ell+1)}}{\prod_{1\leq k \leq \ell} (1- M^{-k\alpha})}.$$
For convenience, we set $\W_j^R = 0$, for $j\geq R+1, R\in \N^*$.
We first notice that $a_\ell$ is an increasing and converging sequence and we set 

$$\lim_{\ell\to +\infty} a_\ell = a_\infty.$$
The sequence $b_\ell$ converges to zero
and furthermore the series with general term $b_\ell$ is absolutely converging, since $\sum_{\ell\geq1} M^{-\frac{\alpha}{2}\ell(\ell+1)} < +\infty$. This leads us to set

$$\widetilde{B}_\infty  = \sum_{\ell=0}^{+\infty} |b_\ell| < +\infty
\quad \mbox{and} \quad 
B_\infty = \sum_{\ell=0}^{+\infty} b_\ell  < +\infty.$$
Claim $(a)$ of Lemma \ref{lemma::W_j_R_limits} is then proved.
As a consequence, 
\begin{equation}
 \forall R \in \N^*, \forall j\in \left\{ 1, \ldots, R\right\}, \qquad \left| \W_j^R\right| \leq a_\infty \widetilde{B}_\infty,
\end{equation}
which proves claim $(b)$ in Lemma \ref{lemma::W_j_R_limits}. For the proof of claims $(c)$ and $(d)$, 
we will need the following

\begin{Lemma}
\label{lemma::W_j_R_convergence}
  Let $\varphi:\N^*\to\N^*$ such that $\varphi(R) \in \{1, \ldots,R-1\}$ for every $R\geq1$, 
  $\varphi(R) \to +\infty$ and $R-\varphi(R) \to+\infty$ as $R\to+\infty$. Then
  $$\lim_{R\to +\infty}\sup_{1\leq j\leq\varphi(R)} \left| \W_j^R - 1  \right| = 0. $$
  In particular, $\forall j \in \N^*, \W_j^R\to1$ as $R\to+\infty$.\\
  However, this convergence is not uniform since $\W_{R-j+1}^R \to a_\infty \sum_{\ell=0}^{j-1} b_\ell$ for every $j\in\N^*$ as $R\to+\infty$.
\end{Lemma}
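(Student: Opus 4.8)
The plan is to work throughout with the representation $\W_j^R = \sum_{\ell=0}^{R-j} a_{R-\ell} b_\ell$ and to exploit that $a_\ell \nearrow a_\infty$ and $\sum_{\ell \ge 0} |b_\ell| = \widetilde B_\infty < +\infty$ (claim $(a)$ of Lemma \ref{lemma::W_j_R_limits}). The starting point is a normalization identity: since $\W_1^R = 1$ for every $R$ by construction, and since for each fixed $\ell$ one has $a_{R-\ell} b_\ell \to a_\infty b_\ell$ as $R \to +\infty$ with the summable domination $|a_{R-\ell} b_\ell| \le a_\infty |b_\ell|$, dominated convergence for series gives
\begin{equation*}
	1 = \lim_{R \to +\infty} \W_1^R = \lim_{R \to +\infty} \sum_{\ell=0}^{R-1} a_{R-\ell} b_\ell = a_\infty \sum_{\ell \ge 0} b_\ell = a_\infty B_\infty .
\end{equation*}
Hence $a_\infty B_\infty = 1$, and it suffices to estimate $\W_j^R - a_\infty B_\infty$ uniformly over $1 \le j \le \varphi(R)$.

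For the uniform bound I would write, for $1 \le j \le \varphi(R)$,
\begin{equation*}
	\W_j^R - 1 = \sum_{\ell=0}^{R-j} (a_{R-\ell} - a_\infty) b_\ell - a_\infty \sum_{\ell = R-j+1}^{+\infty} b_\ell ,
\end{equation*}
and control the two pieces separately. Given $\delta > 0$, first fix $L$ with $\sum_{\ell > L} |b_\ell| < \delta$. In the first sum I split at $\ell = L$: for $\ell \le L$ one has $R - \ell \ge R - L$, so $|a_{R-\ell} - a_\infty| \le a_\infty - a_{R-L}$ and this part is at most $(a_\infty - a_{R-L}) \widetilde B_\infty$, while the contribution of $\ell > L$ is bounded by $a_\infty \sum_{\ell > L} |b_\ell| \le a_\infty \delta$ using $0 \le a_\infty - a_{R-\ell} \le a_\infty$. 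The crucial uniformity comes from the constraint $j \le \varphi(R)$: it forces $R - j \ge R - \varphi(R) \to +\infty$, so the second piece obeys the $j$-free bound $|a_\infty \sum_{\ell = R-j+1}^{+\infty} b_\ell| \le a_\infty \sum_{\ell > R - \varphi(R)} |b_\ell|$, which tends to $0$. Choosing $R$ large enough that $a_\infty - a_{R-L} < \delta$ and $\sum_{\ell > R-\varphi(R)} |b_\ell| < \delta$ yields $\sup_{1 \le j \le \varphi(R)} |\W_j^R - 1| \le (\widetilde B_\infty + 2 a_\infty)\delta$, which proves the claim. The pointwise statement $\W_j^R \to 1$ for fixed $j$ follows either by specializing or, more directly, by the same dominated-convergence argument applied to $\W_j^R = \sum_{\ell=0}^{R-j} a_{R-\ell} b_\ell \to a_\infty B_\infty = 1$.

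Finally, the non-uniformity is exhibited at the opposite end of the index range. For fixed $j$ the representation gives $\W_{R-j+1}^R = \sum_{\ell=0}^{j-1} a_{R-\ell} b_\ell$, a \emph{finite} sum of $j$ terms, whence $\W_{R-j+1}^R \to a_\infty \sum_{\ell=0}^{j-1} b_\ell$ as $R \to +\infty$ since each $a_{R-\ell} \to a_\infty$. As $\sum_{\ell=0}^{j-1} b_\ell \ne B_\infty$ in general (e.g. for $j=1$ one gets $\W_R^R = a_R \to a_\infty > 1$), this limit differs from $1$, so the convergence cannot be uniform up to indices of order $R$. The main subtlety in the argument is precisely ensuring uniformity in the first part: the terms where $a_{R-\ell}$ is not yet close to $a_\infty$ are exactly those with $\ell$ near $R-j$, and they are harmless only because the corresponding $b_\ell$ are then summably small, the truncation being performed uniformly thanks to $R - \varphi(R) \to +\infty$.
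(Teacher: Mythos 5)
Your proof is correct and follows essentially the same route as the paper's: the same decomposition $\W_j^R - a_\infty B_\infty = \sum_{\ell=0}^{R-j}(a_{R-\ell}-a_\infty)b_\ell - a_\infty\sum_{\ell\geq R-j+1} b_\ell$, the same use of $R-\varphi(R)\to+\infty$ to kill the tail, the same normalization $\W_1^R=1 \Rightarrow a_\infty B_\infty = 1$ (which you place at the start rather than the end), and the identical finite-sum argument for the non-uniformity at $\W_{R-j+1}^R$. The only (cosmetic) difference is that you cut the main sum at a fixed index $L$ chosen from the tail of $\sum_\ell |b_\ell|$, whereas the paper cuts it at $\varphi(R)$ --- your variant incidentally shows the hypothesis $\varphi(R)\to+\infty$ is not needed for this part, but both are the same $\varepsilon$-splitting in substance.
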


\begin{proof}
  We write
  
  \begin{align*}
   \left| \W_j^R- a_\infty B_\infty  \right| & = \left| \sum_{\ell=0}^{R-j} a_{R-\ell} b_\ell - a_\infty \sum_{\ell = 0}^{R-j} b_\ell - a_\infty \sum_{\ell \geq R-j+1} b_\ell\right|\\
				            & \leq \sum_{\ell=0}^{R-j} \left( a_\infty - a_{R-\ell} \right)|b_\ell| + a_\infty \sum_{\ell\geq R-j+1} |b_\ell|
  \end{align*}
  First note that
  $$\lim_{R\to+\infty} \sup_{j\in\left\{ 1,\ldots,\varphi(R) \right\}} \sum_{\ell\geq R-j+1} |b_\ell| \leq \lim_{R\to+\infty}\sum_{\ell\geq R-\varphi(R)+1} |b_\ell|= 0,$$
  as $R-\varphi(R) \to +\infty$ and $\sum_{\ell \geq0} |b_\ell| < +\infty$. On the other hand, for every $j\in\left\{ 1, \ldots,\varphi(R)\right\}$,
  
  \begin{align*}
   \sum_{\ell=0}^{R-j} \left( a_\infty - a_{R-\ell} \right)|b_\ell| & =  \sum_{\ell=j}^{R} \left( a_\infty - a_{\ell} \right)|b_{R-\ell}| \\
   & = \sum_{\ell=j}^{\varphi(R)} \left( a_\infty - a_{\ell} \right)|b_{R-\ell}| + \sum_{\ell=\varphi(R)+1}^{R} \left( a_\infty - a_{\ell} \right)|b_{R-\ell}|\\
   & \leq a_\infty  \sum_{\ell=R-\varphi(R)}^{R-j}|b_{\ell}| + \left( a_\infty - a_{\varphi(R)+1}\right) \sum_{\ell=0}^{R-\varphi(R)-1} |b_\ell|\\
   & \leq a_\infty  \sum_{\ell=R-\varphi(R)}^{+\infty}|b_{\ell}| + \left( a_\infty - a_{\varphi(R)+1}\right) \sum_{\ell=0}^{R-\varphi(R)-1} |b_\ell|.
  \end{align*}
  Consequently, $\sup_{j\in \left\{ 1,\ldots,R \right\}} \sum_{\ell=0}^{R-j} \left( a_\infty - a_{R-\ell}\right)|b_\ell| \to 0$ as $R\to+\infty$, 
  since $\varphi(R)$ and $R-\varphi(R) \to+\infty$. Finally,  
  
  $$\lim_{R\to+\infty} \sup_{1\leq j\leq\varphi(R)} \left| \W_j^R - a_\infty B_\infty  \right| = 0.$$
  Moreover, by definition $\W_1^R = 1$ for all $R$, which implies that $B_\infty = \frac 1{a_\infty}$ and completes the proof.
  Finally, as $a_j \to a_\infty$,

  $$\W_{R-j+1}^{R} = \sum_{\ell=0}^{j-1} a_{R-\ell}b_l \xrightarrow{R\to+\infty} a_\infty \sum_{\ell=0}^{j-1} b_\ell.$$

\end{proof}

\begin{proof}[Proof of Lemma \ref{lemma::W_j_R_limits} $(c)$ and $(d)$] ~ 
 \begin{itemize}
  \item[$(c)$] Let us consider the non-negative measure on $\N^*$ defined by $m_\beta(j) = M^{\gamma(j-1)}$, $\gamma<0$. We notice that it is a finite measure since 
    $$ \sum_{j\geq 1} dm_\beta(j) = \frac{1}{1-M^{\gamma}}.$$
    Since, as we saw in Lemma \ref{lemma::W_j_R_convergence}, $\W_j^R \to1$ as $R\to+\infty$ for every $j\in\N^*$ and $|\W_j^R|\leq a_\infty \widetilde{B}_\infty$, 
    we derive from Lebesgue's dominated convergence theorem that
    \begin{equation*}
      \lim_{R\to+\infty} \sum_{j=2}^R \left| \W_j^R \right| M^{\gamma(j-1)} = \sum_{j=2}^{+\infty} \lim_{R\to+\infty} \left| \W_j^R \right| M^{\gamma(j-1)} = \frac{1}{M^{-\gamma}-1}.
    \end{equation*}
    
  \item[$(d)$] $\bullet$ If $\gamma<0$, we consider the non-negative finite measure on $\N^*$ defined by $m'_\beta(j) = M^{\gamma(j-1)} v_j$ since $(v_j)_{j \ge 1}$ is a bounded sequence of positive real numbers. As in the previous case $(c)$ we have  
    \begin{equation*}
      \lim_{R\to+\infty} \sum_{j=2}^R \left| \W_j^R \right| M^{\gamma(j-1)} v_j = \sum_{j=2}^{+\infty} M^{\gamma(j-1)} v_j.
    \end{equation*}

  $\bullet$ If $\gamma=0$, let us consider a sequence $\varphi(R) \in \{1, \ldots,R-1\}$ such that 
  $\frac{\varphi(R)}{R} \to 1$, $R-\varphi(R) \to+\infty$  as $R\to+\infty$ (for example $\varphi(R) = R-\sqrt{R}$). Then we can write
  
  \begin{align*}
  \left| \frac 1 {R} \sum_{j=2}^{R} \left| \W_j^{R}\right| v_j -\frac 1 {R} \sum_{j=2}^{R}  v_j \right| 
  &\leq \left[ \frac 1 R \sum_{j=2}^{\varphi(R)} \left| \left| \W_j^R \right| -1\right| + \frac 1 R \sum_{j=\varphi(R)+1}^R (|\W_j^R|+1)  \right] \sup_{j\geq2} v_j\\
  &\leq \left[ \sup_{2\leq j\leq \varphi(R)} \left|  \W_j^{R} -1\right| \frac{\varphi(R)}{R} + \left( a_\infty \widetilde{B}_\infty  +1 \right) \left( 1 - \frac{\varphi(R)}{R}\right) \right] \sup_{j\geq 2} v_j .
  \end{align*}
  Owing to Lemma \ref{lemma::W_j_R_convergence} $\sup_{2\leq j\leq \varphi(R)} |\W_j^R -1| \to 0$ as $R\to+\infty$. 
  Using furthermore that $\frac{\varphi(R)}{R} \to 1$ as $R\to+\infty$ and that $\lim_{j \to +\infty} v_j = 1$, one concludes by noting that, owing to C\'es\`aro's Lemma, 
  $\ds \lim_{R\to+\infty} \frac 1 {R} \sum_{j=2}^{R} v_j = 1$.

  $\bullet$ If $\gamma>0$, first, we notice that

  \begin{equation}
  \label{eq::to_infty}
   \sum_{j=2}^{R} \left| \W_j^{R}\right| M^{\gamma(j-1)} \geq \left| \W_{R}^{R} \right|  M^{\gamma R} = \left| a_R \right|  M^{\gamma R} \to +\infty.
  \end{equation}
  Let $\eta>0$. Since $\lim_{j\to+\infty}v_j=1$, there exists $N_\eta\in\N^*$ such that, for each $j>N_\eta$, 
  $\left| v_j-1\right|<\frac{\eta}{2}$. Owing to Lemma \ref{lemma::W_j_R_convergence} there exists $R_\eta$ such that,
  for each $R \geq R_\eta$, $\sup_{2\leq j\leq N_\eta} \left| \W_j^{R}\right|< 1 +\eta$. Then, 

  \begin{align*}
  \left| \frac{\sum_{j=2}^{R} \left| \W_j^{R}\right| M^{\gamma(j-1)} v_j}{\sum_{j=2}^{R} \left| \W_j^{R}\right| M^{\gamma(j-1)}} -1 \right| &\leq \frac{\sum_{j=2}^{R} \left| \W_j^{R}\right| M^{\gamma(j-1)} \left|v_j-1\right|}{\sum_{j=2}^{R} \left| \W_j^{R}\right| M^{\gamma(j-1)}} \\
  &\leq \frac{\sum_{j=2}^{N_\eta} \left| \W_j^{R}\right| M^{\gamma(j-1)} \left|v_j-1\right|}{\sum_{j=2}^{R} \left| \W_j^{R}\right| M^{\gamma(j-1)}} + \frac \eta 2 \frac{\sum_{j=N_\eta+1}^{R} \left| \W_j^{R}\right| M^{\gamma(j-1)} }{\sum_{j=2}^{R} \left| \W_j^{R}\right| M^{\gamma(j-1)}} \\
  &\leq \frac{\max_{2\leq j \leq N_\eta} M^{\gamma(j-1)} \left|v_j-1\right| N_\eta \sup_{2\leq j \leq N_\eta} \left| \W_j^{R}\right| }{\sum_{j=2}^{R} \left| \W_j^{R}\right| M^{\gamma(j-1)}} + \frac \eta 2\\
  &\leq \frac{f(N_\eta) (1+\eta)}{\sum_{j=2}^{R} \left| \W_j^{R}\right| M^{\gamma(j-1)}} + \frac \eta 2
  \end{align*}
  where $f(N) = \max_{2\leq j \leq N} M^{\gamma(j-1)} \left|v_j-1\right| N$ does not depend on $R$.
  Thanks to \eqref{eq::to_infty}, there exists $R'_\eta>0$ such that, for each $R \geq \max(R_\eta, R'_\eta)$,  
  $\sum_{j=2}^{R} \left| \W_j^{R}\right| M^{\gamma(j-1)}> \frac{2f(N_\eta)(1+\eta)}{\eta}$, which proves that 
  
  $$\lim_{R\to+\infty} \frac{\sum_{j=2}^{R} \left| \W_j^{R}\right| M^{\gamma(j-1)} v_j}{\sum_{j=2}^{R} \left| \W_j^{R}\right| M^{\gamma(j-1)}} =1.$$
  This leads to analyze 
  
  \begin{align*}
  \frac{1}{M^{\gamma R}} \sum_{j=2}^{R} \left| \W_j^{R} \right| M^{\gamma(j-1)} = \sum_{j=2}^{R} \left| \W_j^{R} \right| M^{-\gamma(R-j+1)} = \sum_{j=1}^{R-1} \left| \W_{R-j+1}^{R} \right| M^{-\gamma j}.
  \end{align*}
  Using that $\left| \W_j^{R} \right| \leq a_\infty \widetilde{B}_\infty $ for $j\in \left\{ 1, \ldots, R\right\}$ 
  and Lemma \ref{lemma::W_j_R_convergence} one derives from Lebesgue's dominated convergence theorem that 

  $$\sum_{j=1}^{R-1} \left| \W_{R-j+1}^{R} \right| M^{-\gamma j} \xrightarrow{R\to+\infty} a_\infty \sum_{j\geq 1} \left| \sum_{\ell=0}^{j-1} b_\ell \right| M^{-\gamma j}<+\infty$$
  since $\left| \sum_{\ell=0}^{j-1} b_\ell \right| \leq \sum_{\ell=0}^{j-1} \left| b_\ell \right| \leq \widetilde{B}_\infty$.
  
 \end{itemize}
 
\end{proof}

\section{Additional computations for Proposition \ref{prop::L_p_convergence} in the MLMC case}
\label{appendix::slln_mlmc_computations}
Here below we give the computations needed to prove inequality \eqref{eq::mlmc_ssln_inequality} in the proof of Proposition \ref{prop::L_p_convergence} for the MLMC estimator.

  \begin{itemize}
	  \item[$\bullet$] For  $p<\beta+1$ (so that $\beta >1$): As we did with the ML2R estimator,
		$$\frac{1}{N(\varepsilon)^{p-1}} \sum_{j=2}^{R(\varepsilon)}  M^{\frac{p - (\beta+1)}{2}(j-1)} \leq K_2 \varepsilon^{2(p-1)} \leq K_2 \varepsilon^p$$
		with $K_2 = K_2(M,\beta,p) = 1/\left( 1-M^{\frac{p-(\beta+1)}{2}}\right)  (2/C_\beta)^{p-1}$, since $p\geq 2$.
		  
	  \item[$\bullet$] For $p=\beta+1$: 
		  
		$$\frac{1}{N(\varepsilon)^{p-1}} \sum_{j=2}^{R(\varepsilon)}  M^{\frac{p - (\beta+1)}{2}(j-1)} \leq \frac{R(\varepsilon)}{N(\varepsilon)^{p-1}} .$$
		  
		Since $p\geq2$, we have $\beta\geq1$. The case $\beta=1$ leads to $p=2$, which is a consequence of \eqref{eq::L2_convergence}.
		If $\beta >1$, we derive from \eqref{eq::R_MLMC},
		  
		$$\frac{R(\varepsilon)}{N(\varepsilon)^{p-1}} \leq \left(C_R^{(1)}+1\right) \left( \frac 2 {C_\beta}\right)^{p-1} \varepsilon^{2(p-1)} + \frac 1 {\alpha \log(M)} \left( \frac 2 {C_\beta}\right)^{p-1} \varepsilon^{2(p-1)} \log\left( \frac 1 \varepsilon\right).$$
		Since $p>2$, we have $0< \varepsilon^{2(p-1)} \leq \varepsilon^{2(p-1)} \log\left( \frac 1\varepsilon\right) \leq \varepsilon^p \sup_{\varepsilon\in(0,1\wedge\bar\varepsilon]} \left( \varepsilon^{p-2} \log\left(\frac 1 \varepsilon\right) \right) $. 
		Then
		$$\frac{R(\varepsilon)}{N(\varepsilon)^{p-1}} \leq K_2 \varepsilon^p$$
		with $K_2 = K_2(M,\beta,p) = \left( \frac 2 {C_\beta}\right)^{p-1}  \left(C_R^{(1)} + 1 + \frac{1}{\alpha\log(M)}\right) \sup_{\varepsilon\in(0,1\wedge\bar\varepsilon]} \left( \varepsilon^{p-2} \log\left(\frac 1 \varepsilon\right) \right)$.
		  
	  \item[$\bullet$] For $p > \beta + 1$: As $p-(\beta+1) >0$, one has
		  
		$$\frac{1}{N(\varepsilon)^{p-1}} \sum_{j=2}^{R(\varepsilon)}  M^{\frac{p - (\beta+1)}{2}(j-1)} \leq \frac{1}{M^{\frac{p-(\beta +1)}{2}}-1} \frac{M^{\frac{p - (\beta+1)}{2}R(\varepsilon)}}{N(\varepsilon)^{p-1}}. $$
		We derive from \eqref{eq::R_MLMC},
		$$M^{\frac{p - (\beta+1)}{2}R(\varepsilon)} \leq M^{\frac{p - (\beta+1)}{2}\left( C_R^{(1)} +1 +\frac{1}{\alpha \log(M)} \log\left( \frac1\varepsilon\right)\right)}
		=M^{\frac{p - (\beta+1)}{2}\left( C_R^{(1)} +1\right)} \varepsilon^{ -\frac{p - (\beta+1)}{2\alpha } }.$$
		On the other hand, owing to \eqref{eq::1_on_N},
		  
		\begin{equation*}
		  \frac{1}{N(\varepsilon)^{p-1}}                                                                                      \leq \left(\frac{2}{C_\beta} \right)^{p-1} \varepsilon^{2(p-1)}  
		  \begin{cases}
		  1                                                                                                                  & \mbox{for } \beta\geq 1,                                        \\
		  M^{-\frac{1-\beta}{2}C_R^{(1)}(p-1)} \varepsilon^{-\frac{1-\beta}{2\alpha}(p-1)}                                   & \mbox{for } \beta<1.                                            
		  \end{cases}
		\end{equation*}
		Collecting these estimates finally yields
		$$\frac{M^{\frac{p - (\beta+1)}{2}R(\varepsilon)}}{N(\varepsilon)^{p-1}} 
		\leq A(\beta, M, p)\varepsilon^{2(p-1)}
		\begin{cases}
		  \varepsilon^{ -\frac{p - (\beta+1)}{2\alpha } }           & \mbox{for } \beta\geq 1, \\
		  \varepsilon^{-\frac\beta\alpha \left( \frac p2 -1\right)} & \mbox{for } \beta<1     
		\end{cases}
		$$
		with
		$
		A(\beta, M, p) = M^{\frac{p - (\beta+1)}{2}\left( C_R^{(1)} +1\right)} \left(\frac{2}{C_\beta} \right)^{p-1}
		\begin{cases}
		  1                                    & \mbox{for } \beta\geq 1, \\
		  M^{-\frac{1-\beta}{2}C_R^{(1)}(p-1)} & \mbox{for } \beta<1.     
		\end{cases}
		$
		\begin{itemize}
		  \item[-] For $\beta\geq1$, note that 
			$$2(p-1)-\frac{p-(\beta-1)}{2\alpha} = p+\frac{(p-2)(2\alpha-1)+\beta-1}{2\alpha}\geq p$$
			since $p\geq2$ and $2\alpha-1\geq0$, since $2\alpha\geq\beta\geq1$. 
		  \item[-] For $\beta<1$, note that
			$$2(p-1)-\frac\beta\alpha\left( \frac p2 -1\right) = p + \frac{(p-2)(\alpha-\beta/2)}{\alpha}\geq p$$
			since $p\geq2$ and $\alpha\geq\frac \beta2$.
		\end{itemize}
		Finally $\displaystyle \frac{1}{N(\varepsilon)^{p-1}} \sum_{j=2}^{R(\varepsilon)}  M^{\frac{p - (\beta+1)}{2}(j-1)} \leq K_2 \varepsilon^p$
		with $\displaystyle K_2 = K_2(M,\beta,p) = \frac{1}{M^{\frac{p-(\beta +1)}{2}}-1} A(\beta, M, p)$.
  \end{itemize}

\printbibliography

\end{document}